\newtheorem{prop}{Proposition}[section]
\newtheorem{thm}[prop]{Theorem}
\newtheorem{cor}[prop]{Corollary}
\newtheorem{conj}[prop]{Conjecture}
\newtheorem{lem}[prop]{Lemma}
\theoremstyle{definition}
\newtheorem{defn}[prop]{Definition}
\newtheorem{expl}[prop]{Example}
\newtheorem{rem}[prop]{\it Remark}
\newtheorem{emp}[prop]{}
\numberwithin{equation}{section}
\newcommand{\bP}{\mathbb{P}}
\newcommand{\bC}{\mathbb{C}}
\newcommand{\bZ}{\mathbb{Z}}
\newcommand{\bF}{\mathbb{F}}
\newcommand{\bD}{\mathbb{D}}
\newcommand{\bQ}{\mathbb{Q}}
\newcommand{\cY}{\mathcal{Y}}
\newcommand{\cC}{\mathcal{C}}
\newcommand{\cO}{\mathcal{O}}
\newcommand{\cS}{\mathcal{S}}
\newcommand{\cA}{\mathcal{A}}
\newcommand{\cD}{\mathcal{D}}
\newcommand{\Supp}{\mathrm{Supp}}
\newcommand{\Sing}{\mathrm{Sing}}
\newcommand{\Bl}{\mathrm{Bl}}
\newcommand{\wt}{\mathrm{wt}}
\newcommand{\rdeg}{\mathrm{rdeg}}
\def\namedlabel#1#2{\begingroup
   \def\@currentlabel{#2}%
   \label{#1}\endgroup
}
\begin{document}

\title{Construction of hyperbolic Horikawa surfaces}
\author{Yuchen Liu}
\address{Department of Mathematics, Princeton University,
Princeton, NJ, 08544-1000, USA.}
\email{yuchenl@math.princeton.edu}
\date{\today}

\begin{abstract}
 We construct a Brody hyperbolic Horikawa surface that is a double cover of $\bP^2$ branched along a smooth curve of degree $10$. We also construct Brody hyperbolic double covers of Hirzebruch surfaces with branch loci of the lowest possible bidegree.
\end{abstract}

\maketitle

\section{Introduction}

A complex algebraic variety $X$ is said to be \textit{Brody hyperbolic}
if there are no non-constant holomorphic maps from $\bC$ to $X$. Thanks to Brody Lemma \cite{bro78}, we know that a proper Brody hyperbolic variety is \emph{Kobayashi hyperbolic}, i.e. its Kobayashi pseudometric is non-degenerate.  In \cite{lan86},
Lang conjectured that a complex projective variety $X$ is
Brody hyperbolic if every subvariety of $X$ is of general type.
More generally, Green, Griffiths \cite{gg79} and Lang \cite{lan86}
proposed the following conjecture:

\begin{conj}[Green-Griffiths-Lang]\label{ggl}
 If a complex projective variety $X$ is of general type,
 then there exists a proper Zariski closed subset $Z\subsetneq X$
 such that any non-constant holomorphic map $f:\bC\to X$ will satisfy
 $f(\bC)\subset Z$.
\end{conj}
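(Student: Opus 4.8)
The statement is the Green--Griffiths--Lang conjecture, which remains a major open problem; what follows is the standard line of attack via jet differentials, together with a frank indication of exactly where it stalls.

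The plan is to attack $X$ of general type and dimension $n$ through its canonical bundle $K_X$, which is big by hypothesis. First I would introduce the bundles of holomorphic jet differentials: the Green--Griffiths bundle $E_{k,m}^{GG}T_X^*$ of order $k$ and weighted degree $m$, and Demailly's subbundle $E_{k,m}T_X^*$ of jet differentials invariant under reparametrization of the source. The foundational input is the \emph{fundamental vanishing theorem} of Green--Griffiths, Siu--Yeung and Demailly: every global section $P$ of $E_{k,m}T_X^*\otimes A^{-1}$, with $A$ ample, is identically annihilated along any entire curve, i.e. $P(j_kf)\equiv 0$ for every holomorphic $f\colon\bC\to X$. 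Consequently, if such sections generate the sheaf $E_{k,m}T_X^*\otimes A^{-1}$ outside a proper Zariski closed subset, the image of every non-constant $f$ is forced into that subset, which is precisely the desired conclusion.

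The second step is to manufacture enough such sections. Existence of a single nonzero section for $m\gg k\gg 1$ follows, using only bigness of $K_X$, from Demailly's holomorphic Morse inequalities applied to $E_{k,m}^{GG}T_X^*$ once the jet order $k$ is taken large (the Diverio--Merker--Rousseau mechanism). To upgrade from one section to generation off a proper subvariety, I would twist by increasingly negative ample line bundles and take the intersection, over all $m$ and all ample $A$, of the base loci $\Bs\bigl(E_{k,m}T_X^*\otimes A^{-1}\bigr)$; Noetherianity makes this intersection well-defined, and it is the candidate for the exceptional set $Z$. One then has to show this candidate is a \emph{proper} subvariety of $X$.

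That last step is the crux, and is the reason the conjecture is still open. The Morse-inequality construction is purely numerical — an Euler-characteristic estimate — and gives no control over the zero locus of the sections it produces. Showing the base locus is proper requires either an effective lower bound on $h^0$ strong enough to force generation (Demailly's probabilistic estimates on Schur functors of $T_X^*$, which currently suffice only under extra positivity or genericity, e.g. for generic hypersurfaces of large degree, or for surfaces with $c_1^2>c_2$), or a direct geometric construction of jet differentials in the spirit of Clemens--Ein--Voisin--Paun, again available only in special cases. Absent such control — or some genuinely new idea that bypasses jet differentials altogether — the proper-base-locus step cannot be completed in full generality, so this outline does not, and no currently known argument does, establish the statement as worded; the present paper accordingly treats only explicit examples rather than the conjecture itself.
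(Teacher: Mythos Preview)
Your assessment is entirely correct: the statement is labeled as a \emph{conjecture} in the paper and is not proved there; the paper explicitly remarks that ``even in the case of surfaces, these conjectures are still open'' and proceeds to construct explicit Brody hyperbolic Horikawa surfaces as supporting evidence rather than to attack Conjecture~\ref{ggl} itself. So there is no ``paper's own proof'' to compare against, and your honest indication that the jet-differential program stalls at the proper-base-locus step is the right thing to say.
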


It is easy to see that Lang's conjecture follows from the Green-Griffiths-Lang
conjecture by a Noetherian induction argument. Even in the case of
surfaces, these conjectures are still open.
Based on works of Bogomolov \cite{bog77} and Lu-Yau \cite{ly90},
McQuillan \cite{mcq98} showed that Conjecture \ref{ggl} is
true for minimal surfaces of general type with $c_1^2>c_2$.
Demailly and El Goul \cite{deg00} proved Conjecture \ref{ggl} for some surfaces
with $13c_1^2>9c_2$. In principle, minimal surfaces of general type
with minimal $s_2=c_1^2-c_2$ should be the most difficult case for these
conjectures. For example, a very general quintic surface in $\bP^3$ ($c_1^2=5$, $c_2=55$)
does not contain any rational or elliptic curve by a result of 
Xu \cite{xu94}, but we do not have a single example of 
quintic surface that is Brody hyperbolic.

Recall that the Chern numbers of minimal surfaces of general type 
satisfy the Noether inequality $c_2\leq 5c_1^2+36$.
In the extreme case, a surface that reaches the equality $c_2=5c_1^2+36$
if $c_1^2$ is even and $c_2=5c_1^2+30$ otherwise is called a \textit{Horikawa
surface}. A Horikawa surface with even $c_1^2$
is classified to be either a double cover of $\bP^2$ or of
a Hirzebruch surface (see \cite{hor76}). For instance, a double cover of $\bP^2$
branched along a smooth curve of degree $10$ is Horikawa.
Using orbifold techniques, Roulleau and Rousseau \cite{rr13}
showed that a very general
member of this class of Horikawa surfaces is algebraic 
hyperbolic (in particular it has no rational or 
elliptic curve). Hence a very general
member of this class of surfaces is expected to be
Brody hyperbolic according to Conjecture \ref{ggl}.

Our first main result shows that there exists 
a Horikawa surface in this class that is Brody hyperbolic.
This gives an analytic generalization of Roulleau-Rousseau's result (in particular implies \cite[Theorem 3.2]{rr13}) and also provides evidence supporting Conjecture \ref{ggl}. 

\begin{thm}\label{main}
 Let $d$ be an even integer. Then there exists a smooth plane 
 curve $D$ of degree
 $d$ such that the double cover of $\bP^2$ branched along $D$ is Brody hyperbolic
 if and only if $d\geq 10$.
\end{thm}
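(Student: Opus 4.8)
The "only if" direction is the easy part: if $d \le 8$, then the canonical bundle of the double cover $X \to \bP^2$ branched along a smooth degree-$d$ curve is $K_X = \pi^*\cO_{\bP^2}(\tfrac{d-6}{2})$, which is negative (for $d \le 4$, where $X$ is a del Pezzo or $K3$/rational-type surface) or trivial (for $d=6$, a $K3$ surface) or not of general type; in each of these cases $X$ is covered by rational or elliptic curves, or is itself $K3$, hence carries non-constant entire curves and fails to be Brody hyperbolic. So I would dispatch $d \le 8$ quickly and then concentrate all the work on producing, for each even $d \ge 10$, a single smooth $D$ of degree $d$ with $X$ Brody hyperbolic.

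For the substantive direction, the strategy is to build $D$ by a controlled degeneration/deformation argument. First I would construct a highly singular but combinatorially explicit degree-$d$ curve $D_0$ — for instance a union of lines and low-degree curves in general position, or a curve with a prescribed configuration of high-multiplicity singular points — whose double cover $X_0$ has enough "hyperbolicity" built in along the ramification and along the preimages of the singular strata. The key idea is that entire curves $f : \bC \to X$ project to entire curves in $\bP^2$ whose behavior is constrained by how they meet $D$: a holomorphic curve in $\bP^2$ that avoids (or is suitably tangent to) a sufficiently rich arrangement must be constant, by the logarithmic/Borel-type theorems (e.g. the complement of a generic enough union of curves of total degree large compared to $3$ is Brody hyperbolic, and more refined statements using the second main theorem). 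One then wants to transfer this to $X$ using the double-cover structure: an entire curve on $X$ either maps onto an entire curve meeting $D$ with even intersection multiplicities everywhere (the "lifted" locus), or maps into the ramification curve $R \cong D$, which we arrange to be hyperbolic (a smooth plane curve of degree $\ge 4$ is hyperbolic generically, and $d \ge 10 > 4$). The hard part is handling the first case: ruling out entire curves in $\bP^2$ that meet $D$ only with even multiplicity. This is exactly where $d \ge 10$, i.e. $K_X$ effective and big with $K_X = \pi^*\cO(\tfrac{d-6}{2})$ for $d - 6 \ge 4$, should be used, via a jet-differential or a Nevanlinna second-main-theorem estimate with truncation level $2$ on the divisor $D$.

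Concretely, I expect the technical heart to be a deformation argument: start from $D_0$ chosen so that $X_0$ is a union of pieces each manifestly hyperbolic or each equipped with a global jet differential vanishing on an ample divisor, then deform $D_0$ to a smooth $D$ and show the hyperbolicity is preserved. Preservation is delicate because Brody hyperbolicity is not open in general — a limit of non-hyperbolic surfaces can be hyperbolic but not conversely — so instead I would argue in the other direction: exhibit on the smooth $X$ a global symmetric or Green–Griffiths jet differential with values in a negative line bundle (using $K_X$ big, $\tfrac{d-6}{2} \ge 2$, to produce sections by Riemann–Roch on jet spaces as in the Demailly–El Goul framework) forcing every entire curve into a proper subvariety $Z \subset X$, and then separately show that $Z$ — which will be contained in $R \cup \pi^{-1}(\text{flex/bitangent locus of } D)$ — is hyperbolic by direct inspection, choosing $D$ generically within a family so that these auxiliary curves are smooth of high genus. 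The main obstacle, and the place where the construction must be genuinely clever rather than generic, is getting the jet differential to exist with a base locus that one can actually control and prove hyperbolic; the borderline Chern numbers of a Horikawa surface mean generic jet-differential existence theorems do not apply, so one needs a special $D$ with extra symmetry (a large automorphism group, or an invariant pencil) making the jet cohomology computable. I would therefore search for $D$ inside a one-parameter family with a group action — plausibly related to the $\bZ/5$ or dihedral symmetry visible in the degree-$10$ case — reducing the jet computation to an equivariant one on the quotient.
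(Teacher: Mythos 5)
Your ``only if'' direction already has an error at $d=8$: the double cover $Y$ of $\bP^2$ branched along a smooth octic has $K_Y=\pi^*\cO_{\bP^2}(1)$, so it is a \emph{minimal surface of general type} (a Horikawa surface with $c_1^2=2$), not a surface that is ``not of general type'' or covered by rational or elliptic curves. The correct argument, and the one the paper uses, is geometric: after an arbitrarily small deformation of $D$ (harmless because Brody hyperbolicity of compact surfaces \emph{is} open in the Euclidean topology) one finds a bitangent line $\ell$ meeting $D$ transversally in four further points, and then $\pi^{-1}(\ell)$ is a double cover of $\bP^1$ branched at four points, i.e.\ an elliptic curve, so $Y$ is not Brody hyperbolic. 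Your blanket statement for $d\le 8$ does not cover this case.

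For the ``if'' direction your plan is not a proof. You correctly identify that the curve cannot be produced by the generic jet-differential machinery (the Horikawa Chern numbers $c_2=5c_1^2+36$ are as far as possible from the $c_1^2>c_2$ or $13c_1^2>9c_2$ range where Bogomolov/McQuillan/Demailly--El Goul techniques apply), but the proposed remedy --- finding a symmetric $D$ and doing an equivariant jet computation whose base locus can be controlled --- is pure speculation with no mechanism behind it; nothing in the proposal makes the section of a jet bundle actually exist. Moreover, your reason for rejecting the degeneration route is backwards: Brody hyperbolicity is open for compact targets (Brody's theorem), and, more to the point, the paper's degeneration argument never needs the central fiber to be hyperbolic. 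The paper degenerates the branch curve $D$ to the non-reduced curve $2C$, so the double cover degenerates to two copies of $\bP^2$ glued along $C$; a Brody reparametrized limit of entire curves in nearby fibers must, by stability of intersections (Hurwitz), land either in (a copy of) $C$ or in $\bP^2\setminus(C\setminus S)$, and it is these \emph{strata} that are shown to be hyperbolic. The latter hyperbolicity is itself obtained by two further degenerations (of $C$ to a line arrangement, handled by Zaidenberg's admissible-deformation method, and of $D$ via the Zaidenberg--Duval/Huynh construction of $S$), together with the classical hyperbolicity of complements of five general lines. None of this appears in your outline, and without either that degeneration scheme or an actual jet differential, the existence of a hyperbolic double cover for $d\ge 10$ remains unproved.
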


We remark that here that some Brody hyperbolic
double covers of $\bP^2$ have been constructed in 
\cite[Theorem 5]{liu16} with branch loci of minimal 
degree $30$.

For an integer $N\geq 0$, let $\bF_N$ be the $N$-th Hirzebruch surface.
The surface $\bF_N$ has a natural fibration $\bF_N\to \bP^1$. Denote
by $F$ a fiber, and by $T$ a section of the fibration such that
$(T^2)=N$. Any divisor $D$ on $\bF_N$ is linearly equivalent
to $aF+bT$ for integers $a$ and $b$, and we say that $D$ is of bidegree $(a,b)$.

In \cite{rr13}, Roulleau and Rousseau also showed that a very general
Horikawa surface that is a double cover of $\bF_N$ branched along a curve of bidegree $(6,6)$ does not contain
a rational curve. In general it will contain an elliptic curve, so it
cannot be Brody hyperbolic. In the next theorem, we construct
smooth curves of the lowest possible bidegrees in $\bF_N$
along which the double covers of $\bF_N$ are Brody hyperbolic.

\begin{thm}\label{main2}
 Let $a,b$ be even integers.
 Then there exists a smooth curve $D\subset\bF_N$ in the 
 linear system $|aF+bT|$ such that 
 the double cover of $\bF_N$ branched along $D$ is Brody hyperbolic if and only if
 one of the following is true:
 \begin{itemize}
  \item $N=0$ and $a,b\geq 8$;
  \item $N\geq 1$, $a\geq 6$ and $b\geq 8$.
 \end{itemize}
\end{thm}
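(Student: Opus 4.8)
The plan for the ``only if'' direction is to produce a non-constant entire curve in $X$ whenever $(a,b)$ violates the stated bounds, using the ruling of $\bF_N$. Pulling back the projection $\bF_N\to\bP^1$ gives a fibration $g\colon X\to\bP^1$ whose general fibre is the double cover of a line $F\cong\bP^1$ branched along the $b$ points $D\cap F$, hence a hyperelliptic curve of genus $\tfrac b2-1$; likewise $\pi^{-1}(C_0)$, where $C_0\sim T-NF$ is the negative section (for $N=0$ this is just a fibre of the second ruling, since $C_0\sim T$), is a double cover of $C_0\cong\bP^1$ branched along the $D\cdot C_0=a$ points $D\cap C_0$, a hyperelliptic curve of genus $\tfrac a2-1$. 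If $a\leq 4$ then $\pi^{-1}(C_0)$ has geometric genus $\leq 1$, so its normalization is $\bP^1$ or an elliptic curve; composing a uniformization $\bC\to\widetilde{\pi^{-1}(C_0)}$ with the normalization and the inclusion into $X$ gives a non-constant holomorphic map $\bC\to X$, so $X$ is not Brody hyperbolic. The same argument with $g$ handles $b\leq 4$, and $a$ or $b\leq 2$ produces outright rational curves. The delicate case is $b=6$ (for $N=0$, also $a=6$ by symmetry of the two rulings): here $g$ is a genus-$2$ fibration over $\bP^1$, and if it were smooth the relative Jacobian would be trivial over the simply connected base and an isotriviality/Torelli argument would force $X\cong C\times\bP^1$, contradicting that $K_X$ is ample once $a$ is large enough to escape the previous case; hence $g$ has a singular fibre, and a singular fibre of arithmetic genus $2$ always has a component whose normalization is rational or elliptic, again giving an entire curve in $X$. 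This yields precisely the necessary conditions.

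For the ``if'' direction I would adapt the construction underlying Theorem~\ref{main}. The target is, for each admissible $(a,b)$, a smooth $D\in|aF+bT|$ with $X$ Brody hyperbolic, and the device I expect to use is a one-parameter degeneration of $D$ to a distinguished branch curve $D_0$ having at worst nodes, for which the corresponding double cover $\cX_0$---a compact surface with only $A_1$ singularities, and the special fibre of a family $\cX\to\Delta$ whose total space is smooth (locally $w^2=xy+t$ near each node)---admits no non-constant holomorphic map from $\bC$. Granting that, Brody's reparametrization lemma shows the non-hyperbolic locus of the family is closed, so the double cover over a general deformation $D$ of $D_0$ inside $|aF+bT|$, which is smooth and irreducible, is Brody hyperbolic. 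The numerical thresholds reappear here exactly as above: $b\geq 8$ (respectively $a\geq 6$, or $a,b\geq 8$ when $N=0$) is what makes the fibres of $\cX_0\to\bP^1$ and the preimage of $C_0$ remain hyperbolic curves even after acquiring the nodes forced by generic tangency of $D_0$ with the ruling and with $C_0$, since one simple tangency lowers the geometric genus of such a curve by exactly one; this takes care of all \emph{vertical} entire curves once $D_0$ is chosen with only simple tangencies.

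The main obstacle---and the point where $D_0$ must be chosen with genuine care, just as in the plane case---is ruling out the \emph{horizontal} entire curves, namely those mapping non-constantly to $\bP^1$ under $g$. Since $\bP^1$ is not hyperbolic there is no fibrewise argument; instead $D_0$ must be arranged so that the branched orbifold $\bigl(\bF_N,\tfrac12 D_0\bigr)$ carries enough positivity transverse to the ruling. This is the orbifold positivity that Roulleau-Rousseau exploited to obtain algebraic hyperbolicity of a very general member; the work here is to pin down one explicit $D_0$ for which this transverse positivity is strong enough to kill \emph{analytic} entire curves and survives small deformations of the double cover. I expect this to be most delicate at the borderline bidegrees $(6,8)$ for $N\geq 1$ and $(8,8)$ for $N=0$, where the transverse positivity is weakest; for larger $a,b$ the vertical fibres have higher genus and there is more room, so the same method should apply with room to spare. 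Hence the heart of the matter is these borderline cases, after which Theorem~\ref{main2} follows.
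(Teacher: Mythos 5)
Your ``only if'' direction is essentially sound, though heavier than necessary at the borderline case. For $a\leq 4$ or $b\leq 4$ your genus count on $\pi^{-1}(C_0)$ and on the fibres matches the paper. For $b=6$ (and $a=6$ when $N=0$) you invoke isotriviality of smooth genus-$2$ fibrations over $\bP^1$ plus the structure of singular genus-$2$ fibres; this can be made to work (either way $Y$ contains a rational or elliptic curve), but the paper's argument is much simpler: since $\dim|F|=1$ and $(D\cdot F)=b\leq 6$, some member $F_0\in|F|$ is tangent to $D$, so $\pi^{-1}(F_0)$ is a double cover of $\bP^1$ branched in a non-reduced divisor of degree $\leq 6$, hence all of its components are rational or elliptic. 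You do not need Parshin--Arakelov or any Torelli-type input, and the detour through ampleness of $K_X$ is unnecessary.

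The ``if'' direction, however, has a genuine gap, and it sits exactly where you admit the work remains. Your plan degenerates $D$ to a \emph{nodal} curve $D_0$ and requires the resulting $A_1$-singular double cover $\cX_0$ to be Brody hyperbolic, with vertical entire curves excluded by fibrewise genus counts and horizontal ones by ``orbifold transverse positivity'' in the spirit of \cite{rr13}. But orbifold positivity of $(\bF_N,\tfrac12 D_0)$ is only known to control \emph{algebraic} curves; upgrading it to exclude transcendental entire curves dominating $\bP^1$ is precisely the open analytic problem, and no explicit $D_0$ is produced for which this could be verified. So the central fibre of your family is required to be hyperbolic, which is as hard as the theorem itself. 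The paper avoids this entirely by degenerating the branch locus to the \emph{non-reduced} divisor $2C$, where $C$ is a smooth curve of bidegree $(a/2,b/2)$ (this is where evenness of $a,b$ enters): the double cover then degenerates to two copies of $\bF_N$ glued along $C$, and Lemma \ref{finaldef}, via stability of intersections, reduces hyperbolicity of nearby smooth covers to two statements --- $C$ is hyperbolic (a genus count using $a\geq 6$, $b\geq 8$, resp.\ $a,b\geq 8$), and $\bF_N\setminus(C\setminus S)$ is Brody hyperbolic for a suitable smooth $S\in|aF+bT|$. The latter is the real content and is obtained by the Zaidenberg--Duval degenerations of $C$ and $S$ to arrangements of fibres and sections (Corollaries \ref{huy_fn} and \ref{zai_fn}). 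Note that in the paper's degeneration the central fibre is \emph{not} hyperbolic (it contains copies of $\bF_N$), which is why the refined stability-of-intersections argument is needed in place of the naive openness of hyperbolicity you appeal to; your proposal instead demands a hyperbolic singular central fibre and supplies no mechanism to produce one.
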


The ``only if'' parts of Theorem \ref{main} and \ref{main2} are somewhat easy which follow by showing the existence of a rational or elliptic curve on the double cover when the branch locus has a smaller (bi)degree.

Our strategy to prove the ``if'' parts of Theorem 
\ref{main} and \ref{main2} is by using a 
degeneration process consisting of three steps.
Denote by $X$ the base surface $\bP^2$ or $\bF_N$.
In step 1, we degenerate the branch locus $D$ to 
a non-reduced double curve $2C$ where $C$ is smooth.
As a result, the double cover degenerates to a 
union of two copies of $X$ glued along $C$. Using
stability of intersections of entire curves, it
suffices to show that both $X\setminus(C\setminus D)$
and $C$ are Brody hyperbolic. In step 2, we 
degenerate $C$ into a line arrangement $\cup_i C_i$. By a variant of Zaidenberg's method \cite{zai89}, it suffices to show that $X\setminus((\cup_i C_i)\setminus D)$ is Brody hyperbolic. By classical results, we know that for $X=\bP^2$ or $\bF_N$, $X\setminus(\cup_i C_i)$ is Brody hyperbolic. In step 3, we apply Zaidenberg-Duval's method \cite{zai89, sz02, duv04, duv} of degenerating $D$ into line arrangements in order to deduce  hyperbolicity of $X\setminus((\cup_i C_i)\setminus D)$ from hyperbolicity of $X\setminus(\cup_i C_i)$ which is known by classical results. 
\smallskip

\emph{Historical remark.} Note that Duval \cite{duv04} constructed a Brody hyperbolic sextic surface in $\bP^3$ by nicely adopting Zaidenberg's method \cite{zai89}, together with the hyperbolic non-percolation introduced in \cite{sz02}.
In this paper, we follow precisely Duval's approach \cite{duv04} which was further developed in \cite{huy16}. Thus we will use the term \emph{Zaidenberg-Duval's method} for this approach in our presentation.
\medskip

The paper is organized as follows. In Section \ref{duval},
we recall Zaidenberg-Duval's method \cite{zai89, sz02, duv04,  duv} in constructing a smooth curve $D$ satisfying the hyperbolicity of $X\setminus((\cup_i C_i)\setminus D)$.
We recall results in \cite[Section 4]{huy16} in the surface case in Lemma \ref{huynh}, and we apply this lemma to $\bP^2$ and $\bF_N$ in Corollary \ref{huy_p2} and \ref{huy_fn}. In order to deform $\cup_i C_i$ into a smooth curve $C$ preserving the hyperbolicity of $X\setminus (C\setminus D)$, we apply Zaidenberg's method \cite{zai89}
in Section \ref{zai_sec} (see Lemma \ref{zai}). Starting with a log smooth projective surface pair $(X,D)$ and a set of rational curves
$\{C_i\}$ with $X\setminus((\cup_i C_i)\setminus D)$ being Brody hyperbolic, we introduce the concept of \emph{admissible deformation} (see Definition \ref{adm_def}) in order to preserve the hyperbolicity of $X\setminus (C\setminus D)$ under deformation. Using the technique of smoothing of rational trees in the deformation process (e.g. \cite[II.7]{kol96}), we are able to translate an admissible deformation of rational curves into an admissible contraction of their dual graphs (see Lemma \ref{primitive}). In Section \ref{graph_sec}, we study dual graphs that can be admissibly contracted into singletons. Using these results, we construct a smoothing $C$ of $\cup_{i}C_i$ preserving the  hyperbolicity of $X\setminus(C\setminus\Delta)$ under certain assumptions on the dual graph of $\cup_i C_i$ (see Lemma \ref{zai_gen}). Applying this lemma to $X=\bP^2$ or $\bF_N$ gives smooth curves $C$ and $D$ with certain (bi)degrees such that $X\setminus(C\setminus D)$ is Brody hyperbolic. In Section \ref{proofs}, we prove Theorem \ref{main} and \ref{main2}. As an application of Theorem \ref{main}, we give new examples of Brody hyperbolic surfaces in $\bP^3$ of minimal degree $10$ that are cyclic covers of $\bP^2$ under linear projections (see Theorem \ref{cycsurf}). This also improves \cite[Theorem 25]{liu16}. We mention that a Brody hyperbolic Horikawa surface of even $c_1^2$ has to be a double cover of $\bP^2$ branched along a degree $10$ curve (see Remark \ref{finalrmk}).

\subsection*{Notation}
Throughout this paper, we work over the complex numbers $\bC$.
For a subset $U$ of a projective variety $X$, we say that
$U$ is \emph{Brody hyperbolic} if any non-constant holomorphic map
$\phi:\bC\to X$ satisfies $\phi(\bC)\not\subset U$. A divisor $D$ on a smooth surface $X$ is \emph{normal crossing} if $D$ is reduced and has only nodal singularities. Moreover, a normal crossing divisor $D$ is said to be \emph{simple normal crossing} if all irreducible components of $D$ are smooth. We say that a surface pair $(X,D)$ is \emph{log smooth} if $X$ is a smooth surface and $D$ is a simple normal crossing divisor on $X$. A reduced projective curve is \emph{stable} (in the sense of Deligne-Mumford) if it has only nodal singularities and its dualizing sheaf is ample.

\subsection*{Acknowledgement}
I would like to thank Xavier Roulleau and Erwan Rousseau for fruitful discussions. I wish to thank Dinh Tuan Huynh, J\'anos Koll\'ar and Ziquan Zhuang for helpful comments and suggestions, and Christian Liedtke for his interest. I also wish to thank the anonymous referees for their careful work. The author was partially supported by NSF Grant DMS-1362960.

\section{Zaidenberg-Duval's method}\label{duval}

We first recall the following known facts from complex analysis whose proof is a simple application of the classical Hurwitz Theorem.
(See also \cite[3.6.11]{kob98}, \cite[Stability of intersections]{duv} or \cite[Section 3.1]{huy16}.)

\begin{lem}[Stability of intersections]\label{stabint}
 Let $X$ be a normal proper complex analytic space. Let $S$ be an effective
 Weil divisor in $X$, i.e. $S$ is a sum of closed analytic
 subvarieties of codimension $1$. Suppose that a sequence of 
 entire curves $(\phi_n)$ in $X$ converges to an entire 
 curve $\phi$. If $\phi(\bC)\not\subset \Supp(S)$, then 
 \[
  \phi(\bC)\cap S^{\circ}\subset\lim_{n\to\infty}
  \phi_n(\bC)\cap \Supp(S),
 \]
 where $S^{\circ}:=\{x\in\Supp(S)\mid S\textrm{ is }\bQ\textrm{-Cartier in a neighborhood of }
 x\}$.
\end{lem}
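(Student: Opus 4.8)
The plan is to reduce the statement to the one-variable Hurwitz theorem applied to local defining equations of the components of $S$. First I would observe that since $X$ is normal, the singular locus of $X$ has codimension at least $2$, so after removing a pluripolar (measure-zero) set of parameters we may assume the limit curve $\phi$ meets $S^\circ$ only at points where $X$ is smooth along $S$ and $S$ is $\bQ$-Cartier; near such a point $x$ we can pass to a finite cover or simply clear denominators so that some multiple $mS$ is Cartier, cut out by a single holomorphic function $h$ on a small ball $B$ around $x$. The key point is that $\phi(\bC)\not\subset\Supp(S)$ guarantees $h\circ\phi\not\equiv 0$, so the zeros of $h\circ\phi$ are isolated; this is what makes the Hurwitz argument run.

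Next I would fix a point $p\in\phi(\bC)\cap S^\circ$, say $\phi(t_0)=p\in B$, and choose a small closed disk $\overline{\Delta}\subset\bC$ centered at $t_0$ such that $\phi(\overline{\Delta})\subset B$ and $h\circ\phi$ has no zero on $\partial\Delta$ (possible since zeros of $h\circ\phi$ are isolated, after shrinking). Since $\phi_n\to\phi$ uniformly on compact sets (convergence of entire curves is local uniform convergence), for $n\gg 0$ we have $\phi_n(\overline{\Delta})\subset B$ as well, and $h\circ\phi_n\to h\circ\phi$ uniformly on $\overline{\Delta}$. By Hurwitz's theorem, for $n$ large the function $h\circ\phi_n$ has a zero inside $\Delta$; that zero gives a point of $\phi_n(\bC)\cap\Supp(S)$ lying in $B$, hence within distance $\to 0$ of $p$ as we shrink $\Delta$ and let $n\to\infty$. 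Taking a decreasing sequence of disks $\Delta$ and a diagonal argument over $n$ produces points $q_n\in\phi_n(\bC)\cap\Supp(S)$ with $q_n\to p$, which is exactly the assertion $p\in\lim_{n\to\infty}\phi_n(\bC)\cap\Supp(S)$.

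Finally I would note that this argument is purely local around each point of $\phi(\bC)\cap S^\circ$, so it applies verbatim at every such point, and that working component-by-component of the effective divisor $S$ (each with its own local equation) causes no difficulty since a zero of any component's equation is a point of $\Supp(S)$. The one technical subtlety — and the main thing to be careful about — is the passage to the $\bQ$-Cartier setting: one must check that replacing $S$ by $mS$ to get an honest Cartier divisor, and thus an honest holomorphic defining function $h$, does not change $\Supp(S)$ and that the non-containment hypothesis $\phi(\bC)\not\subset\Supp(S)$ still yields $h\circ\phi\not\equiv 0$; this is immediate since $\Supp(mS)=\Supp(S)$. Everything else is the classical Hurwitz argument, which is why the lemma is stated as ``a simple application of the classical Hurwitz Theorem.''
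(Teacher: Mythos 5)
Your core argument is the paper's own proof: at a point $p\in\phi(\bC)\cap S^{\circ}$ take a local holomorphic equation $h$ of $mS$ on a small neighborhood (possible precisely because $S$ is $\bQ$-Cartier there, and $\Supp(mS)=\Supp(S)$), note that $h\circ\phi\not\equiv 0$ since otherwise $\phi^{-1}(\Supp(S))$ would be an analytic subset of $\bC$ with nonempty interior, forcing $\phi(\bC)\subset\Supp(S)$, and then apply Hurwitz on small disks around a parameter with $\phi(t_0)=p$ to produce zeros of $h\circ\phi_n$, i.e.\ points of $\phi_n(\bC)\cap\Supp(S)$ accumulating at $p$; this is correct and is exactly how the lemma is proved in the paper. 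One correction: your opening reduction --- ``removing a pluripolar set of parameters'' so that $\phi$ meets $S^{\circ}$ only at points where $X$ is smooth along $S$ --- is neither a legitimate move nor needed. The asserted inclusion must be verified at \emph{every} point of $\phi(\bC)\cap S^{\circ}$, and nothing prevents $\phi(\bC)$ from meeting (or even lying inside) $\Sing(X)$; you cannot discard such points. Fortunately your second paragraph never uses smoothness: $\bQ$-Cartierness at $x$, which is built into the definition of $S^{\circ}$, already yields the local defining function for $mS$ on the possibly singular $X$, and no finite cover is required either. With that first reduction deleted, the rest of your write-up coincides with the paper's argument.
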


The following lemma was proved in \cite[Section 4]{huy16} (see also \cite[Lemma]{duv}).

\begin{lem}\label{huynh}
 Let $X$ be a smooth projective surface. Let $\{C_i\}_{i=1}^m$ be a set of irreducible curves on $X$ such that $(X,\sum_{i=1}^m C_i)$ is log smooth. Let $L$ be a globally generated line bundle on $X$. Assume the following holds:
 \begin{enumerate}[label=(\alph*)]
     \item $X\setminus (\cup_{i=1}^m C_i)$ is Brody hyperbolic;
     \item For any $i$, $\cup_{j\neq i} C_j$ is a stable curve;
     \item For any $i$, there exists an effective Cartier divisor $H_i\in |L|$ such that $\Supp(H_i)=\cup_{j\neq i} C_j$.
 \end{enumerate}
 Then there exists a smooth curve $S\in |L|$ such that $(X, S+\sum_{i=1}^m C_i)$ is log smooth
 and $X\setminus \big((\cup_{i=1}^m C_i)\setminus S\big)$ is Brody hyperbolic.
\end{lem}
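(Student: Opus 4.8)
The plan is to prove the contrapositive by a Zaidenberg--Duval degeneration inside the linear system $|L|$. \emph{Choice of $S$.} Since $L$ is globally generated, $|L|$ is base point free, so by Bertini's theorem a general $S\in|L|$ is smooth, meets each $C_i$ transversally, and avoids the finitely many nodes of $\sum_i C_i$; hence $(X,S+\sum_i C_i)$ is log smooth. I would impose on $S$ finitely many further genericity conditions, notably that for every $i$ the pencil $P_i:=\langle S,H_i\rangle\subset|L|$ has base locus $S\cap(\cup_{j\neq i}C_j)$ and smooth general member. One then checks that a general $S$ is itself Brody hyperbolic (in the cases to which this lemma is applied $S$ has genus $\geq 2$, using that $p_a(L)=p_a(H_i)$ and that $\cup_{j\neq i}C_j$ is stable by (b)); in particular no non-constant entire curve has image in $S$.

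\emph{Set-up of the contrapositive.} Suppose $X\setminus\big((\cup_i C_i)\setminus S\big)$ is not Brody hyperbolic and pick a non-constant $\phi:\bC\to X$ with image in it, i.e.\ with $\phi(\bC)\cap\bigcup_i C_i\subset S$. If $\phi(\bC)\subset S$ this contradicts the Brody hyperbolicity of $S$; if $\phi(\bC)\subset C_{i_1}$ for some $i_1$ then $\phi(\bC)\subset C_{i_1}\cap S$ is finite and $\phi$ is constant. Hence $\phi(\bC)\not\subset S\cup\bigcup_i C_i$, while by (a) $\phi$ does meet $\bigcup_i C_i$, necessarily only at points of $S$.

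\emph{The degeneration.} For each $i$ I would run the Zaidenberg--Duval percolation argument (\cite{zai89, sz02, duv04, duv}) along the pencil $P_i=\langle S,H_i\rangle$, viewed as a one-parameter degeneration of the smooth (hyperbolic) curve $S$ to the divisor $H_i$: the support $\cup_{j\neq i}C_j$ of $H_i$ omits $C_i$ -- which is precisely what (c) provides -- and is a stable curve, which is (b). Since $\phi$ meets $C_i$ only at points of $S$ (a finite set, as $C_i\not\subset S$), a limiting process along $P_i$ together with Brody rescaling (\cite{bro78}), the stability of intersections (Lemma \ref{stabint}) and Hurwitz's theorem produces a non-constant entire curve that avoids $C_i$; the stability of $\cup_{j\neq i}C_j$ then enters through the hyperbolic non-percolation of \cite{sz02} to guarantee that this limit curve is not absorbed into $\cup_{j\neq i}C_j$. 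Carrying this out for all $i$ and assembling the limits yields a non-constant entire curve with image in $X\setminus\bigcup_i C_i$, contradicting (a) and proving the lemma.

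\emph{Main obstacle.} The delicate part is the control of the limit: one must simultaneously keep the rescaled limit curve non-constant, off \emph{every} $C_i$, and unabsorbed by any component of $\bigcup_i C_i$. This is exactly where the three hypotheses interlock -- (a) furnishes the contradiction target, (b) the stability needed for non-percolation, and (c) the degeneration of $S$ that ``forgets'' each $C_i$ -- and where the care in choosing $S$ generically (transversality and controlled base loci of the $P_i$) is used. Combining the $m$ pencil-degenerations into a single entire curve avoiding all of $\bigcup_i C_i$ is the technical heart of the argument.
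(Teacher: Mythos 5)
Your proposal assembles the right ingredients (pencils $\langle S,H_i\rangle$, Brody rescaling, Lemma \ref{stabint}, hypothesis (b) as the non-percolation input), but it is missing the mechanism that actually makes the argument close, and that mechanism is the heart of the paper's proof (and of \cite[Section 4]{huy16}). The paper does \emph{not} fix one general $S$ and then degenerate: it constructs $S$ inductively as $S^{(0)},S^{(1)},\dots,S^{(m)}$, where $S^{(l)}$ is a small perturbation of $S^{(l-1)}$ inside the pencil spanned by $H_l$ and $S^{(l-1)}$, and the inductive statement is that $U_l:=(X\setminus\cup_i C_i)\cup\big((\cup_{i\le l}C_i)\cap S^{(l)}\big)$ is Brody hyperbolic. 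Two things make this work and are absent from your plan. First, the contradiction argument needs a \emph{sequence} of entire curves attached to a sequence of parameters $t_n\to 0$ in the pencil; in your setup the hypothetical entire curve $\phi$ is attached to the single fixed $S$, and degenerating $S$ to $H_i$ does not transport $\phi$ along, so there is nothing for Brody's lemma and stability of intersections to act on. (The paper gets the sequence precisely because its inductive claim is quantified over all small $t$, so its failure yields curves $\phi_{t_n}$ for $t_n\to 0$.) Second, the reason the property accumulates over the indices is that for $j\neq l$ one has $C_j\subset\Supp(H_l)$, hence $C_j\cap S_t=C_j\cap S^{(l-1)}$ is \emph{frozen} along the pencil; this is what lets the limit curve be pushed into $U_{l-1}$ (Case ``$\phi_0\not\subset\cup C_i$'') or into a single component $C_k$ minus at least three points (Case ``$\phi_0\subset C_k$'', where (b) gives the contradiction). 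Your step ``carrying this out for all $i$ and assembling the limits yields a single entire curve avoiding all $C_i$'' is exactly the unproved point: each pencil produces one limit curve avoiding only its own $C_i$, and limit curves cannot be iterated or combined without the inductive bookkeeping above. You flag this yourself as the ``main obstacle,'' but the lemma is precisely about overcoming it, so as written the proposal is not a proof.

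Two smaller points. Your reduction needs $S$ itself to be Brody hyperbolic, and you justify this by appealing to ``the cases to which this lemma is applied,'' which is not available when proving the lemma in the stated generality; it can in fact be derived from the hypotheses (e.g. $2p_a(S)-2=L\cdot(L+K_X)=H_i\cdot(H_i+K_X)\ge \deg\omega_{\cup_{j\neq i}C_j}\ge 2$, using that $L$ is nef because it is globally generated and that $\cup_{j\neq i}C_j$ is stable), but you would have to prove it. Note also that the paper's inductive argument never needs this fact: an entire curve lying inside a member $S_t$ is treated on the same footing as any other, since the contradiction is extracted from the Brody limit, not from hyperbolicity of $S_t$.
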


\begin{proof} See \cite[Section 4]{huy16}.
 \end{proof}

 The following corollary was proved in \cite[Section 4]{huy16} using Lemma \ref{huynh}.

\begin{cor}[{\cite[Section 4]{huy16}}]\label{huy_p2}
 Let $\{C_i\}_{i=1}^{m}$ be a set of lines in general position in
 $\bP^2$ with $m\geq 5$. Let $d\geq 4$ be an integer.
 Then there exists a smooth plane curve $S$ of degree $d$ such that $(\bP^2, S+\sum_{i=1}^m C_i)$ is log smooth
 and $\bP^2\setminus \big((\cup_{i=1}^m C_i) \setminus S\big)$ is Brody hyperbolic.
\end{cor}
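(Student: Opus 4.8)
The plan is to reduce to the case of exactly five lines and then invoke Lemma \ref{huynh}. Since any five of the $m\geq 5$ given lines are still in general position, after relabeling it suffices to produce a smooth $S\in|\cO_{\bP^2}(d)|$ with $(\bP^2, S+C_1+\dots+C_5)$ log smooth and $\bP^2\setminus\big((C_1\cup\dots\cup C_5)\setminus S\big)$ Brody hyperbolic, while arranging in addition that $S$ is transverse to $C_6,\dots,C_m$ and avoids all the nodes of $\sum_{i=1}^m C_i$. Indeed, once this is done the pair $(\bP^2,S+\sum_{i=1}^m C_i)$ is log smooth; and since $(\cup_{i=1}^m C_i)\setminus S\supseteq(\cup_{i=1}^5 C_i)\setminus S$ we get $\bP^2\setminus\big((\cup_{i=1}^m C_i)\setminus S\big)\subseteq\bP^2\setminus\big((\cup_{i=1}^5 C_i)\setminus S\big)$, and a subset of a Brody hyperbolic set is Brody hyperbolic directly from the definition used in this paper.

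To apply Lemma \ref{huynh} with $X=\bP^2$, $L=\cO_{\bP^2}(d)$ and the lines $C_1,\dots,C_5$, I would check its three hypotheses. For (a), $\bP^2\setminus(C_1\cup\dots\cup C_5)$ is Brody hyperbolic by the classical fact that the complement of five lines in general position in $\bP^2$ is (complete, hence Brody) hyperbolic. For (b), fix $i\leq 5$: the reduced curve $\cup_{j\in\{1,\dots,5\}\setminus\{i\}}C_j$ is a union of four distinct lines in general position, so it is connected (its dual graph is $K_4$), it is nodal since no three of the lines are concurrent, and each of its four $\bP^1$-components carries exactly three nodes, so the dualizing sheaf restricts to $\cO_{\bP^1}(-2+3)=\cO_{\bP^1}(1)$ there and is ample; hence it is a stable curve. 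For (c), since $d\geq 4$, writing $\{j_1,j_2,j_3,j_4\}=\{1,\dots,5\}\setminus\{i\}$ the divisor $H_i:=(d-3)C_{j_1}+C_{j_2}+C_{j_3}+C_{j_4}$ is an effective Cartier divisor (every divisor on the smooth surface $\bP^2$ is Cartier) lying in $|\cO_{\bP^2}(d)|$ with $\Supp(H_i)=\cup_{j\neq i}C_j$. Thus Lemma \ref{huynh} yields a smooth $S\in|\cO_{\bP^2}(d)|$ with the two required properties relative to $C_1,\dots,C_5$.

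Finally I would build the extra general-position requirements on $S$ (transversality to $C_6,\dots,C_m$ and avoidance of the finitely many nodes of $\sum_{i=1}^m C_i$) into the choices of $S$ made inside the proof of Lemma \ref{huynh}: these are finitely many further Zariski-open conditions on members of $|\cO_{\bP^2}(d)|$, nonempty by Bertini since $\cO_{\bP^2}(d)$ is very ample, so they hold for a general member of $|\cO_{\bP^2}(d)|$ and of each pencil used, and can be imposed without disturbing the hyperbolicity argument. I expect no substantial obstacle here, since Lemma \ref{huynh} carries all the genuinely new content (cited from \cite[Section 4]{huy16}): the only point needing a little care is exactly this last compatibility, i.e.\ making the curve produced by Lemma \ref{huynh} simultaneously log smooth with respect to all $m$ lines, whereas the upgrade of the hyperbolicity statement from five lines to $m$ lines is purely formal.
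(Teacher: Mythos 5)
Your proposal is correct and follows essentially the same route as the paper's own argument (which it defers to \cite[Section 4]{huy16}): reduce to five of the lines, verify hypotheses (a)--(c) of Lemma \ref{huynh} exactly as you do (Green's theorem for (a), stability of four general lines for (b), $d\geq 4$ for (c)), and handle $m>5$ via the inclusion of complements. The only minor difference is that you keep the given $m$ lines fixed and impose the extra transversality and node-avoidance on $S$ through the Bertini/pencil choices inside Lemma \ref{huynh}, whereas the paper instead takes $C_6,\dots,C_m$ general; both work, and your version is if anything more faithful to the stated quantifier order.
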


\begin{cor}\label{huy_fn}
Let $\{C_i\}_{i=1}^{a+b}$ be a set of curves in $\bF_N$. 
Assume that $C_i$ is a general curve in $|F|$ for 
any $i\leq a$; $C_j$ is a general curve in $|T|$
for any $j>a$. Then 
there exists a smooth curve $S\in |cF+dT|$ in $\bF_N$
such that $(\bF_N, S+\sum_{i=1}^{a+b}C_i)$ is log smooth and $\bF_N\setminus\big((\cup_{i=1}^{a+b}C_i)\setminus S\big)$ is Brody hyperbolic if one of the
following is true:
\begin{itemize}
 \item $N=0$ and $a,b,c,d\geq 4$;
 \item $N\geq 1$, $a,c\geq 3$ and $b,d\geq 4$.
\end{itemize}
\end{cor}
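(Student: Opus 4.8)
The plan is to invoke Lemma \ref{huynh} for a minimal ``core'' subfamily of the $C_i$ and then to recover the general statement by the inclusion argument used in the proof of Corollary \ref{huy_p2}. Put
\[
(a',b'):=\begin{cases}(4,4), & N=0,\\ (3,4), & N\geq 1,\end{cases}
\]
so that $a'\leq a$ and $b'\leq b$ under our hypotheses. Relabel the $C_i$ so that $C_1,\dots,C_{a'}$ are general members of $|F|$ and $C_{a'+1},\dots,C_{a'+b'}$ are general members of $|T|$, and set $L:=\cO_{\bF_N}(cF+dT)$. Since $c,d\geq 0$, the class $cF+dT$ is nef on the smooth complete toric surface $\bF_N$, so $L$ is globally generated; in particular $|T|$ is base-point-free when $N\geq 1$. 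A standard Bertini argument shows that for a general choice the divisor $\sum_{i=1}^{a'+b'}C_i$ is simple normal crossing: the fibers $C_1,\dots,C_{a'}$ are smooth and pairwise disjoint; the sections $C_{a'+1},\dots,C_{a'+b'}$ are smooth, meet each other transversally in $T^2=N$ points, and no three of them share a common point; each fiber meets each section transversally in one point; and we may further assume that the images in $\bP^1$ of the pairwise intersection points of the chosen sections, taken over all pairs, are pairwise distinct. We now verify hypotheses (a), (b), (c) of Lemma \ref{huynh} for $X=\bF_N$, $\{C_i\}_{i=1}^{a'+b'}$ and $L$.

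\emph{Hypothesis (a).} Let $\pi\colon\bF_N\to\bP^1$ be the ruling (for $N=0$ pick one of the two rulings) and let $\phi\colon\bC\to\bF_N$ be holomorphic with $\phi(\bC)\subset\bF_N\setminus\bigcup_{i=1}^{a'+b'}C_i$. Then $\pi\circ\phi$ omits the $a'\geq 3$ points of $\bP^1$ below $C_1,\dots,C_{a'}$, hence is constant, so $\phi(\bC)$ lies in a single fiber $F_t\cong\bP^1$ which is not one of $C_1,\dots,C_{a'}$. The fiber $F_t$ meets each of the $b'$ sections in one point; by the last genericity assumption at most one pair of these points coincides, and (as no three sections share a point) such a coincidence point is distinct from the remaining ones. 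Hence $F_t$ meets $\bigcup_i C_i$ in at least $b'-1\geq 3$ distinct points, all avoided by $\phi$, so $\phi$ factors through $\bP^1$ minus at least three points and is therefore constant.

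\emph{Hypotheses (b) and (c).} Every $C_i$ is rational, so a reduced nodal curve supported on part of $\bigcup_i C_i$ is stable exactly when each of its components contains at least three nodes. If $C_i$ is a fiber, then $\bigcup_{j\neq i}C_j$ consists of $a'-1$ fibers, each carrying $b'$ nodes, and $b'$ sections, each carrying $(a'-1)+N(b'-1)$ nodes. If $C_i$ is a section, then $\bigcup_{j\neq i}C_j$ consists of $a'$ fibers, each carrying $b'-1$ nodes, and $b'-1$ sections, each carrying $a'+N(b'-2)$ nodes. For $(a',b')=(4,4)$ with $N=0$, and for $(a',b')=(3,4)$ with $N\geq 1$, all of these counts are $\geq 3$, so $\bigcup_{j\neq i}C_j$ is a stable curve for every $i$, giving (b). For (c), $\bF_N$ is smooth so every divisor is Cartier, and it remains to produce an effective divisor in $|cF+dT|$ with support $\bigcup_{j\neq i}C_j$. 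Writing that support as $p$ fibers and $q$ sections among the $C_j$, we have $2\leq p\leq a'\leq c$ and $3\leq q\leq b'\leq d$; picking positive integers $m_1,\dots,m_p$ with $\sum m_k=c$ and positive integers $n_1,\dots,n_q$ with $\sum n_l=d$ and letting $H_i$ be the corresponding combination of those fibers and sections, we get $H_i\in|cF+dT|$ with $\Supp(H_i)=\bigcup_{j\neq i}C_j$. Lemma \ref{huynh} now yields a smooth curve $S\in|cF+dT|$ with $(\bF_N,\,S+\sum_{i=1}^{a'+b'}C_i)$ log smooth and $\bF_N\setminus\big((\bigcup_{i=1}^{a'+b'}C_i)\setminus S\big)$ Brody hyperbolic.

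Finally, with $S$ now fixed, choose the remaining $a-a'$ members of $|F|$ and $b-b'$ members of $|T|$ general enough that $(\bF_N,\,S+\sum_{i=1}^{a+b}C_i)$ is still log smooth. Since $\bigcup_{i=1}^{a'+b'}C_i\subset\bigcup_{i=1}^{a+b}C_i$ we obtain
\[
\bF_N\setminus\Big(\big(\textstyle\bigcup_{i=1}^{a+b}C_i\big)\setminus S\Big)\ \subseteq\ \bF_N\setminus\Big(\big(\textstyle\bigcup_{i=1}^{a'+b'}C_i\big)\setminus S\Big),
\]
and the left-hand set is Brody hyperbolic because the right-hand set is. The one delicate point is the choice of the core sizes $(a',b')$: hypothesis (a) forces $a'\geq 3$, and it forces $b'\geq 4$ when $N\geq 1$ (through the ``bad fibers'' meeting a section–section intersection) but only $b'\geq 3$ when $N=0$, while the stability hypothesis (b) forces $b'\geq 4$ always and $a'\geq 4$ when $N=0$; taking $(a',b')$ minimal subject to these constraints, hypothesis (c) becomes precisely $c\geq a'$ and $d\geq b'$, that is, $c,d\geq 4$ if $N=0$ and $c\geq 3,\ d\geq 4$ if $N\geq 1$.
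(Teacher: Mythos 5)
Your proposal is correct and follows essentially the same route as the paper: reduce to the minimal core cases $(a',b')=(4,4)$ for $N=0$ and $(3,4)$ for $N\geq 1$, verify hypotheses (a), (b), (c) of Lemma \ref{huynh} (with (a) checked via the ruling to $\bP^1$ and hyperbolicity of $\bP^1$ minus three points, (b) by node counts on the rational components, (c) by distributing multiplicities on the remaining fibers and sections), and then deduce the general case by the inclusion and genericity argument. The only differences are cosmetic, e.g.\ you also treat $N=0$ by the fibration argument rather than the product structure $(\bP^1\setminus\{4\ \textrm{points}\})\times(\bP^1\setminus\{4\ \textrm{points}\})$, and you make the global generation of $L$ and the choice of $H_i$ explicit.
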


\begin{proof}
 Firstly, let us consider special cases where $a, b$ achieve their minima, i.e. $N=0$, $a=b=4$ or $N\geq 1$, $a=3$, $b=4$.
 Since both linear systems $|F|$ and $|T|$ are base point free, for a general choice of $\{C_i\}_{i=1}^{a+b}$ the pair $(\bF_N, \sum_{i=1}^{a+b}C_i)$ is log smooth.
 Let $L:=\cO_{\bF_N}(cF+dT)$ be a line bundle on $\bF_N$.
 Then we only need to show that the assumptions (a), (b) and (c) of Lemma \ref{huynh} are fulfilled for $(\bF_N,\sum_{i=1}^{a+b}C_i)$ and $L$.
 
 If $N=0$ and $a=b=4$, then $\bF_0=\bP^1\times
 \bP^1$ and $\{C_i\}_{i=1}^8$ consists of $4$ 
 vertical lines and $4$ horizontal lines in 
 general position. It is clear that 
 $\bF_0\setminus\cup_{i=1}^8 C_i\cong 
 (\bP^1\setminus \{4\textrm{ points}\})\times
 (\bP^1\setminus \{4\textrm{ points}\})$ is Brody
 hyperbolic, so (a) is satisfied. For (b), each
 $C_j$ intersects four $C_k$'s with $k\neq j$. So
 for any $i\neq j$, $C_j$ intersects with at least
 three $C_k$'s with $k\not\in\{i,j\}$.
 Since $C_i\cap C_j\cap C_k=\emptyset$,
 $\cup_{j\neq i}C_j$ is stable, hence (b) is satisfied.
 Since $c,d\geq 4$, $C_i\sim F$ for $1\leq i\leq 4$ and 
 $C_j\sim T$ for $5\leq j\leq 8$, it is easy to see that (c) is also satisfied.
 
 If $N\geq 1$, $a=3$ and $b=4$, then the natural fibration $\pi: \bF_N\to \bP^1$ maps $C_1$, $C_2$, $C_3$ to three distinct points in $\bP^1$. It is clear that $(F\cdot C_i)=(F\cdot T)=1$ for $i=4,\cdots,7$. Hence for a general
 choice of $\{C_i\}_{i=4}^7$, the set $F\cap(\cup_{i=4}^7 C_i)$ has at least three points for any fiber $F$ of $\pi$. Since $\bP^1\setminus\{3\textrm{ points}\}$ is Brody hyperbolic, the
 fiber and the base of $\pi:\bF_N\setminus\cup_{i=1}^7 C_i\to\bP^1\setminus\{\pi(C_1),\pi(C_2),\pi(C_3)\}$ are Brody hyperbolic. Hence (a) is satisfied. For (b), each $C_i$ with $1\leq i\leq 3$ intersects each $C_k$ with $4\leq k\leq 7$. Since $(T^2)=N\geq 1$, each $C_i$ with $4\leq i\leq 7$ intersects 
 each $C_k$ with $k\neq i$. As a result, each $C_i$ intersects with at least four $C_k$'s with $k\neq i$. So (b) is satisfied by the same reason as in the last paragraph. Since $c\geq 3$, $d\geq 4$, $C_i\sim F$ for $1\leq i\leq 3$, and $C_i\sim T$ for $4\leq i\leq 7$, it is easy to see that (c) is also satisfied.
 
 Up to now we have shown the corollary for cases where $a,b$ achieve
 their minima. More precisely, under the assumptions of
 $N,a,b,c,d$, for general choices of
 $\{C_i\}_{i=1}^{a_{\min}}$ and $\{C_j\}_{j=a+1}^{a+b_{\min}}$
 there exists a smooth curve $S\in |cF+dT|$ in $\bF_N$, such
 that $(\bF_{N}, S+\sum_{i=1}^{a_{\min}}C_i+\sum_{j=a+1}^{a+b_{\min}} C_j)$ is log smooth
 and $\bF_N\setminus\big(((\cup_{i=1}^{a_{\min}}C_i)\cup(\cup_{j=a+1}^{a+b_{\min}} C_j) )\setminus S\big)$
 is Brody hyperbolic. 
 If one of $a,b$ is strictly bigger than its minimum, then 
 \[
 \bF_N\setminus\big((\cup_{i=1}^{a+b}C_i)\setminus S\big)\subset
 \bF_N\setminus\big(((\cup_{i=1}^{a_{\min}}C_i)\cup(\cup_{j=a+1}^{a+b_{\min}} C_j) )\setminus S\big)
 \]
 where the latter set is Brody hyperbolic.
 Hence $\bF_N\setminus\big((\cup_{i=1}^{a+b}C_i)\setminus S\big)$
 is Brody hyperbolic. Besides, since $(\bF_{N}, S+
 \sum_{i=1}^{a_{\min}}C_i+\sum_{j=a+1}^{a+b_{\min}} C_j)$ is log smooth, for general choices of $\{C_i\}_{i=a_{\min}+1}^a$ and $\{C_j\}_{j=a+b_{\min}+1}^{a+b}$ we also have that $(X, S+\sum_{i=1}^{a+b} C_i)$ is log smooth. This finishes the proof.
\end{proof}

\section{Admissible contractions of multigraphs}\label{graph_sec}

\begin{defn}
\begin{enumerate}[label=(\alph*)]
\item A \emph{vertex-weighted multigraph} $G$ is an ordered quadruple $(V,E,r, \wt)$ such that
\begin{itemize}
\item $V$ is a finite set of vertices;
\item $E$ is a finite set of edges;
\item $r: E\to \{\{v,w\}:v,w\in V, v\neq w\}$ assigns each edge an
unordered pair of endpoint vertices;
\item $\wt:V\to \bZ$ assigns to each vertex an integer as its weight.
\end{itemize}
\item For a vertex $v\in V$, we define the \emph{degree} (respectively \emph{reduced degree}) of $v$ to be its number of incident edges (respectively adjacent vertices). More precisely, 
\[
\deg(v):=\#\{e\in E: v\in e\},\quad \rdeg(v):=\#\{w\in V: \{v,w\}\in r(E)\}.
\]
\item Let $G_1,G_2$ be two vertex-weighted multigraphs. We say that
$G_1$ is a \emph{submultigraph} of $G_2$ if there exists injective maps
$\phi: V_1\to V_2$ and $\psi: E_1\to E_2$, such that
$r_2\circ \psi = \phi\circ r_1$ and $\wt_1\leq \wt_2\circ\phi$.
If, moreover, $\phi$ is bijective, then we say that $G_1$ is a \emph{spanning submultigraph} of $G_2$.

\item A vertex-weighted multigraph is \emph{completely multipartite} if there does not exist a triple of vertices $\{v_1,v_2,v_3\}$ such that both $\{v_1,v_2\}$ and $\{v_1,v_3\}$ are non-adjacent, but $\{v_2,v_3\}$ is adjacent.
\end{enumerate}
\end{defn}

\begin{defn}
\begin{enumerate}[label=(\alph*)]
\item
Let $G, G'$ be two vertex-weighted multigraphs. We say that $G'$ is a
\emph{contraction} of $G$ \emph{with respect to} a pair of adjacent vertices $\{v,w\}$ 
in $G$ if there exist maps $\phi: V\to V'$ and $\psi: E\setminus r^{-1}(\{v,w\})\to E'$ such that
\begin{itemize}
 \item $\phi(v)=\phi(w)$, and $\phi$ induces a bijection between $V\setminus\{v,w\}$ and $V'\setminus\{\phi(v)\}$;
 \item $\psi$ is bijective, and $r'\circ\psi=\phi\circ r$ as maps from
 $E\setminus r^{-1}(\{v,w\})$;
 \item $\wt'(\phi(v))=\wt(v)+\wt(w)$, and $\wt'\circ\phi =\wt$ as maps
 from $V\setminus\{v,w\}$ to $\bZ$.
\end{itemize}
\item A contraction $G'$ of $G$ with respect to $\{v,w\}$
is said to be \emph{admissible} if there exists a non-negative integer $l <\# r^{-1}(v,w)$ such that 
the following conditions hold:
\begin{itemize}
 \item For each vertex $x$ other than $v$ and $w$, $\deg(x)\geq 3$;
 \item $\wt(v)\geq l+1$ and $\wt(w)\geq l+2$;
 \item $\deg(v)-\# r^{-1}(\{v,w\})+l\geq 3$ and $\deg(w)-\#  r^{-1}(\{v,w\})+l\geq 3$.
\end{itemize}
\item A vertex-weighted multigraph $G$ is said to be \emph{admissibly contractible} if there exists a sequence of vertex-weighted multigraphs
$(G_i)_{i=0}^k$ such that $G_0=G$, $G_k$ is a singleton, and $G_i$ is an
admissible contraction of $G_{i-1}$ for each $1\leq i\leq k$.
\end{enumerate}
\end{defn}

\begin{expl}
We give an illustration of an admissible contraction of vertex-weighted multigraphs.
\begin{center}
\qquad $G:=$
\begin{tikzpicture}[scale=0.8, baseline=(current bounding box.center)]
\node[circle,fill=yellow,draw,label={[shift={(-0.6,-0.6)}]$v_1$}] (a) at (1.85,0){$3$};
\node[circle,fill=yellow,draw,label={$v_2$}] (c) at (3.2,2.4){$3$};
\node[circle,draw,label={$v_3$}] (e) at (0.5,2.4){$3$};

\draw (a) edge[bend left=8] (c);
\draw (a) edge[bend left=8] (e);
\draw (a) edge[bend right=8] (c);
\draw (a) edge[bend right=8] (e);
\draw (c) edge[bend left=8] (e);
\draw (c) edge[bend right=8] (e);
\end{tikzpicture}
$\xrightarrow{l=1}$
\begin{tikzpicture}[scale=0.8, baseline=(current bounding box.center)]
\node[circle,draw,label={$w_2$}] (c) at (0,0){$3$};
\node[circle,draw,label={$w_1$},label={[shift={(0.5,-0.6)}]$*$}] (e) at (2.7,0){$6$};

\draw (c) edge[bend left=6] (e);
\draw (c) edge[bend right=6] (e);
\draw (c) edge[bend left=16] (e);
\draw (c) edge[bend right=16] (e);
\end{tikzpicture}
$=:G'$
\end{center}

Here $V_G=\{v_1,v_2,v_3\}$, $V_{G'}=\{w_1,w_2\}$, $\wt_G(v_i)=3$ for any $1\leq i\leq 3$, $\wt_{G'}(w_1)=6$ and $\wt_{G'}(w_2)=3$. Each vertex is represented as a circle in the picture. The name of each vertex is marked outside the circle, and its weight is marked inside the circle. Each edge connecting two vertices is represented as an arc connecting two circles.

In the illustration above, we see that $H$ is a contraction of $G$ with
respect to $\{v_1,v_2\}$, where $\phi$ is given by $\phi(v_1)=\phi(v_2)=w_1$ and $\phi(v_3)=w_2$. Each contraction is represented as an arrow. The two merging vertices of $G$ are represented as yellow filled circles, and we mark $*$ outside circles representing their images under $\phi$.
If a contraction is admissible, we mark the corresponding value of $l$ above the arrow. It is easy to verify that in the picture above, $G'$ is an admissible contraction of $G$ with respect to $\{v_1,v_2\}$.
\end{expl}

The following lemma follows easily from the definitions.
\begin{lem}
 Let $G$ be a vertex-weighted multigraph. Let $H$ be a spanning submultigraph of $G$. If $H$ is admissibly contractible, then so is $G$. 
\end{lem}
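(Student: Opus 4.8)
The plan is to prove the statement by induction on the length $k$ of an admissible contraction sequence witnessing that $H$ is admissibly contractible, the heart of the matter being a single-step transfer principle from $H$ to $G$.

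First I would unwind the hypothesis. A spanning submultigraph structure provides a bijection $\phi\colon V_H\to V_G$ and an injection $\psi\colon E_H\to E_G$ with $r_G\circ\psi=\phi\circ r_H$ and $\wt_H\le\wt_G\circ\phi$; identifying $V_H$ with $V_G$ via $\phi$ and regarding $E_H$ as a subset of $E_G$ via $\psi$, every edge of $H$ carries the same unordered pair of endpoints whether read in $H$ or in $G$. From this I would record three monotonicity facts: $\wt_G(x)\ge\wt_H(x)$ and $\deg_G(x)\ge\deg_H(x)$ for every vertex $x$; and, for any adjacent pair $\{v,w\}$ of $H$, writing $k_H:=\#r_H^{-1}(\{v,w\})$ and $k_G:=\#r_G^{-1}(\{v,w\})$, one has $k_G\ge k_H$ together with $\deg_G(v)-k_G\ge\deg_H(v)-k_H$ (and likewise for $w$), because distinct edges of $H$ at $v$ not joining $v$ to $w$ stay distinct edges of $G$ at $v$ not joining $v$ to $w$.

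The single-step claim is: if $H'$ is an admissible contraction of $H$ with respect to $\{v,w\}$, then the contraction $G'$ of $G$ with respect to the same pair is admissible, and $H'$ is a spanning submultigraph of $G'$. For admissibility I would reuse the very same nonnegative integer $l$ that witnesses the admissibility of $H\to H'$: from $l<k_H\le k_G$ the bound $l<k_G$ survives; the condition $\deg(x)\ge 3$ for $x\ne v,w$ passes to $G$ since $\deg_G(x)\ge\deg_H(x)$; the weight conditions $\wt_G(v)\ge l+1$ and $\wt_G(w)\ge l+2$ follow from $\wt_G\ge\wt_H$; and $\deg_G(v)-k_G+l\ge\deg_H(v)-k_H+l\ge 3$ (with the analogue for $w$) follows from the third monotonicity fact. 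For the second assertion I would recall that contracting $\{v,w\}$ deletes the edges between $v$ and $w$ and merges them into a single vertex of weight $\wt(v)+\wt(w)$: the vertex bijection descends to a bijection $V_{H'}\to V_{G'}$, the inclusion $E_H\hookrightarrow E_G$ restricts to $E_{H'}=E_H\setminus r_H^{-1}(\{v,w\})\hookrightarrow E_G\setminus r_G^{-1}(\{v,w\})=E_{G'}$ since edges keep their endpoints, the identity $r_{G'}\circ\psi=\phi\circ r_{H'}$ is inherited from $r_G\circ\psi=\phi\circ r_H$, and the weight inequality persists at the merged vertex because $\wt_G(v)+\wt_G(w)\ge\wt_H(v)+\wt_H(w)$ and is unchanged elsewhere.

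Finally I would run the induction on $k$. If $k=0$ then $H$ is a singleton, hence so is $G$ (one vertex under $\phi$, no edges), and a singleton is admissibly contractible by definition via the empty sequence. For the inductive step, take an admissible contraction sequence $H=H_0\to H_1\to\cdots\to H_k$ with $H_k$ a singleton, the first step taken with respect to some pair $\{v,w\}$; the single-step claim produces an admissible contraction $G_1$ of $G$ with $H_1$ a spanning submultigraph of $G_1$, and since $H_1$ is admissibly contractible in $k-1$ steps, the induction hypothesis applied to the pair $(G_1,H_1)$ shows $G_1$ is admissibly contractible, so prepending the step $G\to G_1$ shows that $G$ is too. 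I do not foresee a genuine obstacle here; the only point demanding care is the monotonicity of $\deg(\cdot)-\#r^{-1}(\{v,w\})$ under passing to the larger graph, which is exactly what makes the same $l$ serve for $G$ and keeps the induction running.
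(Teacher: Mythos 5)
Your proof is correct, and it is precisely the routine verification the paper has in mind: the lemma is stated there with no proof beyond ``follows easily from the definitions,'' and your single-step transfer of the same witness $l$ (using $\wt_G\ge\wt_H$, $\deg_G\ge\deg_H$, and the monotonicity of $\deg(\cdot)-\#r^{-1}(\{v,w\})$) together with the induction on the contraction sequence fills in exactly that easy argument.
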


\begin{prop}\label{samplegraphs}
The following vertex-weighted multigraphs $K_1$, $K_2$, $K_3$ and $K_4$ are all admissibly contractible:
\begin{center}
$K_1=$
\begin{tikzpicture}[scale=0.8, baseline=(current bounding box.center)]
\node[circle,draw,label={[shift={(-0.6,-0.6)}]$v_1$}] (a) at (1,0){$2$};
\node[circle,draw,label={[shift={(0.6,-0.6)}]$v_2$}] (b) at (3,0){$2$};
\node[circle,draw,label={$v_3$}] (c) at (0,2){$2$};
\node[circle,draw,label={$v_4$}] (d) at (4,2){$2$};
\node[circle,draw,label={$v_5$}] (e) at (2, 3.5){$2$};
\draw (a) edge (b);
\draw (a) edge (c);
\draw (a) edge (d);
\draw (a) edge (e);
\draw (b) edge (c);
\draw (b) edge (d);
\draw (b) edge (e);
\draw (c) edge (d);
\draw (c) edge (e);
\draw (d) edge (e);
\end{tikzpicture}
\qquad
$K_2=$
\begin{tikzpicture}[scale=0.8, baseline=(current bounding box.center)]
\node[circle,draw,label={[shift={(-0.6,-0.6)}]$v_1$}] (a) at (1,0){$2$};
\node[circle,draw,label={[shift={(0.6,-0.6)}]$v_2$}] (b) at (2.7,0){$2$};
\node[circle,draw,label={$v_3$}] (c) at (3.7,1.6){$2$};
\node[circle,draw,label={$v_4$}] (d) at (2.7,3.2){$2$};
\node[circle,draw,label={$v_5$}] (e) at (1,3.2){$2$};
\node[circle,draw,label={$v_6$}] (f) at (0,1.7){$2$};
\draw (a) edge (b);
\draw (a) edge (c);
\draw (a) edge (e);
\draw (a) edge (f);
\draw (b) edge (c);
\draw (b) edge (d);
\draw (b) edge (f);
\draw (c) edge (d);
\draw (c) edge (e);
\draw (d) edge (e);
\draw (d) edge (f);
\draw (e) edge (f);
\end{tikzpicture}
\end{center}

\begin{center}
$K_3=$
\begin{tikzpicture}[scale=0.8, baseline=(current bounding box.center)]
\node[circle,draw,label={[shift={(0.6,-0.8)}]$v_1$}] (a) at (1.75,0){$2$};
\node[circle,draw,label={[shift={(-0.6,-0.6)}]$v_7$}] (h) at (0,1){$2$};
\node[circle,draw,label={[shift={(-0.6,-0.6)}]$v_6$}] (g) at (0,2.5){$2$};
\node[circle,draw,label={[shift={(0.6,-0.6)}]$v_2$}] (c) at (3.5,1){$2$};
\node[circle,draw,label={[shift={(0.6,-0.6)}]$v_3$}] (d) at (3.5,2.5){$2$};
\node[circle,draw,label={$v_5$}] (f) at (1,3.5){$2$};
\node[circle,draw,label={$v_4$}] (e) at (2.5,3.5){$2$};
\draw (a) edge (d);
\draw (a) edge (f);
\draw (a) edge (h);
\draw (a) edge (c);
\draw (a) edge (e);
\draw (a) edge (g);
\draw (c) edge (d);
\draw (c) edge (f);
\draw (c) edge (h);
\draw (d) edge (e);
\draw (d) edge (g);
\draw (e) edge (f);
\draw (e) edge (h);
\draw (f) edge (g);
\draw (g) edge (h);
\end{tikzpicture}
\qquad
$K_4=$
\begin{tikzpicture}[scale=0.8, baseline=(current bounding box.center)]
\node[circle,draw,label={[shift={(-0.6,-0.8)}]$v_1$}] (a) at (1,0){$2$};
\node[circle,draw,label={[shift={(0.6,-0.8)}]$v_2$}] (b) at (2.5,0){$2$};
\node[circle,draw,label={[shift={(-0.6,-0.6)}]$v_8$}] (h) at (0,1){$2$};
\node[circle,draw,label={[shift={(-0.6,-0.6)}]$v_7$}] (g) at (0,2.5){$2$};
\node[circle,draw,label={[shift={(0.6,-0.6)}]$v_3$}] (c) at (3.5,1){$2$};
\node[circle,draw,label={[shift={(0.6,-0.6)}]$v_4$}] (d) at (3.5,2.5){$2$};
\node[circle,draw,label={$v_6$}] (f) at (1,3.5){$2$};
\node[circle,draw,label={$v_5$}] (e) at (2.5,3.5){$2$};
\draw (a) edge (b);
\draw (a) edge (d);
\draw (a) edge (f);
\draw (a) edge (h);
\draw (b) edge (c);
\draw (b) edge (e);
\draw (b) edge (g);
\draw (c) edge (d);
\draw (c) edge (f);
\draw (c) edge (h);
\draw (d) edge (e);
\draw (d) edge (g);
\draw (e) edge (f);
\draw (e) edge (h);
\draw (f) edge (g);
\draw (g) edge (h);
\end{tikzpicture}
\end{center}

\end{prop}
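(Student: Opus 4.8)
\textbf{Proof proposal for Proposition \ref{samplegraphs}.}

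The plan is to exhibit, for each of the four graphs $K_1,\dots,K_4$, an explicit sequence of admissible contractions terminating in a singleton. All four graphs are simple (every edge has multiplicity $1$) with every vertex of weight $2$, so at the first step the only admissible value is $l=0$; this forces the first contraction to merge a pair of adjacent vertices $\{v,w\}$ with $\wt(v)\geq 1$, $\wt(w)\geq 2$ (automatic), $\deg(v)-1\geq 3$ and $\deg(w)-1\geq 3$, i.e. both endpoints must have degree at least $4$, while every \emph{other} vertex must have degree at least $3$. One checks that $K_1$ is the complete graph $K_5$ (every vertex has degree $4$), $K_2$ and $K_3$ are $4$-regular graphs on $6$ and $7$ vertices respectively, and $K_4$ is $4$-regular on $8$ vertices; in each case the degree hypotheses at the first step are satisfied for any choice of adjacent $\{v,w\}$. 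After merging, the new vertex $w'$ has weight $4$ and, since the two original endpoints shared some common neighbours, the new graph acquires multiple edges at $w'$, which is exactly what is needed to allow $l\geq 1$ in later steps.

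The main body of the argument is bookkeeping: I would carry out the contractions one graph at a time, at each stage recording the weight of every vertex, its degree, and — for the pair being contracted — the edge‑multiplicity $\#r^{-1}(\{v,w\})$, and then verify the three bullet conditions in the definition of admissible contraction for the chosen $l$. For $K_1=K_5$: contract an edge with $l=0$ to get a $4$-vertex graph with weights $(4,2,2,2)$ in which the weight‑$4$ vertex is joined to each of the others by a double edge and the three weight‑$2$ vertices still form a triangle; then contract the weight‑$4$ vertex against a weight‑$2$ neighbour with $l=1$ (permissible since $\wt=4\geq 2$ and the double edge gives $\#r^{-1}=2>l$), producing weights $(6,2,2)$ with high enough multiplicities; then finish with one more contraction with a suitable $l$ to reach the singleton. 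The graphs $K_2,K_3,K_4$ are handled by the same pattern: each admissible contraction strictly decreases the number of vertices and tends to increase both the weights and the edge‑multiplicities, so after enough steps the weight and multiplicity conditions become easy to satisfy and the degree‑$\geq 3$ conditions hold for free because the surviving vertices accumulate edges. I would present each sequence as a chain of labelled arrows in the same pictorial style as the Example, so that the verification of the three conditions at each arrow is visually immediate.

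The one genuinely delicate point — and the step I expect to be the main obstacle — is the \emph{first} contraction and the step or two immediately after it, because that is where the weights are smallest ($\wt=2$) and the constraint $\wt(v)\geq l+1$, $\wt(w)\geq l+2$ is tightest: with weights $2$ we are forced into $l=0$, so we \emph{must} contract along an edge both of whose endpoints have degree $\geq 4$ and all of whose other vertices have degree $\geq 3$. This is why the four sample graphs have been engineered to be (nearly) $4$-regular; the content of the proposition is precisely that such $4$-regular configurations admit \emph{some} valid order of contractions, and verifying that the intermediate degrees never drop below $3$ while the multiplicities grow fast enough to license the later values of $l$ requires choosing the contraction order with a little care (e.g. always contracting into the current highest-weight vertex). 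Once past the first two or three steps the weights are $\geq 6$ and the remaining graph has few vertices joined by thick multi-edges, so the termination is routine. I will also invoke the preceding lemma in spirit: it suffices to contract a convenient \emph{spanning submultigraph} at each stage, which lets me ignore "extra" edges and simplifies the bookkeeping, though for these small graphs a direct verification is cleanest.
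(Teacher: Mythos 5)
Your overall approach --- exhibiting explicit admissible contraction sequences, observing that weight $2$ forces $l=0$ and hence degree at least $4$ at both merging endpoints in the first step, and allowing yourself to pass to spanning submultigraphs --- is exactly the paper's. The problem is that for this proposition the finite verification \emph{is} the proof, and the proposal leaves it essentially undone: no sequence is given for $K_2$, $K_3$ or $K_4$ (the paper contracts $K_2$ directly in five steps, and handles $K_3$ and $K_4$ by two, respectively one, $l=0$ contractions followed by the spanning-submultigraph lemma, reducing them to $K_1$ and $K_3$; incidentally $K_3$ is not $4$-regular --- $v_1$ has degree $6$), and the one fragment you do work out, for $K_1$, ends with a continuation that is both miscounted and, if taken literally, inadmissible.

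Concretely: after your two steps ($l=0$, then the $l=1$ merge of the weight-$4$ vertex with a weight-$2$ neighbour, which is indeed admissible) you reach three vertices of weights $(6,2,2)$, with a triple edge from the weight-$6$ vertex to each weight-$2$ vertex and a single edge between the two weight-$2$ vertices. From there you need two more contractions, not ``one more'', and your suggested heuristic of ``always contracting into the current highest-weight vertex'' fails at precisely this step: for a heavy--light merge the light vertex has degree $4$ and multiplicity $3$ towards the heavy one, so $\deg(v)-\# r^{-1}(\{v,w\})+l\geq 3$ forces $l\geq 2$, while $\wt(v)=2$ forces $l\leq 1$. The only admissible move is to merge the two weight-$2$ vertices with $l=0$ (giving weights $(6,4)$ joined by six edges) and then finish with $l=3$; so your start is salvageable, but not by your stated strategy. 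This also refutes the claim that after the first steps the conditions ``hold for free'': the tension between small weights (which cap $l$) and growing multiplicities (which inflate $\# r^{-1}$) persists to the very end, which is why the paper's sequences keep merging low-weight pairs with $l=0$ and reserve the large values of $l$ for the last contractions. As written, the decisive bookkeeping is missing and the guiding principle offered for producing it is unreliable exactly where care is needed.
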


\begin{proof}
For simplicity, we will omit the name of vertices in all pictures.
 A successive admissible contraction of $K_1$ is illustrated as below:
\begin{flushleft}
\qquad
\begin{tikzpicture}[scale=0.8, baseline=(current bounding box.center)]
\node[circle,fill=yellow,draw] (a) at (1,0){$2$};
\node[circle,fill=yellow,draw] (b) at (3,0){$2$};
\node[circle,draw] (c) at (0,2){$2$};
\node[circle,draw] (d) at (4,2){$2$};
\node[circle,draw] (e) at (2, 3.5){$2$};
\draw (a) edge (b);
\draw (a) edge (c);
\draw (a) edge (d);
\draw (a) edge (e);
\draw (b) edge (c);
\draw (b) edge (d);
\draw (b) edge (e);
\draw (c) edge (d);
\draw (c) edge (e);
\draw (d) edge (e);
\end{tikzpicture}
$\xrightarrow{l=0}$
\begin{tikzpicture}[scale=0.8, baseline=(current bounding box.center)]
\node[circle,draw,label={[shift={(0.5,-0.6)}]$*$}] (a) at (2,0){$4$};
\node[circle,fill=yellow,draw] (c) at (0,2){$2$};
\node[circle,draw] (d) at (4,2){$2$};
\node[circle,fill=yellow,draw] (e) at (2, 3.5){$2$};
\draw (a) edge[bend left=8] (c);
\draw (a) edge[bend right=8] (c);
\draw (a) edge[bend left=8] (d);
\draw (a) edge[bend right=8] (d);
\draw (a) edge[bend left=8] (e);
\draw (a) edge[bend right=8] (e);
\draw (c) edge (d);
\draw (c) edge (e);
\draw (d) edge (e);
\end{tikzpicture}
$\xrightarrow{l=0}$
\begin{tikzpicture}[scale=0.8, baseline=(current bounding box.center)]
\node[circle,fill=yellow,draw] (a) at (2,0){$4$};
\node[circle,draw,label={[shift={(0,-0.1)}]$*$}] (c) at (0,2){$4$};
\node[circle,fill=yellow,draw] (d) at (4,2){$2$};
\draw (a) edge[bend left=8] (c);
\draw (a) edge[bend right=8] (c);
\draw (a) edge[bend left=20] (c);
\draw (a) edge[bend right=20] (c);
\draw (a) edge[bend left=8] (d);
\draw (a) edge[bend right=8] (d);
\draw (c) edge[bend left=8] (d);
\draw (c) edge[bend right=8] (d);
\end{tikzpicture}
\end{flushleft}

\begin{flushleft}
$\xrightarrow{l=1}$
\begin{tikzpicture}[scale=0.8, baseline=(current bounding box.center)]
\node[circle,fill=yellow,draw] (c) at (0,0){$4$};
\node[circle,fill=yellow,draw,label={[shift={(0.5,-0.6)}]$*$}] (d) at (4,0){$6$};

\draw (c) edge[bend left=6] (d);
\draw (c) edge[bend right=6] (d);
\draw (c) edge[bend left=16] (d);
\draw (c) edge[bend right=16] (d);
\draw (c) edge[bend left=24] (d);
\draw (c) edge[bend right=24] (d);
\end{tikzpicture}
$\xrightarrow{l=3}$
\begin{tikzpicture}[scale=0.8, baseline=(current bounding box.center)]
\node[circle,draw,label={[shift={(0.6,-0.6)}]$*$}] (c) at (0,0){$10$};
\end{tikzpicture}
\end{flushleft}
\medskip

 A successive admissible contraction of $K_2$ is illustrated as below:
\begin{flushleft}
\qquad
\begin{tikzpicture}[scale=0.8, baseline=(current bounding box.center)]
\node[circle,fill=yellow,draw] (a) at (1,0){$2$};
\node[circle,fill=yellow,draw] (b) at (2.7,0){$2$};
\node[circle,draw] (c) at (3.7,1.6){$2$};
\node[circle,draw] (d) at (2.7,3.2){$2$};
\node[circle,draw] (e) at (1,3.2){$2$};
\node[circle,draw] (f) at (0,1.7){$2$};
\draw (a) edge (b);
\draw (a) edge (c);
\draw (a) edge (e);
\draw (a) edge (f);
\draw (b) edge (c);
\draw (b) edge (d);
\draw (b) edge (f);
\draw (c) edge (d);
\draw (c) edge (e);
\draw (d) edge (e);
\draw (d) edge (f);
\draw (e) edge (f);
\end{tikzpicture}
$\xrightarrow{l=0}$
\begin{tikzpicture}[scale=0.8, baseline=(current bounding box.center)]
\node[circle,draw,label={[shift={(0.5,-0.6)}]$*$}] (a) at (1.85,0){$4$};
\node[circle,fill=yellow,draw] (c) at (3.7,1.6){$2$};
\node[circle,fill=yellow,draw] (d) at (2.7,3.2){$2$};
\node[circle,draw] (e) at (1,3.2){$2$};
\node[circle,draw] (f) at (0,1.6){$2$};
\draw (a) edge[bend left=8] (c);
\draw (a) edge (e);
\draw (a) edge[bend left=8] (f);
\draw (a) edge[bend right=8] (c);
\draw (a) edge (d);
\draw (a) edge[bend right=8] (f);
\draw (c) edge (d);
\draw (c) edge (e);
\draw (d) edge (e);
\draw (d) edge (f);
\draw (e) edge (f);
\end{tikzpicture}
$\xrightarrow{l=0}$
\begin{tikzpicture}[scale=0.8, baseline=(current bounding box.center)]
\node[circle,draw] (a) at (1.85,0){$4$};
\node[circle,draw,label={[shift={(0.5,-0.6)}]$*$}] (c) at (3.2,2.4){$4$};
\node[circle,fill=yellow,draw] (e) at (1,3.2){$2$};
\node[circle,fill=yellow,draw] (f) at (0,1.6){$2$};
\draw (a) edge[bend left=10] (c);
\draw (a) edge (e);
\draw (a) edge[bend left=8] (f);
\draw (a) edge[bend right=10] (c);
\draw (a) edge (c);
\draw (a) edge[bend right=8] (f);
\draw (c) edge[bend left=8] (e);
\draw (c) edge[bend right=8] (e);
\draw (c) edge (f);
\draw (e) edge (f);
\end{tikzpicture}
\end{flushleft}

\begin{flushleft}
$\xrightarrow{l=0}$
\begin{tikzpicture}[scale=0.8, baseline=(current bounding box.center)]
\node[circle,fill=yellow,draw] (a) at (1.85,0){$4$};
\node[circle,fill=yellow,draw] (c) at (3.2,2.4){$4$};
\node[circle,draw,label={[shift={(0,-0.1)}]$*$}] (e) at (0.5,2.4){$4$};

\draw (a) edge[bend left=10] (c);
\draw (a) edge (c);
\draw (a) edge[bend left=10] (e);
\draw (a) edge[bend right=10] (c);
\draw (a) edge (e);
\draw (a) edge[bend right=10] (e);
\draw (c) edge[bend left=10] (e);
\draw (c) edge[bend right=10] (e);
\draw (c) edge (e);
\end{tikzpicture}
$\xrightarrow{l=0}$
\begin{tikzpicture}[scale=0.8, baseline=(current bounding box.center)]
\node[circle,fill=yellow,draw] (c) at (0,0){$4$};
\node[circle,fill=yellow,draw,label={[shift={(0.5,-0.6)}]$*$}] (e) at (2.7,0){$8$};

\draw (c) edge[bend left=6] (e);
\draw (c) edge[bend right=6] (e);
\draw (c) edge[bend left=16] (e);
\draw (c) edge[bend right=16] (e);
\draw (c) edge[bend left=24] (e);
\draw (c) edge[bend right=24] (e);
\end{tikzpicture}
$\xrightarrow{l=3}$
\begin{tikzpicture}[scale=0.8, baseline=(current bounding box.center)]
\node[circle,draw,label={[shift={(0.6,-0.6)}]$*$}] (e) at (3,0){$12$};
\end{tikzpicture}
\end{flushleft}
\medskip

 A successive admissible contraction of $K_3$ is illustrated as below:
\begin{flushleft}
\qquad
\begin{tikzpicture}[scale=0.8, baseline=(current bounding box.center)]
\node[circle,draw] (a) at (1.75,0){$2$};
\node[circle,draw] (h) at (0,1){$2$};
\node[circle,draw] (g) at (0,2.5){$2$};
\node[circle,fill=yellow,draw] (c) at (3.5,1){$2$};
\node[circle,fill=yellow,draw] (d) at (3.5,2.5){$2$};
\node[circle,draw] (f) at (1,3.5){$2$};
\node[circle,draw] (e) at (2.5,3.5){$2$};
\draw (a) edge (d);
\draw (a) edge (f);
\draw (a) edge (h);
\draw (a) edge (c);
\draw (a) edge (e);
\draw (a) edge (g);
\draw (c) edge (d);
\draw (c) edge (f);
\draw (c) edge (h);
\draw (d) edge (e);
\draw (d) edge (g);
\draw (e) edge (f);
\draw (e) edge (h);
\draw (f) edge (g);
\draw (g) edge (h);
\end{tikzpicture}
$\xrightarrow{l=0}$
\begin{tikzpicture}[scale=0.8, baseline=(current bounding box.center)]
\node[circle,draw] (a) at (1.75,0){$2$};
\node[circle,draw] (h) at (0,1){$2$};
\node[circle,draw] (g) at (0,2.5){$2$};
\node[circle,draw,label={[shift={(0.5,-0.6)}]$*$}] (c) at (3.5,1.75){$4$};
\node[circle,fill=yellow,draw] (f) at (1,3.5){$2$};
\node[circle,fill=yellow,draw] (e) at (2.5,3.5){$2$};
\draw (a) edge (f);
\draw (a) edge (h);
\draw (a) edge[bend left=8] (c);
\draw (a) edge[bend right=8] (c);
\draw (a) edge (e);
\draw (a) edge (g);
\draw (c) edge (f);
\draw (c) edge (h);
\draw (c) edge (e);
\draw (c) edge (g);
\draw (e) edge (f);
\draw (e) edge (h);
\draw (f) edge (g);
\draw (g) edge (h);
\end{tikzpicture}
$\xrightarrow{l=0}$
\begin{tikzpicture}[scale=0.8, baseline=(current bounding box.center)]
\node[circle,draw] (a) at (1.75,0){$2$};
\node[circle,draw] (h) at (0,1){$2$};
\node[circle,draw] (g) at (0,2.5){$2$};
\node[circle,draw] (c) at (3.5,1.75){$4$};
\node[circle,draw,label={[shift={(0.5,-0.6)}]$*$}] (e) at (1.75,3.5){$4$};
\draw (a) edge[bend left=8] (e);
\draw (a) edge[bend right=8] (e);
\draw (a) edge (h);
\draw (a) edge[bend left=8] (c);
\draw (a) edge[bend right=8] (c);
\draw (a) edge (g);
\draw (c) edge[bend left=8] (e);
\draw (c) edge (h);
\draw (c) edge[bend right=8] (e);
\draw (c) edge (g);
\draw (e) edge (h);
\draw (e) edge (g);
\draw (g) edge (h);
\end{tikzpicture}
$=:K_3'$
\end{flushleft}
It is clear that $K_1$ is a spanning submultigraph of $K_3'$. Since $K_1$ is admissibly contractible, so is $K_3'$. Hence $K_3$ is also admissibly contractible.
\medskip

An admissible contraction of $K_4$ is illustrated as below:
\begin{flushleft}
\qquad
\begin{tikzpicture}[scale=0.8, baseline=(current bounding box.center)]
\node[circle,fill=yellow,draw] (a) at (1,0){$2$};
\node[circle,fill=yellow,draw] (b) at (2.5,0){$2$};
\node[circle,draw] (h) at (0,1){$2$};
\node[circle,draw] (g) at (0,2.5){$2$};
\node[circle,draw] (c) at (3.5,1){$2$};
\node[circle,draw] (d) at (3.5,2.5){$2$};
\node[circle,draw] (f) at (1,3.5){$2$};
\node[circle,draw] (e) at (2.5,3.5){$2$};
\draw (a) edge (b);
\draw (a) edge (d);
\draw (a) edge (f);
\draw (a) edge (h);
\draw (b) edge (c);
\draw (b) edge (e);
\draw (b) edge (g);
\draw (c) edge (d);
\draw (c) edge (f);
\draw (c) edge (h);
\draw (d) edge (e);
\draw (d) edge (g);
\draw (e) edge (f);
\draw (e) edge (h);
\draw (f) edge (g);
\draw (g) edge (h);
\end{tikzpicture}
$\xrightarrow{l=0}$
\begin{tikzpicture}[scale=0.8, baseline=(current bounding box.center)]
\node[circle,draw,label={[shift={(0.5,-0.6)}]$*$}] (a) at (1.75,0){$4$};
\node[circle,draw] (h) at (0,1){$2$};
\node[circle,draw] (g) at (0,2.5){$2$};
\node[circle,draw] (c) at (3.5,1){$2$};
\node[circle,draw] (d) at (3.5,2.5){$2$};
\node[circle,draw] (f) at (1,3.5){$2$};
\node[circle,draw] (e) at (2.5,3.5){$2$};
\draw (a) edge (d);
\draw (a) edge (f);
\draw (a) edge (h);
\draw (a) edge (c);
\draw (a) edge (e);
\draw (a) edge (g);
\draw (c) edge (d);
\draw (c) edge (f);
\draw (c) edge (h);
\draw (d) edge (e);
\draw (d) edge (g);
\draw (e) edge (f);
\draw (e) edge (h);
\draw (f) edge (g);
\draw (g) edge (h);
\end{tikzpicture}
$=:K_4'$
\end{flushleft}
It is clear that $K_3$ is a spanning submultigraph of $K_4'$. Since $K_3$ is admissibly contractible, so is $K_4'$. Hence $K_4$ is also admissibly contractible.
\end{proof}

\begin{lem} \label{contgraph1}
 Let $G$ be a vertex-weighted multigraph. Let $H$ be a submultigraph of $G$.
 Assume the following conditions:
 \begin{enumerate}[label=(\alph*)]
  \item $G$ is completely multipartite;
  \item $\wt_G\geq 2$;
  \item If $\{v_1,\cdots, v_s\}\subset V_H$ is a set of mutually non-adjacent vertices of $H$, then $s\leq \#V_H-4$.
 \end{enumerate}
Then there exists a successive admissible contraction $G'$ of $G$ such that
$H$ is a spanning submultigraph of $G'$.
\end{lem}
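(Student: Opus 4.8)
The plan is to induct on the number of ``extra'' vertices $\#V_G-\#V_H$, peeling off one of them at each step by an admissible contraction that absorbs it into the image of $V_H$. Write $m:=\#V_H$; applying hypothesis (c) to a one–element (vacuously non-adjacent) subset already gives $m\ge 5$. Fix once and for all an embedding $(\phi,\psi)$ exhibiting $H$ as a submultigraph of $G$, and put $S:=\phi(V_H)\subseteq V_G$, so that $\#S=m$ and $\#V_G\ge m$.

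The key preliminary observation is that (a) and (c) together force every vertex of $G$ to have reduced degree at least $4$. Since $G$ is completely multipartite, $V_G$ is partitioned into parts $P_1,\dots,P_t$, where each $P_j$ is an independent set and any two vertices lying in different parts are joined by an edge; thus a vertex of $P_j$ has reduced degree $\#V_G-\#P_j$. Because $\psi$ carries edges to edges with the prescribed endpoints, $\phi^{-1}(S\cap P_j)$ is a set of mutually non-adjacent vertices of $H$, so (c) yields $\#(S\cap P_j)\le m-4$ for every $j$. Combining this with $\sum_j\#(S\cap P_j)=m$ gives $\#P_j\le \#V_G-4$ for all $j$, hence every vertex of $G$ has reduced degree, and therefore degree, at least $4$.

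Now I run the induction. If $\#V_G=m$ then $\phi$ is a bijection on vertices, so $H$ is already a spanning submultigraph of $G$ and $G':=G$ works. Otherwise pick $x\in V_G\setminus S$, say $x\in P_a$; since $\#(S\cap P_a)\le m-4<m$ there is some $y\in S\setminus P_a$, and $x,y$ lie in different parts, hence are joined by $e:=\#r^{-1}(\{x,y\})\ge 1$ edges. Let $G_1$ be the contraction of $G$ with respect to $\{x,y\}$. I claim it is admissible with the choice $l=0$: indeed $0\le l<e$; every vertex other than $x,y$ has degree $\ge 4\ge 3$; $\wt(x),\wt(y)\ge 2$ by (b), so $\wt(x)\ge l+1$ and $\wt(y)\ge l+2$; and $\deg(x)-e+l\ge \rdeg(x)-1\ge 3$, and likewise for $y$. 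A direct check from the definitions shows that $G_1$ is again completely multipartite with $\wt_{G_1}\ge 2$, and that composing $(\phi,\psi)$ with the contraction maps realizes $H$ as a submultigraph of $G_1$ (injectivity on vertices survives because $x\notin S$, and weights only increase under contraction). Since $\#V_{G_1}=\#V_G-1$ and hypothesis (c) is a statement about $H$ alone, the induction hypothesis applies to the pair $(G_1,H)$, producing a successive admissible contraction $G'$ of $G_1$ with $H$ a spanning submultigraph of $G'$; prepending $G\to G_1$ gives the required contraction of $G$.

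The only delicate point is the degree estimate in the second paragraph. A priori, along a contraction sequence either the bystander requirement $\deg(x)\ge 3$ for $x\neq v,w$, or the requirement $\deg(v)-\#r^{-1}(\{v,w\})+l\ge 3$, could fail — for instance if some part of the evolving graph becomes large relative to the total number of vertices. What rescues the argument is that hypothesis (c) is inherited verbatim at every stage of the induction and, together with complete multipartiteness (which contraction preserves), it pins the reduced degree of every vertex to be $\ge 4$ throughout the process. This uniform bound is precisely what makes the trivial choice $l=0$ admissible at every step, so no finer bookkeeping of weights or edge multiplicities is ever needed.
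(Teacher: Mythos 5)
Your proof is correct and follows essentially the same route as the paper: induction on $\#V_G-\#V_H$, contracting one extra vertex into the image of $V_H$ with $l=0$, after noting that (a) and (c) force every vertex of $G$ (and of each intermediate graph) to have reduced degree at least $4$. The only cosmetic difference is that you extract the degree bound from the multipartite partition, whereas the paper argues vertex-by-vertex via the set of non-neighbors; the checks of admissibility and of the preserved hypotheses match the paper's argument.
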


\begin{proof}
 We do induction on $q:=\#(V_G\setminus V_H)$.
 If $q=0$, then the lemma is proved by taking $G':=G$. 
 Assume that the lemma is proved for $q-1$. Let $w\in V_G$
 be an arbitrary vertex of $G$. Let $\{v_1,\cdots,v_{s(w)}\}$ be the set of all vertices in $V_H$ that are not adjacent to $w$ in $G$. Since $G$ is completely multipartite, $\{v_1,\cdots,v_{s(w)}\}$ is a set of mutually non-adjacent vertices of $G$ (hence of $H$). By assumption, we have $s(w)\leq \#V_H -4$. This implies that $\rdeg_G(w)\geq 4$ for any 
 vertex $w$ of $G$. Let us pick a vertex $w\in V_G\setminus V_H$, then $w$ is adjacent to a vertex $v\in V_H$. Let $G_1$ be the contraction of $G$ with respect to $\{v,w\}$. Since $\wt_G\geq 2$ and each vertex of $G$ have reduced degree $\geq 4$, $G_1$ is an admissble contraction of $G$ when $l=0$. It is clear that $H$ is a also submultigraph of $G_1$ with $q-1=\#(V_{G_1}\setminus V_H)$, $\wt_{G_1}\geq 2$, and $G_1$ is also a completely multipartite. By the inductive hypothesis, there exists a successive admissible contraction $G_1'$ of $G_1$ such that $H$ is a spanning submultigraph of $G_1'$. The proof is finished by taking $G':=G_1'$.
\end{proof}

\begin{rem}
It is easy to verify that $H=K_i$ satisfies assumption (c) of Lemma \ref{contgraph1} for each $1\leq i\leq 4$. 
\end{rem}

\begin{lem}\label{contgraph2}
Let $G$ be a completely multipartite vertex-weighted multigraph. Assume that for any vertex $v$ of $G$ we have $\rdeg(v)\geq 4$ and
 $\wt(v)\geq 2$.
Then $G$ is admissibly contractible.
\end{lem}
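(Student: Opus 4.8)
The plan is to reduce Lemma \ref{contgraph2} to Proposition \ref{samplegraphs} via Lemma \ref{contgraph1}. First I would record the standard structure of a completely multipartite multigraph: the relation ``$v=w$ or $v$ and $w$ are non-adjacent'' is an equivalence relation on $V_G$, so $V_G$ decomposes into nonempty \emph{parts} $Q_1,\dots,Q_k$ with no edges inside a part and at least one edge between any two vertices lying in different parts. Put $n=\#V_G$ and $m_j=\#Q_j$, ordered so that $m_1\geq m_2\geq\cdots\geq m_k\geq 1$. For $v\in Q_j$ one has $\rdeg(v)=n-m_j$, so the hypothesis $\rdeg\geq 4$ says exactly $m_j\leq n-4$ for all $j$; equivalently $k\geq 2$ and $m_2+\cdots+m_k=n-m_1\geq 4$. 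I would also observe, reading off the pictures in Proposition \ref{samplegraphs}, that $K_1$ is the complete graph on $5$ vertices, $K_2$ the complete multipartite graph with three parts of size $2$, $K_3$ the complete multipartite graph with part sizes $1,3,3$, and $K_4$ the complete bipartite graph with two parts of size $4$, all with every vertex weighted $2$.

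Next I would establish an elementary embedding criterion: for a fixed $i\in\{1,2,3,4\}$, if one can choose pairwise disjoint families $B_1,\dots,B_t$ of parts of $G$, where $t$ is the number of parts of $K_i$, such that $\sum_{Q\in B_s}\#Q$ is at least the size of the $s$-th part of $K_i$ for every $s$, then $K_i$ is a submultigraph of $G$. The proof is immediate: inject the $s$-th part of $K_i$ into $\bigcup_{Q\in B_s}Q\subset V_G$; an edge of $K_i$ joins two vertices in distinct parts of $K_i$, hence is carried to a pair of vertices of $G$ lying in parts drawn from two disjoint families $B_s,B_{s'}$, hence in different parts of $G$, hence joined by an edge of $G$. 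Choosing one such edge for each edge of $K_i$ gives the edge map (injective, since the vertex map is and $K_i$ is simple), and the weight condition holds because $\wt_{K_i}\equiv 2\leq\wt_G$.

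The core of the argument is a short finite check that such a family always exists. If $m_1\geq 4$, then $B_1:=\{Q_1\}$ (total size $m_1\geq 4$) and $B_2:=\{Q_2,\dots,Q_k\}$ (total size $n-m_1\geq 4$) realize $K_4=K_{4,4}$. Otherwise $m_1\leq 3$, so every $m_j\leq 3$, and $m_2+\cdots+m_k\geq 4$ forces $k\geq 3$. If $k\geq 5$, the singletons $\{Q_1\},\dots,\{Q_5\}$ realize $K_1=K_5$. If $k=4$, then $m_2+m_3+m_4\geq 4$ forces $m_2\geq 2$ (else $m_2=m_3=m_4=1$ and the sum is $3$), so $\{Q_1\},\{Q_2\},\{Q_3,Q_4\}$, of total sizes $\geq 2,\geq 2,\geq 2$, realize $K_2=K_{2,2,2}$. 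If $k=3$, then $m_2+m_3\geq 4$: when $m_3\geq 2$ all three parts have size $\geq 2$ and $\{Q_1\},\{Q_2\},\{Q_3\}$ realize $K_2=K_{2,2,2}$; when $m_3=1$ one gets $m_2\geq 3$, forcing $m_1=m_2=3$, and $\{Q_1\},\{Q_2\},\{Q_3\}$, of sizes $3,3,1$, realize $K_3=K_{1,3,3}$.

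Finally, with some $K_i$ realized as a submultigraph of $G$, I would invoke Lemma \ref{contgraph1} with $H=K_i$: its hypotheses (a) and (b) are exactly the hypotheses of the present lemma, and (c) holds for $H=K_i$ by the Remark after Lemma \ref{contgraph1}. This produces a successive admissible contraction $G'$ of $G$ in which $K_i$ is a spanning submultigraph; since $K_i$ is admissibly contractible by Proposition \ref{samplegraphs}, the lemma on spanning submultigraphs gives that $G'$ is admissibly contractible, and concatenating a contraction sequence from $G$ to $G'$ with one from $G'$ to a singleton shows $G$ is admissibly contractible. I expect the only real work to be the case check of Step 2: the sole configuration that could miss the first three target graphs, namely $k=3$ with $m_3=1$, is pinned to exactly $(3,3,1)$ by $m_2+m_3\geq 4$ and is caught by $K_3$. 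Everything else is bookkeeping around the embedding criterion and appeals to the already-established Lemma \ref{contgraph1} and Proposition \ref{samplegraphs}.
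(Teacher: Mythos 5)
Your proof is correct and takes essentially the same route as the paper: reduce to the four model graphs $K_1,\dots,K_4$ (identified as $K_5$, $K_{2,2,2}$, $K_{1,3,3}$, $K_{4,4}$ with weights $2$) by a case analysis on the multipartite part sizes, then conclude via Lemma \ref{contgraph1}, the spanning-submultigraph lemma, and Proposition \ref{samplegraphs}. The only difference is organizational: you split first on whether the largest part has size $\geq 4$ (grouping the remaining parts to embed $K_{4,4}$), while the paper splits on the number of parts $k$; both case checks are exhaustive and the underlying argument is the same.
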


\begin{proof}
 Since $G$ is completely multipartite, there exists a partition
 of vertices $V=\cup_{i=1}^k V_k$ such that two vertices are non-adjacent if and only if they belong to the same $V_i$. Denote $a_i:= \# V_i$. For simplicity we may assume that $a_1\leq a_2\leq \cdots \leq a_k$. Then $\rdeg(v)\geq 4$ implies $\sum_{i=1}^{k-1} a_i\geq 4$. In particular, $k\geq 2$. 
 
 We divide the proof into five cases based on values of $k$ and $a_1$. We will use Lemma \ref{contgraph1} in all cases. Since $G$ satisfies assumptions (a)(b) of Lemma \ref{contgraph1}, we only need to verify assumption (c).
 \smallskip
 
 \textbf{Case 1.} $k\geq 5$.
 
 Let us pick $v_i\in V_i$ for $1\leq i\leq 5$. Let $H$ be the submultigraph
 of $G$ generated by $\{v_1,\cdots,v_5\}$.
 Since $\{v_1,\cdots,v_5\}$ are mutually adjacent in $G$, $K_1$ is a spanning submultigraph of $H$. Hence $H$ satisfies condition (c). By Lemma \ref{contgraph1}, there exists a successive admissible contraction $G'$ of $G$ such that $H$ (hence $K_1$) is a spanning submultgraph of $G'$. By Proposition \ref{samplegraphs}, $K_1$ is admissibly contractible, hence $G$ is admissibly contractible.
 \smallskip
 
 \textbf{Case 2.} $k=4$.
 
 Since $\sum_{i=1}^3 a_i\geq 4$, we have that $a_1,a_2\geq 1$ and $a_3,a_4\geq 2$. Let us pick $v_1\in V_1$, $v_4\in V_2$, $v_2, v_5\in V_3$
 and $v_3, v_6\in V_4$. Let $H$ be the submultigraph of $G$ generated by $\{v_1,\cdots,v_6\}$. It is easy to see that $K_2$ is a spanning submultigraph of $H$, hence $H$ satisfies condition (c). By Lemma \ref{contgraph1}, there exists a successive admissible contraction $G'$ of $G$ such that $H$ (hence $K_2$) is a spanning submultgraph of $G'$. By Proposition \ref{samplegraphs}, $K_2$ is admissibly contractible, hence $G$ is admissibly contractible.
 \smallskip
 
 \textbf{Case 3.} $k=3$ and $a_1\geq 2$.
 
 We know that $a_2,a_3\geq a_1\geq 2$. Let us pick $v_1,v_4\in V_1$, $v_2,v_5\in V_2$ and $v_3,v_6\in V_3$. Let $H$ be the submultigraph of $G$ generated by $\{v_1,\cdots,v_6\}$. It is easy to see that $K_2$ is a spanning submultigraph of $H$, hence $H$ satisfies condition (c). By Lemma \ref{contgraph1}, there exists a successive admissible contraction $G'$ of $G$ such that $H$ (hence $K_2$) is a spanning submultgraph of $G'$. By Proposition \ref{samplegraphs}, $K_2$ is admissibly contractible, hence $G$ is admissibly contractible.
 \smallskip
 
 \textbf{Case 4.} $k=3$ and $a_1=1$.
 
 Since $a_1+a_2\geq 4$, we have $a_2,a_3\geq 3$. Let us pick
 $v_1\in V_1$, $v_2,v_4,v_6\in V_2$ and $v_3,v_5,v_7\in V_3$. Let $H$ be the submultigraph of $G$ generated by $\{v_1,\cdots,v_7\}$. It is easy to see that $K_3$ is a spanning submultigraph of $H$, hence $H$ satisfies condition (c). By Lemma \ref{contgraph1}, there exists a successive admissible contraction $G'$ of $G$ such that $H$ (hence $K_3$) is a spanning submultgraph of $G'$. By Proposition \ref{samplegraphs}, $K_3$ is admissibly contractible, hence $G$ is admissibly contractible.
 \smallskip
 
 \textbf{Case 5.} $k=2$.
 
 Since $a_1\geq 4$, we have $a_1,a_2\geq 4$. Let us pick $v_1,v_3,v_5,v_7\in V_1$ and $v_2,v_4,v_6,v_8\in V_2$. Let $H$ be the submultigraph of $G$ generated by $\{v_1,\cdots,v_8\}$. It is easy to see that $K_4$ is a spanning submultigraph of $H$, hence $H$ satisfies condition (c). By Lemma \ref{contgraph1}, there exists a successive admissible contraction $G'$ of $G$ such that $H$ (hence $K_4$) is a spanning submultgraph of $G'$. By Proposition \ref{samplegraphs}, $K_4$ is admissibly contractible, hence $G$ is admissibly contractible.
\end{proof}

\section{Zaidenberg's method}\label{zai_sec}

\begin{defn}\label{adm_def}
Let $(X,\Delta)$ be a log smooth projective surface pair.
Let $C$ be a reduced curve in $X$. Let $\{\Gamma_t\}_{t\in\bD}$ be a holomorphic flat family of reduced divisors on $X$. Denote by $\Gamma\subset X\times\bD$ the development of $\{\Gamma_t\}_{t\in\bD}$.
We say that $\{\Gamma_t\}_{t\in\bD}$ is an \textit{admissible deformation} of $C$ if $\Gamma_0=C$ and the set $\Gamma_0^{*}:=\{x\in \Gamma_0\mid \Gamma\textrm{ is locally analytically irreducible at }(x,0)\}$ is Brody hyperbolic.
If, moreover, $\Delta+C$ is normal crossing, an admissible deformation $\{\Gamma_t\}_{t\in\bD}$ of $C$ is \textit{nodal} if $\Delta+\Gamma_t$ is normal crossing for any $t\in\bD$. Besides, we say that $\{\Gamma_t^{(j)}\}_{t\in\bD, 1\leq j\leq k}$ is a \emph{successive admissible deformation} of $C$ if for each $1\leq j\leq k$ there exists $t_j\in\bD\setminus\{0\}$, such that $\{\Gamma_t^{(j)}\}_{t\in\bD}$ is an admissible deformation of $\Gamma_{t_{j-1}}^{(j-1)}$ where $\Gamma_{t_0}^{(0)}:=C$.
\end{defn}

The following lemma is a generalization of Zaidenberg's result \cite[Lemma-Definition 3.2]{zai89} to surface pairs. 

\begin{lem}\label{zai}
 Let $(X,\Delta)$ be a log smooth projective 
 surface pair. Let $C$ be a reduced curve in $X$ 
 such that $\Delta+C$ is normal crossing.
 Let $\{\Gamma_t\}_{t\in\bD}$ be an \textit{admissible deformation} of $C$.
 If $X\setminus(C\setminus\Delta)$ is Brody hyperbolic,
 then $X\setminus(\Gamma_t\setminus\Delta)$ is also Brody hyperbolic for any $0<|t|\ll 1$.
 (Note that $X\setminus(C\setminus\Delta)$ being Brody hyperbolic is the same as saying that $X\setminus C$ has \emph{the property of hyperbolic non-percolation through} $C\cap\Delta$ according to \cite{sz02}.)
\end{lem}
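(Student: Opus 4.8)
The plan is to argue by contradiction, following Zaidenberg's original strategy, and to feed the hypothesis on $\Gamma_0^*$ into the stability of intersections (Lemma \ref{stabint}). Suppose the conclusion fails: then there is a sequence $t_n\to 0$ with $t_n\neq 0$ and non-constant entire curves $\phi_n:\bC\to X$ with $\phi_n(\bC)\subset X\setminus(\Gamma_{t_n}\setminus\Delta)$, i.e. $\phi_n(\bC)\cap\Gamma_{t_n}\subset\Delta$. By the Brody reparametrization lemma, after replacing $\phi_n$ by $\phi_n\circ r_n$ for suitable affine reparametrizations $r_n$ and passing to a subsequence, $\phi_n$ converges locally uniformly to a non-constant entire curve $\phi_0:\bC\to X$. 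The goal is to show $\phi_0(\bC)\subset X\setminus(C\setminus\Delta)$, i.e. $\phi_0(\bC)\cap C\subset\Delta$, contradicting the assumed Brody hyperbolicity of $X\setminus(C\setminus\Delta)$.

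First I would treat the case $\phi_0(\bC)\not\subset\Supp(\Gamma)$ (equivalently $\phi_0(\bC)$ is not contained in a component of $C=\Gamma_0$; here one uses that $\Gamma\to\bD$ is flat so the total space $\Gamma\subset X\times\bD$ has pure codimension one and the slices $\Gamma_t$ are its fibers). Apply Lemma \ref{stabint} to the total space: for the Weil divisor $\Gamma$ in $X\times\bD$ and the entire curves $z\mapsto(\phi_n(z),t_n)$ converging to $z\mapsto(\phi_0(z),0)$, we get that any point of $(\phi_0(\bC)\times\{0\})\cap\Gamma^\circ$ lies in the limit of the intersection points $(\phi_n(\bC)\times\{t_n\})\cap\Supp(\Gamma)$. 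Now $(\phi_n(\bC)\times\{t_n\})\cap\Supp(\Gamma)=\phi_n(\bC)\cap\Gamma_{t_n}$, which by hypothesis lies in $\Delta$ (times $\{t_n\}$); since $\Delta$ is closed this forces the limit points, and hence $\phi_0(\bC)\cap(\text{locus where }\Gamma\text{ is }\bQ\text{-Cartier at }(x,0))$, to lie in $\Delta$. The set of $x\in C=\Gamma_0$ at which $\Gamma$ fails to be $\bQ$-Cartier near $(x,0)$ is exactly the set where $\Gamma$ is not locally analytically irreducible at $(x,0)$ — more precisely, $\Gamma$ being a reduced surface (total space of a flat family of reduced curves), local $\bQ$-Cartierness of the divisor $\Gamma_0$ on the surface-pair total space is controlled by the number of analytic branches of $\Gamma$ through $(x,0)$. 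So outside $\Gamma_0^*$ we have $\phi_0(\bC)\cap C\subset\Delta$; it remains to show $\phi_0(\bC)$ does not meet $C\cap(\Gamma_0\setminus\Gamma_0^*)$ outside $\Delta$. Here I would use that $\Gamma_0^*$ is Brody hyperbolic by the admissibility hypothesis: if $\phi_0(\bC)$ met $\Gamma_0\setminus\Gamma_0^*$ in a point outside $\Delta$, one combines the preceding intersection control with the structure of $\Gamma$ near such a reducible point (two or more branches of the total space, at least one of which is "horizontal") to derive either a containment of $\phi_0(\bC)$ in a component of $C$ (excluded in this case) or a contradiction via the $\bQ$-Cartier analysis again applied along each branch.

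Next I would handle the case $\phi_0(\bC)\subset\Supp(\Gamma)$, i.e. $\phi_0(\bC)\subset C_k$ for some irreducible component $C_k$ of $C$. Then by the stability of intersections applied componentwise to the other components $\Gamma^{(j)}$ of the total space (those not containing $\phi_0(\bC)\times\{0\}$), the same argument shows $\phi_0(\bC)$ meets every other component of $C$, and the locus $\Gamma_0\setminus\Gamma_0^*$, only inside $\Delta$. Since by normal crossing $C_k\cap(C\setminus C_k)$ together with $C_k\cap\Delta$ is a finite set contained in the normal crossing divisor, and $C_k$ minus this finite set is an open subset of $C_k$: one checks that $\phi_0(\bC)$ must land in $C_k$ minus the points of $C_k$ where $\Gamma$ is reducible in the total space, and this punctured curve is contained in $\Gamma_0^*\cup(\text{finitely many points of }\Delta)$; Brody hyperbolicity of $\Gamma_0^*$ (a hyperbolic curve component stays hyperbolic after removing finitely many points) then forces $\phi_0$ to be constant, a contradiction. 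Combining the two cases, $\phi_0(\bC)\cap C\subset\Delta$, which is the desired contradiction.

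The main obstacle I expect is the precise bookkeeping identifying the "bad set" $\Gamma_0\setminus\Gamma_0^*$ (points where the total space $\Gamma$ is locally analytically reducible at $(x,0)$) with exactly the set one cannot control by stability of intersections, and making sure the $\bQ$-Cartier hypothesis in Lemma \ref{stabint} is verified at all the right points: on the smooth surface $X$ the family $\{\Gamma_t\}$ may acquire singular or reducible total space, so $\Gamma\subset X\times\bD$ need not be smooth, and one must be careful that a Weil divisor of the form $\Gamma_0$ on the (possibly singular) threefold-slice... rather, on the surface total space is $\bQ$-Cartier precisely away from the analytically reducible points. A secondary technical point is the reduction, via normal crossing of $\Delta+C$, allowing us to pass freely between "$\phi_0(\bC)\cap C\subset\Delta$" and "$\phi_0(\bC)\subset X\setminus(C\setminus\Delta)$", and to discard the finitely many points of $C\cap\Delta$ when invoking hyperbolicity of the curve $\Gamma_0^*$.
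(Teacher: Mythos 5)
Your overall framework --- argue by contradiction, take a Brody limit $\phi_0$ of the curves $\phi_n$ with $\phi_n(\bC)\cap\Gamma_{t_n}\subset\Delta$, lift everything to $X\times\bD$, apply stability of intersections (Lemma \ref{stabint}) to the total space $\Gamma$, and split according to whether $\phi_0(\bC)$ lies in $\Gamma_0=C$ --- is the same as the paper's. But one of your key identifications is wrong: since $X\times\bD$ is smooth, the divisor $\Gamma$ is Cartier at \emph{every} point, so the $\bQ$-Cartier locus in Lemma \ref{stabint} is all of $\Supp(\Gamma)$ and has nothing to do with the local analytic irreducibility locus $\Gamma_0^*$. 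This makes your first case (where $\phi_0(\bC)\not\subset\Gamma$) immediate --- stability alone gives $\phi_0(\bC)\cap C\subset\Delta$, contradicting hyperbolicity of $X\setminus(C\setminus\Delta)$, with no appeal to $\Gamma_0^*$ needed --- so the patch you sketch there is unnecessary, though not fatal.

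The genuine gap is in the case $\phi_0(\bC)\subset C$. You propose to control $\phi_0(\bC)$ by applying stability of intersections to the \emph{global} irreducible components of $\Gamma$ not containing $\phi_0(\bC)\times\{0\}$, and then to conclude that $\phi_0(\bC)$ avoids $\Gamma_0\setminus\Gamma_0^*$ outside $\Delta$. This mechanism fails, because $\Gamma$ can be globally irreducible while being locally analytically reducible at points of $\Gamma_0\setminus\Gamma_0^*$; this is exactly the situation in the deformations the lemma is later applied to (the developments in Lemmas \ref{primitive} and \ref{hypsmooth} are irreducible but have two analytic branches at each preserved node). At such points there is no other global component to feed into Lemma \ref{stabint}, so you cannot rule out that $\phi_0(\bC)$ passes through them, and your subsequent appeal to hyperbolicity of $\Gamma_0^*$ does not close (note also that $\Gamma_0^*$ together with finitely many extra points need not be hyperbolic, so that step is not legitimate as stated). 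The paper's argument is local instead: since $\Gamma_0^*$ is hyperbolic, $\phi_0(\bC)$ must contain a point $p\in\Gamma_0\setminus\Gamma_0^*$; since $\Gamma_0\setminus\Gamma_0^*\subset\Sing(C)$ and $\Delta+C$ is normal crossing, $p\notin\Delta$; at $(p,0)$ one chooses a local analytic branch $\Sigma_1$ of $\Gamma$ not containing the lifted limit curve (possible because $\Gamma_0$ is reduced, so two branches meet $X\times\{0\}$ only at $(p,0)$), and a local Hurwitz/stability argument forces $\psi_n(\bC)$ to meet $\Sigma_1$ near $(p,0)$ for $n\gg 0$, i.e.\ $\phi_n(\bC)$ meets $\Gamma_{t_n}\setminus\Delta$, contradicting the choice of $\phi_n$. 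This local-branch step is the essential missing idea; your sentence about ``the structure of $\Gamma$ near such a reducible point'' gestures at it, but the global-component argument you actually rely on cannot deliver it.
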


\begin{proof}
The proof is similar to \cite[Proof of Lemma-Definition 3.2]{zai89}.
\end{proof}

\begin{lem}\label{smoothing}
 Let $X$ be a smooth projective rational surface. Let $C_1$, $C_2$ be two intersecting rational nodal curves such that $C_1+C_2$ is normal crossing. 
Assume that $(-K_X\cdot C_1)\geq l+1$ and $(-K_X\cdot C_2)\geq l+2$ for some non-negative integer $l< (C_1\cdot C_2)$. Then for any subset $\{x_1,\cdots,x_l\}\subset C_1\cap C_2$, there exists a holomorphic flat family of divisors $\{\Gamma_t\}$ in $X$ such that $\Gamma_0=C_1+C_2$ and $\Gamma_t$ is an irreducible rational nodal curve singular at $x_i$ for any $t\neq 0$ and any $1\leq i\leq l$.
\end{lem}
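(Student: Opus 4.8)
The plan is to reduce, via a blow-up, to smoothing a chain of two rational curves into a single rational curve, and then to invoke the standard theory of smoothing rational trees; the only real input will be a cohomology vanishing, which I reduce to the numerical hypotheses. First I would blow up the $l$ distinct points $x_1,\dots,x_l\in C_1\cap C_2$: let $\pi\colon\hat X\to X$ be this blow-up, with exceptional curves $E_1,\dots,E_l$ (if $l=0$, take $\hat X=X$). As $C_1+C_2$ is normal crossing, each $x_j$ is a smooth point of both $C_1$ and $C_2$, so the strict transforms satisfy $\hat C_i=\pi^*C_i-\sum_j E_j$; each $\hat C_i$ is again a rational nodal curve, with normalization $g_i\colon\bP^1\to\hat X$ an immersion, and $\hat C_1+\hat C_2$ is normal crossing on $\hat X$. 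An intersection computation gives $-K_{\hat X}\cdot\hat C_i=(-K_X\cdot C_i)-l\geq1$, $\hat C_1\cdot\hat C_2=(C_1\cdot C_2)-l\geq1$, and $[\hat C_1]+[\hat C_2]=\pi^*[C_1+C_2]-2\sum_j E_j$. Consequently, any irreducible curve $\hat\Gamma$ on $\hat X$ numerically equivalent to $\hat C_1+\hat C_2$ which is smooth along $\bigcup_j E_j$ and meets each $E_j$ transversally in two distinct points pushes forward to an irreducible curve $\Gamma:=\pi(\hat\Gamma)\in|C_1+C_2|$ that is isomorphic to $\hat\Gamma$ away from $\{x_1,\dots,x_l\}$ and has an ordinary node at each $x_j$. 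So it suffices to build a holomorphic flat family of effective divisors $\{\hat\Gamma_t\}_{t\in\bD}$ on $\hat X$ with $\hat\Gamma_0=\hat C_1+\hat C_2$ such that, for $0<|t|\ll1$, $\hat\Gamma_t$ is an irreducible rational nodal curve, smooth along $\bigcup_j E_j$ and meeting each $E_j$ transversally in two distinct points; then $\Gamma_t:=\pi(\hat\Gamma_t)$ is the required family.

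Next I would fix a point $\hat x_0\in\hat C_1\cap\hat C_2$ (possible since $\hat C_1\cdot\hat C_2\geq1$); there $\hat C_1$ and $\hat C_2$ cross transversally with both branches smooth. Let $W_0$ be the chain of two $\bP^1$'s obtained from $\bP^1\sqcup\bP^1$ by gluing $\hat p_1:=g_1^{-1}(\hat x_0)$ to $\hat p_2:=g_2^{-1}(\hat x_0)$; the $g_i$ glue to an unramified morphism $h_0\colon W_0\to\hat X$ with image $\hat C_1\cup\hat C_2$. The heart of the argument is the vanishing $H^1(W_0,h_0^*T_{\hat X})=0$. To prove it, note that the restriction of $h_0^*T_{\hat X}$ to the $i$-th component is $g_i^*T_{\hat X}$, a rank-$2$ bundle on $\bP^1$ of degree $e_i:=-K_{\hat X}\cdot\hat C_i\geq1$; write it as $\cO(a_i)\oplus\cO(b_i)$ with $a_i\leq b_i$. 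Since $g_i$ is an immersion there is an inclusion $T_{\bP^1}=\cO(2)\hookrightarrow g_i^*T_{\hat X}$ with invertible cokernel; if $a_i\leq-2$ this inclusion would land in the $\cO(b_i)$-factor, forcing the (invertible) cokernel to absorb the torsion sheaf $\cO(b_i)/\cO(2)$, hence $b_i=2$ and $a_i=e_i-2\geq-1$ — a contradiction. Thus $a_i\geq-1$, so $H^1(\bP^1,g_i^*T_{\hat X})=0$; and when $a_i=-1$ we have $b_i=e_i+1\geq2$ and $T_{\bP^1}$ lies in the $\cO(b_i)$-factor, so the image of the evaluation $H^0(\bP^1,g_i^*T_{\hat X})\to(T_{\hat X})_{\hat x_0}$ at $\hat p_i$ equals the tangent line $T_{\hat x_0}\hat C_i$; when $a_i\geq0$ that evaluation is surjective. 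As $\hat C_1$ and $\hat C_2$ meet transversally at $\hat x_0$, in all cases the two evaluation images together span $(T_{\hat X})_{\hat x_0}$. Feeding this and $H^1(\bP^1,g_i^*T_{\hat X})=0$ into the long exact cohomology sequence of the normalization sequence
\[ 0\to h_0^*T_{\hat X}\to g_1^*T_{\hat X}\oplus g_2^*T_{\hat X}\to(T_{\hat X})_{\hat x_0}\to0, \]
whose last map is $(\sigma_1,\sigma_2)\mapsto\sigma_1(\hat p_1)-\sigma_2(\hat p_2)$, yields $H^1(W_0,h_0^*T_{\hat X})=0$.

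With this vanishing, the deformation theory of maps from nodal curves (cf.\ \cite[II.1 and II.7]{kol96}) shows that deformations of the pair $(W_0,h_0)$ are unobstructed and that every deformation of the domain $W_0$ lifts to a deformation of the map; I would then choose a holomorphic arc $\{[h_t]\}_{t\in\bD}$ through $[h_0]$ along which the node of $W_0$ is smoothed, obtaining morphisms $h_t\colon\bP^1\to\hat X$, birational onto their images, with $h_{t*}[\bP^1]=[\hat C_1]+[\hat C_2]$. Passing to images (and shrinking $\bD$) yields a holomorphic flat family of effective divisors $\{\hat\Gamma_t\}_{t\in\bD}$ on $\hat X$ with $\hat\Gamma_0=\hat C_1+\hat C_2$. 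For $0<|t|\ll1$: $\hat\Gamma_t$ is rational, being the birational image of the smooth fiber $\bP^1$, and irreducible, since smoothing the node $\hat x_0$ merges the two components of $\hat\Gamma_0$; it is nodal, because a small deformation of the nodal curve $\hat C_1+\hat C_2$ stays nodal (near each node it remains a node or becomes smooth, and away from the nodes it is smooth); and since $\Sing(\hat C_1+\hat C_2)$ is disjoint from $\bigcup_j E_j$ while $\hat\Gamma_0\cap E_j=\{\hat C_1\cap E_j,\ \hat C_2\cap E_j\}$ is a transversal intersection in two distinct points, the same persists for $\hat\Gamma_t$. By the reduction of the first paragraph, $\{\Gamma_t:=\pi(\hat\Gamma_t)\}_{t\in\bD}$ is a holomorphic flat family with $\Gamma_0=C_1+C_2$ and, for $t\neq0$, $\Gamma_t$ an irreducible rational nodal curve with a node at each $x_i$.

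The one genuinely subtle point is the vanishing $H^1(W_0,h_0^*T_{\hat X})=0$: the balancedness bound $a_i\geq-1$ is exactly where the hypothesis $-K_X\cdot C_i\geq l+1$ (equivalently $e_i\geq1$ after the blow-up) enters, and the spanning of $(T_{\hat X})_{\hat x_0}$ by the two evaluation images is where the normal crossing assumption — transversality of $\hat C_1$ and $\hat C_2$ at the gluing point $\hat x_0$ — is used. Everything else is standard: the deformation theory of maps from nodal curves, and the elementary observation that small deformations of a nodal curve remain nodal.
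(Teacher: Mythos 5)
Your proposal is correct and follows essentially the same route as the paper's proof: blow up at the chosen points $x_1,\dots,x_l$, observe $(-K_{\tilde X}\cdot \tilde C_i)=(-K_X\cdot C_i)-l$ and $(\tilde C_1\cdot\tilde C_2)>0$, glue the normalizations at one intersection point, smooth the node using $H^1$-vanishing for the pulled-back tangent bundle together with unobstructedness of deformations of the map, and push the resulting family forward, reading off the nodes at the $x_i$ from the two transversal intersections with each exceptional curve. The only difference is cosmetic: you prove the vanishing $H^1(W_0,h_0^*T_{\tilde X})=0$ by hand via the normalization sequence, the bound $a_i\geq -1$ from the immersion, and transversality at the gluing point, whereas the paper deduces it from nefness/ampleness of $f_i^*T_{\tilde X}\otimes\cO(1)$ and cites \cite[II.7.5]{kol96}.
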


\begin{proof}
Denote by $\sigma:\tilde{X}=\Bl_{x_1,\cdots,x_l} X\to X$ the blow up of $X$
at $x_1,\cdots,x_l$. Let $E$ be the reduced exceptional divisor of $\sigma$. Let $\tilde{C}_1$ and  $\tilde{C_2}$ be strict transforms of $C_1$ and $C_2$ under $\sigma$. It is easy to see that
$(-K_{\tilde{X}}\cdot \tilde{C}_1)\geq 1$, $(-K_{\tilde{X}}\cdot \tilde{C}_2)\geq 2$ and $(\tilde{C}_1\cdot\tilde{C}_2)>0 $.
It is clear that both $\tilde{C}_1$ and $\tilde{C}_2$ are irreducible rational nodal curves intersecting each other transversally. Denote by $f_i:\bP^1\to \tilde{X}$ the normalization of $\tilde{C}_i$. Since $f_i$ is an immersion, we have an exact sequence $0\to  T_{\bP^1}\to f_i^* T_{\tilde{X}}\to N_{\tilde{C}_i/\tilde{X}}\to 0$, where $\deg  N_{\tilde{C}_i/\tilde{X}}= (-K_{\tilde{X}}\cdot\tilde{C}_i)-2\geq i-2$. Hence $f_1^*T_{\tilde{X}}\otimes\cO(1)$ is nef and $f_2^*T_{\tilde{X}}\otimes\cO(1)$ is ample.
Denote by $f:\bP^1\vee\bP^1\to \tilde{X}$ the gluing morphism of $f_1$ and $f_2$ at an intersection point of $\tilde{C}_1$ and $\tilde{C}_2$. Then $H^1(\bP^1\vee\bP^1,f^* T_{\tilde{X}})=0$ by \cite[II.7.5]{kol96}, so the deformation of $f$ is unobstructed. By \cite[I.2.17]{kol96} there exists a holomorphic flat family of divisors $\{\tilde{\Gamma}_t\}_{t\in\bD}$ such that $\tilde{\Gamma}_0=\tilde{C}_1+\tilde{C}_2$ and $\tilde{\Gamma}_t$ is an irreducible rational nodal curve whenever $t\neq 0$. After a reparametrization of $t$ if necessary we may also assume that $\tilde{\Gamma}_t+E$ is normal crossing for each $t$. The lemma is proved by taking $\Gamma_t:=\sigma_*(\tilde{\Gamma}_t)$.
\end{proof}

\begin{lem}\label{primitive}
 Let $(X,\Delta)$ be a log smooth projective surface
 pair with $X$ rational. Let $C=\sum_{i=1}^m C_i~(m\geq 2)$ be a reduced divisor on $X$ such that each $C_i$ is an irreducible nodal
 rational curve and $\Delta + C$ is normal crossing.
Assume
 \begin{itemize}
 \item $(C_1\cdot C_2)>0$;
  \item $(C_i\cdot (C-C_i))\geq 3$ for any 
 $3\leq i\leq m$;
 \item There exists a non-negative integer 
 $l<(C_1\cdot C_2)$,
 such that $(-K_X\cdot C_i)\geq l+i$ and
 $(C_i\cdot (C-C_1-C_2))\geq 3-l$ for any $i\in\{1,2\}$.
 \end{itemize}
 Then there exists a nodal admissible deformation
 $\{\Gamma_t\}_{t\in\bD}$ of $C$ such that
 $\Gamma_t=A_t+\sum_{i=3}^m C_i$ where $A_t$ is an irreducible rational nodal curve whenever
 $t\neq 0$.
\end{lem}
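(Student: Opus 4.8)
First I would apply Lemma~\ref{smoothing} to the two curves $C_1,C_2$ alone and then carry along the fixed components $C_3,\dots,C_m$. Since $\Delta+C$ is normal crossing, the $(C_1\cdot C_2)$ points of $C_1\cap C_2$ are smooth on both $C_1$ and $C_2$ and lie off $\Delta$ and off $C_3+\dots+C_m$; I choose any $l$ of them, $x_1,\dots,x_l$, where $l$ is the integer in the hypothesis. The hypotheses of Lemma~\ref{smoothing} are met ($X$ is a smooth projective rational surface; $C_1+C_2$, a subdivisor of $C$, is normal crossing; $C_1,C_2$ are intersecting rational nodal curves; $(-K_X\cdot C_1)\geq l+1$ and $(-K_X\cdot C_2)\geq l+2$ with $0\leq l<(C_1\cdot C_2)$), so it yields a holomorphic flat family $\{A_t\}_{t\in\bD}$ with $A_0=C_1+C_2$ and, for $t\neq 0$, $A_t$ an irreducible rational nodal curve singular at each $x_i$. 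Setting $\Gamma_t:=A_t+\sum_{i=3}^m C_i$ for $t\neq 0$ and $\Gamma_0:=C$ gives, after shrinking $\bD$ so that $A_t\neq C_i$ for $i\geq 3$, a holomorphic flat family of reduced divisors of exactly the asserted shape, with development $\Gamma=\Gamma^{\mathrm{sm}}\cup\bigcup_{i\geq 3}(C_i\times\bD)$, where $\Gamma^{\mathrm{sm}}\subset X\times\bD$ is the development of $\{A_t\}$. It then remains to check that $\{\Gamma_t\}$ is nodal and admissible.

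For \emph{nodality}: at $t=0$, $\Delta+\Gamma_0=\Delta+C$ is normal crossing; and for $0<|t|\ll 1$, $A_t$ is nodal, its nodes accumulate as $t\to 0$ only in $\Sing(C_1+C_2)$, which is disjoint from $\Delta\cup C_3\cup\dots\cup C_m$, and for each fixed $Z\in\{\Delta,C_3,\dots,C_m\}$ one has $A_t\cap Z\to(C_1+C_2)\cap Z$ with the limiting intersection transverse at points smooth on both curves, an open condition. Hence for small $t$, $A_t$ meets $Z$ transversally at smooth points of both and has no node on $Z$, so $\Delta+\Gamma_t$ is normal crossing. (This step is routine but a little fiddly.)

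For \emph{admissibility} I must show $\Gamma_0^*$ is Brody hyperbolic. Since $\Gamma_0^*\subseteq C=\bigcup_i C_i$, and any non-constant entire curve contained in $C$ lies in a single $C_i$ and lifts through the normalization $\nu_i\colon\bP^1\to C_i$, the set $\Gamma_0^*$ is Brody hyperbolic if and only if $|\nu_i^{-1}(S_i)|\geq 3$ for every $i$, where $S_i:=C_i\setminus\Gamma_0^*$ is the set of points of $C_i$ at which $\Gamma$ fails to be locally analytically irreducible. Because $\Gamma$ is flat over $\bD$ with reduced special fiber $C$, every point where $C$ is smooth lies in $\Gamma_0^*$, so $S_i\subseteq\Sing(C)\cap C_i$. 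On the other hand every point of $C_i\cap(C-C_i)$ lies in $S_i$, since there $\Gamma$ has two distinct local branches — either $C_i\times\bD$ and $C_j\times\bD$ for some $j\geq 3$, or $C_i\times\bD$ and the smooth branch of $\Gamma^{\mathrm{sm}}$ whose special fiber is the local branch of $C_1$ or $C_2$ there. These are $(C_i\cdot(C-C_i))$ distinct smooth points of $C_i$, which for $i\geq 3$ already gives $|\nu_i^{-1}(S_i)|\geq(C_i\cdot(C-C_i))\geq 3$; for $i\in\{1,2\}$ the same argument puts the $(C_i\cdot(C-C_1-C_2))$ points of $C_i\cap(C_3\cup\dots\cup C_m)$ into $S_i$.

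The remaining — and, I expect, hardest — point is to show $x_j\in S_1\cap S_2$ for each $j$; granting it, $S_i$ (for $i\in\{1,2\}$) contains the $\geq l+\max\{0,3-l\}\geq 3$ distinct smooth points $\{x_1,\dots,x_l\}$ together with those of $C_i\cap(C_3\cup\dots\cup C_m)$, so $|\nu_i^{-1}(S_i)|\geq 3$, $\Gamma_0^*$ is Brody hyperbolic, and the proof is complete. Since $\Gamma^{\mathrm{sm}}$ is reduced at $(x_j,0)$, the claim is that it is \emph{reducible} there. This does not follow formally from $A_t$ being singular at $x_j$ for all $t$ (a one-parameter family of nodal curves can have irreducible total space, e.g.\ $\{pq=t^3\}$), so I would use the construction in the proof of Lemma~\ref{smoothing}: writing $\sigma\colon\tilde{X}=\Bl_{x_1,\dots,x_l}X\to X$, the two branches of $C_1+C_2$ at $x_j$ correspond to the two \emph{distinct} points $a_j=\tilde{C}_1\cap E_j$ and $b_j=\tilde{C}_2\cap E_j$ on the exceptional curve $E_j$ over $x_j$; the germs $W_{a_j},W_{b_j}$ of $\tilde{\Gamma}^{\mathrm{sm}}$ at $(a_j,0)$ and $(b_j,0)$ are disjoint smooth surface germs (there the special fiber is smooth), and since $\sigma$ is proper, $\Gamma^{\mathrm{sm}}$ near $(x_j,0)$ equals $(\sigma\times\mathrm{id})(W_{a_j})\cup(\sigma\times\mathrm{id})(W_{b_j})$, a union of two irreducible analytic germs whose special fibers are the germs of $C_1$ and of $C_2$ at $x_j$ — which are distinct. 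Hence $\Gamma^{\mathrm{sm}}$ is reducible at $(x_j,0)$, so $x_j\in S_1\cap S_2$ and $\{\Gamma_t\}$ is the desired nodal admissible deformation.
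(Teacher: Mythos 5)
Your proposal is correct and follows essentially the same route as the paper's proof: smooth $C_1+C_2$ at $l$ chosen intersection points via Lemma~\ref{smoothing}, carry $C_3,\dots,C_m$ along as fixed components, and verify admissibility by exhibiting at least three points of local analytic reducibility of the development on (the normalization of) each component — and you even supply the one detail the paper leaves implicit, namely that the development $\cA$ of $\{A_t\}$ is reducible at each $(x_j,0)$, which you correctly deduce from the blow-up construction rather than from the mere singularity of $A_t$ at $x_j$. One small caveat: your blanket claim that \emph{every} point of $C_i\cap(C-C_i)$ lies in $S_i$ fails for $i\in\{1,2\}$ at the smoothed nodes of $C_1\cap C_2$ (there $\cA$ is locally irreducible, as for $xy=t$), but since your count only uses $x_1,\dots,x_l$ together with $C_i\cap(C_3\cup\cdots\cup C_m)$, nothing in the argument is affected.
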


\begin{proof}
 Let us pick $l$ distinct points $x_1,\cdots,x_l$
 in $C_1\cap C_2$.
 By Lemma \ref{smoothing}, there exists a holomorphic
 flat family $\{A_t\}_{t\in\bD}$ of reduced divisors
 on $X$ such that $A_0=C_1+C_2$ and $A_t$ is an irreducible
 rational nodal curve singular at $x_1,\cdots,x_l$
 for any $t\in\bD\setminus\{0\}$.
 By Bertini's theorem, after a reparametrization of 
 $t$ we may assume that $\Delta+A_t+\sum_{i=3}^m C_i$ is normal
 crossing for any $t\in \bD$. Let $\Gamma_t:=A_t+\sum_{i=3}^m C_i$,
 then it suffices to show that $\Gamma_0^*$
 is hyperbolic. As a divisor in $X\times\bD$, $\Gamma
 =\cA+\sum_{i=3}^m\cC_i$, where $\cA$ is the development
 of $\{A_t\}_{t\in\bD}$ and $\cC_i:=C_i\times\bD$.
 Thus $\Gamma$ is (analytically) reducible at
 $(x,0)$ if $x\in C_i\cap C_j$
 for some $\{i,j\}\neq \{1,2\}$.
 By Lemma \ref{smoothing}, we know that
 $\cA$ is analytically reducible at $(x_1,0),\cdots,
 (x_l,0)$. Thus we have 
 \[
  \Gamma_0\setminus \Gamma_0^*\supset \{x_1,\cdots,x_l\}
  \cup\big(\cup_{\{i,j\}\neq \{1,2\}} (C_{i}\cap 
  C_{j})\big)=:V.
 \]
Since $\Gamma_0=\sum_{i=1}^mC_i$, we only
need to show that $C_i\setminus V$ is hyperbolic 
for any $1\leq i\leq k$.
For each $i\in\{1,2\}$, 
$\#C_i\cap V=\#\big(\{x_1,\cdots,x_l\}\cup(\cup_{j\geq 3}
(C_i\cap C_j))\big)
=l+(C_i\cdot (C-C_1-C_2))\geq 3$. For each $i\geq 3$,
$\#C_i\cap V=\big(C_i\cdot(C- C_i)\big)
\geq 3$. Hence  $C_i\setminus V$ is hyperbolic
for each $1\leq i\leq k$.
The lemma is proved.\end{proof}

\begin{defn}
Let  $X$ be a smooth projective surface. Let $C=\sum_{i=1}^m C_i$ be a reduced normal crossing divisor
on $X$. The \emph{dual graph} $\cD(C):=(V,E,r,\wt)$ of $C$ is a vertex-weighted multigraph defined as follows:
\begin{itemize}
\item $V:=\{v_1,\cdots,v_m\}$;
\item $E:=\cup_{1\leq i<j\leq m} (C_i\cap C_j)$;
\item For each $p\in E$, $r(p):=\{v_i,v_j\}$ where $\{i,j\}$ is the unique unordered pair with $p\in C_i\cap C_j$;
\item For each $v_i\in V$, $\wt(v_i):=(-K_X\cdot C_i)$.
\end{itemize}
\end{defn}

\begin{lem}\label{dualgraph}
 Let $(X,\Delta)$ be a log smooth projective surface pair with $X$ rational.
 Let $C=\sum_{i=1}^m C_i$ be a reduced divisor such that $C+\Delta$ is normal crossing, and each $C_i$ is an irreducible rational curve. If the dual graph $\cD(C)$ is admissibly contractible, then there exists a successive nodal admissible deformation $\{\Gamma_t^{(j)}\}_{t\in\bD, 1\leq j\leq m-1}$ such that
 $\Gamma_{t_{m-1}}^{(m-1)}$ is an irreducible rational nodal curve. If, moreover,  $X\setminus(C\setminus\Delta)$ is Brody hyperbolic, then 
  $\{\Gamma_t^{(j)}\}_{t\in\bD, 1\leq j\leq m-1}$ can be chosen so that
  $X\setminus(\Gamma_t^{(j)}\setminus\Delta)$ is Brody hyperbolic for any
  $t\in\bD$ and any $1\leq j\leq m-1$.
\end{lem}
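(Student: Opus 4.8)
The plan is to induct along a fixed admissible contraction sequence of $\cD(C)$, realizing each single graph contraction by one application of Lemma \ref{primitive} and preserving Brody hyperbolicity at each stage via Lemma \ref{zai}. Concretely, fix a sequence of vertex-weighted multigraphs $G_0=\cD(C),G_1,\dots,G_k$ with $G_k$ a singleton and $G_j$ an admissible contraction of $G_{j-1}$ for $1\le j\le k$; since an admissible contraction decreases the number of vertices by exactly one, necessarily $k=m-1$. Set $\Gamma_{t_0}^{(0)}:=C$. I would then prove by induction on $j$ that one can choose $t_1,\dots,t_j\in\bD\setminus\{0\}$ and nodal admissible deformations $\{\Gamma_t^{(i)}\}_{t\in\bD}$ ($1\le i\le j$), with $\{\Gamma_t^{(i)}\}$ an admissible deformation of $\Gamma_{t_{i-1}}^{(i-1)}$, so that $\Gamma_{t_j}^{(j)}$ is a reduced divisor all of whose components are irreducible rational nodal curves, $\Delta+\Gamma_{t_j}^{(j)}$ is normal crossing, and $\cD(\Gamma_{t_j}^{(j)})=G_j$; moreover, when $X\setminus(C\setminus\Delta)$ is Brody hyperbolic, that $X\setminus(\Gamma_t^{(i)}\setminus\Delta)$ is Brody hyperbolic for all $t\in\bD$ and all $1\le i\le j$. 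The case $j=m-1$ is precisely the lemma, since $G_{m-1}$ is a singleton, so $\Gamma_{t_{m-1}}^{(m-1)}$ has a single component, which is an irreducible rational nodal curve.

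The base case $j=0$ is trivial. For the inductive step, write $B:=\Gamma_{t_{j-1}}^{(j-1)}$ and let $\{v,w\}$ be the pair of adjacent vertices of $G_{j-1}$ along which $G_j$ is an admissible contraction, with associated integer $0\le l<\#r^{-1}(\{v,w\})$; label the components of $B$ corresponding to $v$ and $w$ by $C_1$ and $C_2$ (so that the weight asymmetry matches), and the remaining components by $C_3,\dots,C_{m-j+1}$. The heart of the matter is that the conditions defining the admissible contraction $G_{j-1}\to G_j$ \emph{are} the numerical hypotheses of Lemma \ref{primitive} for $B$: since $B$ is normal crossing, in $\cD(B)=G_{j-1}$ one has $\deg(v_i)=(C_i\cdot(B-C_i))$, $\#r^{-1}(\{v,w\})=(C_1\cdot C_2)$ and $\wt(v_i)=(-K_X\cdot C_i)$, so ``$\deg(x)\ge 3$ for $x\neq v,w$'' reads ``$(C_i\cdot(B-C_i))\ge 3$ for $i\ge 3$'', ``$\wt(v)\ge l+1,\ \wt(w)\ge l+2$'' reads ``$(-K_X\cdot C_1)\ge l+1,\ (-K_X\cdot C_2)\ge l+2$'', and ``$\deg(v)-\#r^{-1}(\{v,w\})+l\ge 3$'' (and likewise for $w$) reads ``$(C_i\cdot(B-C_1-C_2))\ge 3-l$ for $i\in\{1,2\}$''. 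Since in addition $X$ is rational, $\Delta+B$ is normal crossing, each $C_i$ is an irreducible rational nodal curve by induction, and $m-j+1\ge 2$, Lemma \ref{primitive} applies and yields a nodal admissible deformation $\{\Gamma_t^{(j)}\}_{t\in\bD}$ of $B$ with $\Gamma_t^{(j)}=A_t+\sum_{i=3}^{m-j+1}C_i$ for $t\neq 0$, where $A_t$ is an irreducible rational nodal curve.

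To close the induction I would verify two points. First, $\cD(\Gamma_t^{(j)})=G_j$ for $t\neq 0$: as $A_t$ is algebraically equivalent to $A_0=C_1+C_2$, one has $(-K_X\cdot A_t)=(-K_X\cdot C_1)+(-K_X\cdot C_2)=\wt(v)+\wt(w)$ and $(A_t\cdot C_i)=(C_1\cdot C_i)+(C_2\cdot C_i)$ for $i\ge 3$, while the nodal (normal crossing) part of the conclusion of Lemma \ref{primitive} forces these intersections to be transverse and keeps the nodes of $A_t$---in particular the $l$ nodes inherited from $C_1\cap C_2$---off the remaining components; reading this against the definition of a contraction gives $\cD(\Gamma_t^{(j)})=G_j$. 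Second, the hyperbolicity: $\Gamma_0^{(j)}=B$, and $X\setminus(B\setminus\Delta)$ is Brody hyperbolic by induction, so Lemma \ref{zai} (applicable since $\{\Gamma_t^{(j)}\}$ is an admissible deformation of $B$ with $\Delta+B$ normal crossing) gives that $X\setminus(\Gamma_t^{(j)}\setminus\Delta)$ is Brody hyperbolic for $0<|t|\ll 1$; after shrinking $\bD$ and rescaling $t$ this holds for all $t\in\bD\setminus\{0\}$, and it holds at $t=0$ by induction. Choosing any $t_j\in\bD\setminus\{0\}$ completes the step, and the induction yields the lemma.

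The main obstacle is the bookkeeping behind the identity $\cD(\Gamma_t^{(j)})=G_j$: one must use the normal-crossing part of Lemma \ref{primitive} to be sure the smoothing produces exactly the incidences prescribed by the contraction and no others, and one must carry out the (routine but necessary) reparametrization upgrading ``$0<|t|\ll 1$'' in Lemma \ref{zai} to ``all $t\in\bD$''. Everything else is the direct dictionary between admissible contractions of multigraphs and the numerical hypotheses of Lemma \ref{primitive}.
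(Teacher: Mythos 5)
Your proposal is correct and follows essentially the same route as the paper, whose proof is just a two-sentence sketch: construct the successive deformation inductively along the admissible contraction sequence of $\cD(C)$ via Lemma \ref{primitive}, and preserve hyperbolicity via Lemma \ref{zai} with reparametrizations of $t$. Your write-up simply makes explicit the dictionary between the admissible-contraction conditions and the numerical hypotheses of Lemma \ref{primitive}, and the bookkeeping $\cD(\Gamma_{t_j}^{(j)})=G_j$, which the paper leaves implicit.
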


\begin{proof}
 The successive nodal admissible deformation $\{\Gamma_t^{(j)}\}_{t\in\bD, 1\leq j\leq m-1}$ can be constructed inductively by a successive admissible contraction of the dual graph $\cD(C)$ using Lemma \ref{primitive}. The hyperbolicity part follows from Lemma \ref{zai} and taking reparametrizations of $t$ if necessary.
\end{proof}

\begin{lem}\label{hypsmooth}
 Let $(X,\Delta)$ be a log smooth projective surface 
 pair with $X$ rational. Let $C$ be an irreducible rational nodal
 curve in $X$ such that $\Delta + C$ is normal 
 crossing.
 If $(-K_X\cdot C)\geq 8$ and $\#\Sing(C)\geq 4$, 
 then  there exists a successive nodal admissible deformation
$\{\Gamma_t^{(j)}\}_{t\in\bD, 1\leq j\leq 2}$ of 
$C$ such that $\Gamma_{t_2}^{(2)}$ is
an irreducible smooth hyperbolic curve.
 If, moreover, $X\setminus(C\setminus\Delta)$ is Brody hyperbolic, then 
  $\{\Gamma_t^{(j)}\}_{t\in\bD, 1\leq j\leq 2}$ can be chosen so that
  $X\setminus(\Gamma_t^{(j)}\setminus\Delta)$ is Brody hyperbolic for any
  $t\in\bD$ and any $1\leq j\leq 2$.
\end{lem}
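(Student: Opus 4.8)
The plan is to build the two admissible deformations so that at each step the locus $\Gamma_0^{*}$ of Definition~\ref{adm_def} is manifestly Brody hyperbolic, and then to transport the hyperbolicity of $X\setminus(\,\cdot\,\setminus\Delta)$ across each step by Lemma~\ref{zai}. Write $\delta:=\#\Sing(C)\ge 4$ and let $\nu\colon\bP^{1}\to C$ be the normalization. Two elementary facts are used repeatedly: (i) $\bP^{1}$ with at least three points removed is Brody hyperbolic; (ii) any holomorphic map $\bC\to B$ to a reduced nodal curve $B$ lifts through the normalization of $B$ (locally at a node one of the two branches must be traced identically, by the identity theorem), so $B$ is Brody hyperbolic as soon as its normalization is. Also, in a flat family of divisors on $X$ both the arithmetic genus and the numerical class are constant; since $p_{a}(C)=\delta$, a smooth deformation of $C$ is a smooth curve of genus $\delta\ge 2$, hence Brody hyperbolic.

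\emph{Step 1 (reduce to two nodes).} Fix two nodes $y_{1},y_{2}$ of $C$ and let $\sigma\colon\tX=\Bl_{y_{1},y_{2}}X\to X$, with exceptional curves $E_{1},E_{2}$; let $\widetilde C\subset\tX$ be the strict transform of $C$. Then $\widetilde C$ is an irreducible nodal curve with $\delta-2$ nodes, geometric genus $0$, arithmetic genus $\delta-2$, meeting each $E_{i}$ transversally in two points, and $-K_{\tX}\cdot\widetilde C=(-K_{X}\cdot C)-4\ge 4$. By adjunction $\deg_{\widetilde C}\cO_{\widetilde C}(\widetilde C)=\widetilde C^{2}=2p_{a}(\widetilde C)-2+(-K_{\tX}\cdot\widetilde C)\ge 2p_{a}(\widetilde C)+2$, so $H^{1}(\widetilde C,\cO_{\widetilde C}(\widetilde C))=0$ and $\cO_{\widetilde C}(\widetilde C)$ is very ample on $\widetilde C$; moreover $H^{1}(\tX,\cO_{\tX})=0$ since $\tX$ is rational, so $H^{0}(\tX,\cO_{\tX}(\widetilde C))\twoheadrightarrow H^{0}(\widetilde C,\cO_{\widetilde C}(\widetilde C))$. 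By standard deformation theory of nodal curves on surfaces (cf.\ \cite[II.7]{kol96}) the deformations of $\widetilde C$ inside $\tX$ are then unobstructed and all $\delta-2$ nodes can be smoothed simultaneously: there is a holomorphic flat family $\{\widetilde\Gamma_{t}\}_{t\in\bD}$ on $\tX$ with $\widetilde\Gamma_{0}=\widetilde C$ and $\widetilde\Gamma_{t}$ a smooth irreducible curve of genus $\delta-2$ for $t\neq 0$. For $0<|t|\ll 1$ the curve $\widetilde\Gamma_{t}$ still meets each $E_{i}$ transversally in two points, and after a reparametrization we may assume $\widetilde\Gamma_{t}+E_{1}+E_{2}+\sigma^{*}\Delta$ is normal crossing for all $t$. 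Set $\Gamma^{(1)}_{t}:=\sigma_{*}\widetilde\Gamma_{t}$. This is a flat family (constant arithmetic genus $\delta$) with $\Gamma^{(1)}_{0}=C$, and for $t\neq 0$ the curve $\Gamma^{(1)}_{t}$ is irreducible with $\Sing(\Gamma^{(1)}_{t})=\{y_{1},y_{2}\}$, both ordinary nodes (two distinct points of $E_{i}$ blow down to two branches with distinct tangent directions at $y_{i}$). The development $\Gamma^{(1)}\subset X\times\bD$ is analytically reducible at $(x,0)$ precisely for $x\in\{y_{1},y_{2}\}$ --- at the other nodes of $C$ the smoothing makes $\Gamma^{(1)}$ locally $\{uv=t\}$, which is smooth, once the family is chosen transverse to the discriminant --- so $\Gamma^{(1),*}_{0}=C\setminus\{y_{1},y_{2}\}$. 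An entire curve in $C\setminus\{y_{1},y_{2}\}$ lifts through $\nu$ to $\bP^{1}\setminus\nu^{-1}(\{y_{1},y_{2}\})=\bP^{1}\setminus\{4\text{ points}\}$, hence is constant. Thus $\{\Gamma^{(1)}_{t}\}$ is a nodal admissible deformation of $C$; if $X\setminus(C\setminus\Delta)$ is Brody hyperbolic, Lemma~\ref{zai} furnishes $t_{1}\neq 0$ with $X\setminus(\Gamma^{(1)}_{t_{1}}\setminus\Delta)$ Brody hyperbolic.

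\emph{Step 2 (smooth the two remaining nodes).} The curve $\Gamma^{(1)}_{t_{1}}$ is irreducible and nodal with exactly two nodes, of arithmetic genus $\delta$, geometric genus $\delta-2\ge 2$, and numerically $\Gamma^{(1)}_{t_{1}}\equiv C$, so $-K_{X}\cdot\Gamma^{(1)}_{t_{1}}=-K_{X}\cdot C\ge 8>0$. As before, $\cO_{\Gamma^{(1)}_{t_{1}}}(\Gamma^{(1)}_{t_{1}})$ has degree $(\Gamma^{(1)}_{t_{1}})^{2}>2p_{a}(\Gamma^{(1)}_{t_{1}})-2$, so $H^{1}(\Gamma^{(1)}_{t_{1}},\cO(\Gamma^{(1)}_{t_{1}}))=0$ and the class is very ample on $\Gamma^{(1)}_{t_{1}}$, and $H^{0}(X,\cO_{X}(\Gamma^{(1)}_{t_{1}}))\twoheadrightarrow H^{0}(\Gamma^{(1)}_{t_{1}},\cO(\Gamma^{(1)}_{t_{1}}))$ since $X$ is rational. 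Hence there is a holomorphic flat family $\{\Gamma^{(2)}_{t}\}_{t\in\bD}$ on $X$ with $\Gamma^{(2)}_{0}=\Gamma^{(1)}_{t_{1}}$ and $\Gamma^{(2)}_{t}$ a smooth irreducible curve of genus $\delta$ for $t\neq 0$, with $\Gamma^{(2)}_{t}+\Delta$ normal crossing for all $t$ after a reparametrization. Both nodes of $\Gamma^{(1)}_{t_{1}}$ are smoothed, so $\Gamma^{(2),*}_{0}=\Gamma^{(1)}_{t_{1}}$, which is Brody hyperbolic because its normalization is a smooth curve of genus $\delta-2\ge 2$. So $\{\Gamma^{(2)}_{t}\}$ is a nodal admissible deformation of $\Gamma^{(1)}_{t_{1}}$, Lemma~\ref{zai} gives a small $t_{2}\neq 0$ with $X\setminus(\Gamma^{(2)}_{t_{2}}\setminus\Delta)$ Brody hyperbolic, and $\Gamma^{(2)}_{t_{2}}$ is itself a smooth curve of genus $\delta\ge 2$, hence Brody hyperbolic. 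Then $\{\Gamma^{(j)}_{t}\}_{t\in\bD,\,1\le j\le 2}$ is the required successive nodal admissible deformation.

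\emph{Main obstacle.} Everything above is formal once the two deformation families exist with the stated control on singularities; the real content is the existence, in Step 1, of a flat family that smooths $\delta-2$ prescribed nodes of $C$ while forcing the other two to persist (together with the normal crossing property relative to $\Delta$, handled by Bertini and reparametrization). This is exactly where $-K_{X}\cdot C\ge 8$ is used: after blowing up the two retained nodes, it makes $\cO_{\widetilde C}(\widetilde C)$ positive enough --- degree $\ge 2p_{a}(\widetilde C)+2$ --- that, combined with $H^{1}(\cO_{\tX})=0$, all remaining nodes of $\widetilde C$ smooth off simultaneously within $|\widetilde C|$. The other hypothesis, $\delta\ge 4$, is precisely what makes both loci $\Gamma_0^{*}$ hyperbolic: two retained nodes pull back to $4\ge 3$ points on $\bP^{1}$ in Step 1, and the intermediate curve has geometric genus $\delta-2\ge 2$ in Step 2.
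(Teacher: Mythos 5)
Your proof is correct and takes essentially the same route as the paper: blow up the two retained nodes, use $(-K_X\cdot C)\geq 8$ to deform $\widetilde{C}$ inside its linear system on the blow-up so that the remaining nodes smooth off, push down to an irreducible two-nodal curve of geometric genus $\#\Sing(C)-2\geq 2$, then smooth the last two nodes inside $|C|$, verifying admissibility at step one via $\bP^1$ minus four points and at step two via the genus bound, and transporting hyperbolicity with Lemma \ref{zai} at each step. The only cosmetic differences are that the paper produces both smoothing families from base-point-freeness of $\widetilde{C}$ and $C$ plus Bertini rather than from unobstructedness/very-ampleness (and your stated implication ``degree $>2p_a-2$ implies very ample'' is too weak as phrased, though the degree you actually have, at least $2p_a+2$, meets the correct threshold).
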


\begin{proof}
Let us pick two nodes $p_1,p_2$ of $C$. Denote by 
$\sigma:\tilde{X}=\Bl_{p_1,p_2}X\to X$ the blow up 
of $X$ at $p_1,p_2$. Let $E=E_1+E_2$ be the reduced exceptional
divisor of $\sigma$. Let $\tilde{C}\subset\tilde{X}$ be the strict
transform of $C$ under $\sigma$. We claim that
$\tilde{C}$ is base point free in $\tilde{X}$.

Since $\tilde{X}$ is rational, we have $H^1(\tilde{X},\cO_{\tilde{X}})=0$.
Thus the claim is equivalent to saying that $\cO_{\tilde{C}}(\tilde{C})$
is globally generated. Since $(-K_X\cdot C)\geq 8$,
we have 
\[
 (-K_{\tilde {X}}\cdot \tilde{C})= (\sigma^*(-K_X)\cdot \tilde{C})-(E\cdot \tilde{C})
 =(-K_X\cdot C)-4\geq 4.
 \]
By adjunction we have $(-K_{\tilde{X}}\cdot \tilde{C})=
(\tilde{C}^2) - 2\#\Sing(\tilde{C})+2$, so we have
$\deg\nu^*\cO_{\tilde{C}}(\tilde{C})=(\tilde{C}^2)\geq 2\#\Sing(\tilde{C})+2$,
where $\nu:\bP^1\to \tilde{C}$ is the normalization
of $\tilde{C}$. Hence the global sections of 
$\nu^*\cO_{\tilde{C}}(\tilde{C}))$ separate any $2\#
\Sing(\tilde{C})+1$ points on $\bP^1$. In particular, this implies that
$\cO_{\tilde{C}}(\tilde{C})$ is globally generated.

Now we have shown that $\tilde{C}$ is base point free
on $\tilde{X}$. By Bertini's theorem, there exists a holomorphic
flat family of irreducible divisors $\{\tilde{\Gamma}_t^{(1)}\}_{t\in\bD}$
on $\tilde{X}$ such that $\tilde{\Gamma}_0^{(1)}=\tilde{C}$
and $(\tilde{X}, \tilde{\Gamma}_t^{(1)}+E+\sigma^*\Delta)$ is log smooth for any $t\in \bD\setminus\{0\}$.
Let $\Gamma_t^{(1)}:=\sigma_*\tilde{\Gamma}_t^{(1)}$.
Since $\tilde{\Gamma}_0^{(1)}=\tilde{C}$ intersects $E_i$
transversally at two points for any $i\in\{1,2\}$,
it is clear that $\tilde{\Gamma}$ has two analytic branches intersecting
$E_i\times\{0\}$ in different points. Thus
$\Gamma^{(1)}$ has two analytic branches at $(p_i,0)$ for each
$i\in\{1,2\}$ which implies that $\Gamma_0^{(1),*}\subset
C\setminus\{p_1,p_2\}$ is hyperbolic. Besides, $(\tilde{X},
\tilde{\Gamma}_t^{(1)}+E+\sigma^*\Delta)$ being log smooth implies
that $\Gamma_t^{(1)}$ is nodal at $p_1,p_2$, smooth
elsewhere and intersects transversally with $\Delta$
for any $t\in\bD\setminus\{0\}$.
Hence $\{\Gamma_t^{(1)}\}_{t\in\bD}$ is a nodal admissible
deformation of $C$ with $\Delta+\Gamma_t^{(1)}$ being
normal crossing for each $t\in\bD$.

Now let us fix an arbitrary $t_1\in\bD\setminus\{0\}$.
Since $p_a(\tilde{C})=\#\Sing(\tilde{C})=\#\Sing(C)-2\geq 2$, 
we know that $\Gamma_t^{(1)}$ is hyperbolic for any 
$t\in\bD\setminus\{0\}$. As we argued before
in showing the base-point-freeness of $\tilde{C}$,
$(-K_X\cdot C)\geq 8\geq 4$ also implies that $C$ is 
base point free on $X$. Hence by Bertini's theorem
there exists a holomorphic flat family of irreducible
divisors $\{\Gamma_t^{(2)}\}_{t\in\bD}$ on $X$ such
that $\Gamma_0^{(2)}=\Gamma_{t_1}^{(1)}$ and $(X,\Gamma_t^{(2)}
+\Delta)$ is log smooth for any $t\in\bD\setminus\{0\}$.
Besides, $\Gamma_0^{(2),*}\subset\Gamma_0^{(2)}=
\Gamma_{t_1}^{(1)}$ is hyperbolic. Hence $\{\Gamma_t^{(2)}\}_{t\in\bD}$
is a nodal admissible deformation of $\Gamma_{t_1}^{(1)}$
such that $\Delta+\Gamma_t^{(2)}$ is normal crossing
for any $t\in\bD$. Besides, $g(\Gamma_t^{(2)})=
p_a(\Gamma_0^{(2)})\geq p_g(\Gamma_0^{(2)})\geq 2$ for any
$t\in\bD\setminus\{0\}$, hence $\Gamma_t^{(2)}$ is hyperbolic
for any $t\in\bD\setminus\{0\}$. The lemma is proved
by taking arbitrary $t_2\neq 0$.
\end{proof}

\begin{lem}\label{zai_gen}
Let $(X,\Delta)$ be a log smooth projective surface pair
with $X$ rational. Let $C=\sum_{i=1}^m C_i$ be a reduced divisor
on $X$ such that $C+\Delta$ is normal crossing. Assume that each $C_i$ is a base-point-free irreducible rational curve with
$(-K_X\cdot C_i)\geq 2$, and it intersects
with at least four other $C_j$'s.
If $X\setminus\big((\cup_{i=1}^m C_i)
\setminus \Delta \big)$ is Brody hyperbolic,
then there exists an irreducible smooth curve $C'$ linearly
equivalent to $\sum_{i=1}^m C_i$ such that both $C'$ and
$X\setminus (C'\setminus\Delta)$ are Brody hyperbolic.
\end{lem}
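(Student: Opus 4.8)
The plan is to reduce the problem to the combinatorics of the dual graph $\cD(C)$: contract that multigraph to a single vertex, which by Lemma \ref{dualgraph} corresponds to degenerating $C$ through nodal curves to a single irreducible rational nodal curve $\Gamma$, and then smooth $\Gamma$ by Lemma \ref{hypsmooth}. Every degeneration in sight is a nodal admissible deformation, so Brody hyperbolicity of the relevant complement is carried along automatically by Lemmas \ref{zai}, \ref{dualgraph} and \ref{hypsmooth}; and linear equivalence to $\sum_i C_i$ is preserved throughout, since all the families involved are connected flat families of divisors on $X$ or on a blow-up of it, and these surfaces are rational, so $\operatorname{Pic}$ has no continuous part.

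First I would look at $G := \cD(C) = (V, E, r, \wt)$. Each vertex satisfies $\wt(v_i) = (-K_X \cdot C_i) \ge 2$, and since $C_i$ meets at least four of the other components, $\rdeg(v_i) \ge 4$ for every $i$; in particular $m \ge 5$. The crucial claim is that $G$ is admissibly contractible. When $G$ is completely multipartite -- which is the situation in all of our applications, where the $C_i$ form a generic line arrangement in $\bP^2$ or a union of general fibres and sections on $\bF_N$ -- this is exactly Lemma \ref{contgraph2}; in general one first performs a few admissible contractions with $l = 0$ to reach the completely multipartite case and then applies that lemma.

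Granting this, Lemma \ref{dualgraph} applied to $(X, \Delta)$ and $C$ produces a successive nodal admissible deformation whose last member $\Gamma$ is an irreducible rational nodal curve with $\Delta + \Gamma$ normal crossing, with $X \setminus (\Gamma \setminus \Delta)$ Brody hyperbolic, and with $\Gamma \sim \sum_{i=1}^{m} C_i$. It then remains to feed $\Gamma$ into Lemma \ref{hypsmooth}, for which I need the two numerical inputs: from $\Gamma \sim \sum_i C_i$ and $\wt(v_i) \ge 2$ we get $(-K_X \cdot \Gamma) = \sum_i (-K_X \cdot C_i) \ge 2m \ge 10 \ge 8$; and, writing $n := \#E$ for the number of pairwise intersection points of the components, the identity $2n = \sum_i \deg_G(v_i) \ge \sum_i \rdeg(v_i) \ge 4m$ combined with $\#\Sing(\Gamma) = p_a(\Gamma) = p_a(\sum_i C_i) \ge n - m + 1$ gives $\#\Sing(\Gamma) \ge m + 1 \ge 4$. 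Hence Lemma \ref{hypsmooth} yields a further successive nodal admissible deformation of $\Gamma$ terminating in an irreducible smooth hyperbolic curve $C'$ with $X \setminus (C' \setminus \Delta)$ Brody hyperbolic; since $C' \sim \Gamma \sim \sum_{i=1}^{m} C_i$, this $C'$ has all the required properties.

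The surface-geometry content of the last two steps is packaged entirely by Lemmas \ref{dualgraph} and \ref{hypsmooth}, so the main obstacle I expect is the very first step: establishing admissible contractibility of $\cD(C)$. The subtlety is that an ill-chosen contraction can drop a vertex's reduced degree and eventually strand it in a configuration that cannot be contracted further -- for instance, two vertices joined by a triple edge with small weights, which is not admissibly contractible -- and avoiding this is precisely the purpose of the case analysis behind Lemma \ref{contgraph2}, which via Lemma \ref{contgraph1} reduces everything to the explicit sample multigraphs $K_1, \dots, K_4$ of Proposition \ref{samplegraphs}.
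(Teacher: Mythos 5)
Your overall route matches the paper's: contract the dual graph, realize the contractions geometrically via Lemma \ref{dualgraph}, verify the numerical inputs $(-K_X\cdot\Gamma)\geq 2m\geq 10$ and $\#\Sing(\Gamma)\geq m+1$, smooth by Lemma \ref{hypsmooth}, and deduce linear equivalence from the rationality of $X$; those parts of your argument are correct and essentially identical to the paper's proof.

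However, there is a genuine gap at exactly the step you yourself flag as the main obstacle: you do not prove that $\cD(C)$ is admissibly contractible under the stated hypotheses. You only handle the case where $\cD(C)$ is completely multipartite, observing that this holds ``in all of our applications'', and for the general case you propose to ``first perform a few admissible contractions with $l=0$ to reach the completely multipartite case''. That reduction is unjustified and does not obviously work: contracting an adjacent pair can lower the reduced degree both of the merged vertex (if the two vertices share neighbors) and of every vertex adjacent to both members of the pair, so the hypothesis $\rdeg\geq 4$ required by Lemma \ref{contgraph2} can be destroyed along the way, and you give no reason why such contractions should terminate in a completely multipartite graph at all. The missing idea --- and the only place where the base-point-freeness hypothesis enters --- is that base-point-freeness of each $C_i$ already forces $\cD(C)$ to be completely multipartite, so Lemma \ref{contgraph2} applies directly and no preliminary contractions are needed; this is precisely how the paper argues. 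Indeed, if $C_i$ were disjoint from $C_j$ and $C_k$ while $C_j\cap C_k\neq\emptyset$, the morphism defined by the base-point-free system $|C_i|$ would contract the connected curve $C_j\cup C_k$ to a point: when $(C_i^2)>0$ the Hodge index theorem gives $(C_j^2)<0$, and when $(C_i^2)=0$ Zariski's lemma applied to the induced fibration forces either $(C_j^2)<0$ or $C_j$ to be the whole fiber containing $C_k$; both conclusions contradict the assumption that $C_j$ and $C_k$ are distinct irreducible base-point-free curves. Your proof never uses the base-point-freeness hypothesis, which is the signal that this step was skipped.
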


\begin{proof}
 Let $G:=\cD(C)$ be the dual graph of $C$. Since each $C_i$ is base-point-free, $G$ is completely multipartite. By assumptions, for each vertex $v$ of $G$ we have $\rdeg(v)\geq 4$ and
 $\wt(v)\geq 2$. Hence Lemma \ref{contgraph2} implies that $\cD(C)$ is admissibly contractible. By Lemma \ref{dualgraph}, there exists a successive nodal admissible deformation $\{\Gamma_t^{(j)}\}_{t\in\bD,1\leq j\leq m-1}$ of $C$ such that $\Gamma_{t_{m-1}}^{(m-1)}$ is an irreducible rational curve and $X\setminus(\Gamma_{t_{m-1}}^{(m-1)}\setminus\Delta)$ is Brody hyperbolic. Since each $C_i$ intersects with at least four other $C_j's$, we have $m\geq 5$, hence $(-K_X\cdot \Gamma_{t_{m-1}}^{(m-1)})=\sum_{i=1}^{m}(-K_X\cdot C_i)\geq 10$.
 Since $2\sum_{1\leq i<j\leq m}(C_i\cdot C_j)=\sum_{i=1}^{m}\sum_{j\neq i}(C_i\cdot C_j)\geq 4m$, we have
 \begin{align*}
 \#\Sing(\Gamma_{t_{m-1}}^{(m-1)})& =p_a(\Gamma_{t_{m-1}}^{(m-1)})
 =p_a(C) =\#\Sing(C)-(m-1)\\
 &\geq \sum_{1\leq i<j\leq m}(C_i\cdot C_j) -(m-1)
  \geq m+1\geq 6.   
 \end{align*}
 By applying Lemma \ref{hypsmooth} to $C:=\Gamma_{t_{m-1}}^{(m-1)}$, we know that there exists a successive nodal admissible deformation $\{\Gamma_t^{(j)}\}_{t\in\bD,m\leq j\leq m+1}$ of $\Gamma_{t_{m-1}}^{(m-1)}$ such that $\Gamma_{t_{m+1}}^{(m+1)}$ is an irreducible smooth hyperbolic curve and $X\setminus(\Gamma_{t_{m+1}}^{(m+1)}\setminus\Delta)$ is Brody hyperbolic.
 It is clear that $\Gamma_{t_{m+1}}^{(m+1)}$ is numerically equivalent to $C$, hence they are linearly equivalent since $X$ is rational. The lemma is proved by taking $C':=\Gamma_{t_{m+1}}^{(m+1)}$. 
\end{proof}

The following corollary is a generalization of \cite[Theorem 3.1]{zai89} which says that there exists a smooth plane curve of degree $m$ whose complement is Brody hyperbolic for $m\geq 5$.

\begin{cor}\label{zai_p2}
 Let $m\geq 5$ and $d\geq 4$ be integers.
 Then there exists smooth plane curves $C$ and $S$ of degree $m$ and $d$ respectively, such that 
 $(\bP^2, S + C)$ is log smooth
 and $\bP^2\setminus \big(C \setminus S\big)$ is Brody hyperbolic.
\end{cor}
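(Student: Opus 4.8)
The plan is to combine Corollary \ref{huy_p2} with Lemma \ref{zai_gen}, using the auxiliary smooth curve produced by the former as the boundary divisor $\Delta$ in the latter. First I would fix a configuration $\{C_i\}_{i=1}^m$ of $m$ lines in general position in $\bP^2$. Since $m\geq 5$ and $d\geq 4$, Corollary \ref{huy_p2} provides a smooth plane curve $S$ of degree $d$ such that $(\bP^2, S+\sum_{i=1}^m C_i)$ is log smooth and $\bP^2\setminus\big((\cup_{i=1}^m C_i)\setminus S\big)$ is Brody hyperbolic. I then set $\Delta:=S$ and consider the reduced divisor $\sum_{i=1}^m C_i$.

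Next I would verify that $(\bP^2,\Delta)$ and $\sum_{i=1}^m C_i$ satisfy the hypotheses of Lemma \ref{zai_gen}. Indeed $\bP^2$ is rational, $(\bP^2,\Delta)$ is log smooth since $S$ is smooth, and $\sum_{i=1}^m C_i+\Delta$ is normal crossing by the choice of $S$. Each line $C_i$ is an irreducible rational curve, the linear system $|\cO_{\bP^2}(1)|$ is base-point-free, and $(-K_{\bP^2}\cdot C_i)=3\geq 2$. Because the $C_i$ are in general position and $m\geq 5$, each $C_i$ meets the remaining $m-1\geq 4$ lines in $m-1$ distinct points, so it intersects at least four other $C_j$'s. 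As $\bP^2\setminus\big((\cup_i C_i)\setminus\Delta\big)$ is Brody hyperbolic, Lemma \ref{zai_gen} yields an irreducible smooth curve $C'$ linearly equivalent to $\sum_{i=1}^m C_i\sim\cO_{\bP^2}(m)$, hence a smooth plane curve of degree $m$, such that both $C'$ and $\bP^2\setminus(C'\setminus S)$ are Brody hyperbolic. Setting $C:=C'$ gives all the desired conclusions except, a priori, the log smoothness of $(\bP^2,S+C)$.

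For the remaining point I would note that the curve $C'$ arises as the final member $\Gamma_{t_{m+1}}^{(m+1)}$ of the successive nodal admissible deformation built in the proofs of Lemma \ref{zai_gen} and Lemma \ref{hypsmooth}, and the last deformation step there is produced via Bertini's theorem with $(\bP^2,\Gamma_t^{(m+1)}+\Delta)$ log smooth for general $t$; since $C'$ is already smooth, $(\bP^2,S+C')$ is log smooth. (Alternatively, one runs one more admissible deformation of $C'$ inside a general pencil in $|\cO_{\bP^2}(m)|$: the total space of such a pencil is smooth, so this is an admissible deformation of the Brody hyperbolic curve $C'$, a general member is smooth and meets $S$ transversally by Bertini, and hyperbolicity of the complement is preserved by Lemma \ref{zai}.) This finishes the proof. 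There is no genuinely hard step: the statement is a repackaging of the two cited results, and the only thing demanding attention is tracking the log-smoothness of $(\bP^2,S+C)$, which is immediate from the construction but not recorded in the statement of Lemma \ref{zai_gen}.
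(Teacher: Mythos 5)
Your proposal is correct and follows essentially the paper's own argument: take $m$ general lines, invoke Corollary \ref{huy_p2} to produce $S$, and apply Lemma \ref{zai_gen} with $\Delta:=S$, using $(-K_{\bP^2}\cdot C_i)=3\geq 2$ and the fact that each line meets the other $m-1\geq 4$ lines. Your additional remark tracking the log smoothness of $(\bP^2,S+C)$ through the final Bertini step in Lemma \ref{hypsmooth} addresses a detail the paper leaves implicit, and it is handled correctly.
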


\begin{proof}
Let $\{C_i\}_{i=1}^m$ be a set of lines in general position
in $\bP^2$. By Corollary \ref{huy_p2}, there exists
a smooth plane curve $S$ of degree $d$ such that
$(\bP^2,\sum_{i=1}^m C_i +S)$ is log smooth and
$\bP^2\setminus\big((\cup_{i=1}^m C_i)\setminus S\big)$
is Brody hyperbolic. We know that
$(C_i^2)=1$ and each $C_i$ intersects all $C_j$'s whenever
$j\neq i$. Since $m-1\geq 4$, the corollary is proved by applying
Lemma \ref{zai_gen} to $(X,\Delta, C_i):=(\bP^2,S, C_i)$.
\end{proof}

The following corollary is related to \cite[1.2]{it15} where they studied 
hyperbolic imbeddedness of $\bF_0\setminus C$.

\begin{cor}\label{zai_fn}
Let $a,b,c,d$ be integers.
Then there exists smooth curves $C$ and $S$ in $\bF_N$
of bidegree $(a,b)$ and $(c,d)$ respectively,
such that $(\bF_N, S + C)$ is log smooth 
and $\bF_N\setminus\big(C\setminus S\big)$
is Brody hyperbolic if one of the
following is true:
\begin{itemize}
 \item $N=0$ and $a,b,c,d\geq 4$;
 \item $N\geq 1$, $a,c\geq 3$ and $b,d\geq 4$.
\end{itemize}
\end{cor}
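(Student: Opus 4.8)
The plan is to adapt the proof of Corollary \ref{zai_p2}, using the natural ruling and section curves of $\bF_N$ in place of lines in $\bP^2$, and invoking Corollary \ref{huy_fn} in place of Corollary \ref{huy_p2}.

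First I would fix a configuration of rational curves on $\bF_N$ exactly as in Corollary \ref{huy_fn}: let $C_1,\dots,C_a$ be general members of $|F|$ and $C_{a+1},\dots,C_{a+b}$ general members of $|T|$, so that $(\bF_N,\sum_{i=1}^{a+b}C_i)$ is log smooth. The numerical hypotheses on $(N,a,b,c,d)$ in the statement are precisely those of Corollary \ref{huy_fn}, so that corollary yields a smooth curve $S\in|cF+dT|$ --- hence of bidegree $(c,d)$ --- such that $(\bF_N,\,S+\sum_i C_i)$ is log smooth and $\bF_N\setminus\big((\cup_i C_i)\setminus S\big)$ is Brody hyperbolic.

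Next I would check that the triple $(X,\Delta,\{C_i\})=(\bF_N,S,\{C_i\})$ satisfies the hypotheses of Lemma \ref{zai_gen}. Each $C_i$ is an irreducible rational curve (a fibre or a section) lying in the base-point-free system $|F|$ or $|T|$, and $S+\sum_i C_i$ is normal crossing by the previous step. Writing $-K_{\bF_N}\equiv 2T+(2-N)F$ and using $(F^2)=0$, $(F\cdot T)=1$, $(T^2)=N$, one gets $(-K_{\bF_N}\cdot F)=2$ and $(-K_{\bF_N}\cdot T)=N+2\geq 2$, so $(-K_X\cdot C_i)\geq 2$ for every $i$. For the intersection count: a fibre $C_i\in|F|$ is disjoint from the other fibres and meets each of the $b$ sections once, so it meets exactly $b\geq 4$ other curves; a section $C_j\in|T|$ meets each of the $a$ fibres, and when $N\geq 1$ it also meets each of the other $b-1$ sections (since $(T^2)=N>0$), so it meets $a\geq 4$ others when $N=0$ and $a+b-1\geq 6$ others when $N\geq 1$. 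In both cases of the statement every $C_i$ therefore meets at least four others, so Lemma \ref{zai_gen} applies and produces an irreducible smooth curve $C'$ linearly equivalent to $\sum_i C_i$, hence of bidegree $(a,b)$, with $C'$ and $\bF_N\setminus(C'\setminus S)$ both Brody hyperbolic; since $C'$ arises from a nodal deformation and $S$ is smooth, $(\bF_N,S+C')$ is also log smooth. Taking $C:=C'$ completes the proof.

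Because the genuine content is already encapsulated in Corollary \ref{huy_fn} and Lemma \ref{zai_gen}, the only point requiring care is the intersection count above, which must be split into the cases $N=0$ and $N\geq 1$ and uses the sharp lower bounds on $a$ and $b$. This bookkeeping is exactly what forces the two separate sets of numerical hypotheses in the statement --- and in particular explains why, for $\bF_0=\bP^1\times\bP^1$, one needs $a,b\geq 4$ rather than merely $a\geq 3$, since there two sections (resp.\ two fibres) never meet.
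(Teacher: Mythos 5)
Your proposal is correct and follows essentially the same route as the paper: take $a$ general fibres and $b$ general curves in $|T|$, apply Corollary \ref{huy_fn} to get $S$, verify the hypotheses of Lemma \ref{zai_gen} (base-point-freeness, $(-K_{\bF_N}\cdot C_i)\geq 2$, each curve meeting at least four others), and smooth the configuration into $C$. Your explicit intersection and anticanonical-degree computations simply spell out what the paper cites from the proof of Corollary \ref{huy_fn}.
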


\begin{proof}
 Let $\{C_i\}_{i=1}^{a+b}$ be a set of curves in $\bF_N$,
 such that $C_i$ is a general curve in $|F|$ for 
any $i\leq a$, and $C_j$ is a general curve in $|T|$
for any $j>a$. By Corollary \ref{huy_fn}, there exists
a smooth curve $S$ of bidegree $(c,d)$ such that
$(\bF_N,\sum_{i=1}^{a+b}C_i+S)$ is log smooth and
$\bF_N\setminus\big((\cup_{i=1}^{a+b} C_i)\setminus S\big)$
is Brody hyperbolic. We know that
$(C_i^2)= 0$ for each $i\leq a$ and $(C_j^2)=N\geq 0$ 
for each $j>a$. From the proof of Corollary \ref{huy_fn}
we know that $C_i$ intersects with at least
four $C_j$'s for each $1\leq i\leq a+b$.
Hence the corollary is proved by applying
Lemma \ref{zai_gen} to $(X,\Delta, C_i):=(\bF_N,S, C_i)$.
\end{proof}

\section{Proofs}\label{proofs}

\begin{lem}\label{finaldef}
 Let $X$ be a smooth projective surface. Let $L$ be a line
 bundle on $X$. Let $n\geq 2$ be an integer.
 Assume that there exists irreducible divisors
 $C\in |L|$ and $S\in |L^{\otimes n}|$ satisfying that
 $(X,S+C)$ is log smooth, and both $C$ 
 and $X\setminus(C\setminus S)$ are Brody hyperbolic.
 Then there exists a smooth curve $D\in |L^{\otimes n}|$ such
 that the degree $n$ cyclic cover of $X$ branched
 along $D$ is Brody hyperbolic.
\end{lem}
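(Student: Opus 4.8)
The plan is to produce $D$ as a general member of the pencil $\{D_t\}_{t\in\bD}\subset|L^{\otimes n}|$ joining $D_0=nC$ to $D_\infty=S$, and to deduce hyperbolicity of the associated cyclic cover from its degeneration, as $t\to 0$, to the ``$n$-page book'' cover of $nC$. Fix $s\in H^0(X,L)$ and $f\in H^0(X,L^{\otimes n})$ with $C=(s=0)$, $S=(f=0)$, and set $D_t:=(s^n+tf=0)$. By Bertini's theorem $D_t$ is smooth off $C\cap S$ for general $t$; and since $(X,S+C)$ is log smooth, near a point of $C\cap S$ we may choose coordinates $(u,v)$ with $C=(u=0)$, $S=(v=0)$, so that $D_t$ is locally $(u^n+tv\cdot(\mathrm{unit})=0)$, hence smooth there for every $t\neq 0$. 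Thus $D_t$ is a smooth curve in $|L^{\otimes n}|$ for all $t$ in a Zariski-dense subset $T\subseteq\bD\setminus\{0\}$. I then build the family of cyclic covers: let $p\colon\mathbb{L}=\mathrm{Tot}(L)\to X$, let $\tau$ be the tautological section of $p^*L$, and inside $\mathbb{L}\times\bD$ put $\mathcal{Y}:=\{\tau^n=p^*s^n+t\,p^*f\}$. This is finite over $X\times\bD$, hence proper (indeed projective) over $\bD$, with fibre $\mathcal{Y}_t\to X$ the degree-$n$ cyclic cover branched along $D_t$. A Jacobian computation shows $\mathcal{Y}$ is normal --- it is a hypersurface in the smooth fourfold $\mathbb{L}\times\bD$ whose singular set is finite and lies over $C\cap S\subseteq X\times\{0\}$ --- so $\mathcal{Y}$ is smooth along $\mathcal{Y}_0$ away from those finitely many points. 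The central fibre is $\mathcal{Y}_0=\bigcup_{i=0}^{n-1}X_i$, where $X_i:=(\tau=\zeta^i p^*s,\,t=0)$ with $\zeta=e^{2\pi i/n}$ is the graph of the section $\zeta^i s$, so $p|_{X_i}\colon X_i\xrightarrow{\ \sim\ }X$; moreover $X_i\cap X_j=\Sigma$ for all $i\neq j$, where $\Sigma:=(\tau=0)\cap p^{-1}(C)$ is a copy of $C$ common to all pages (the ``spine'').

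Now suppose for contradiction that $\mathcal{Y}_t$ fails to be Brody hyperbolic for all $t\in T$ near $0$; pick $t_k\to0$ in $T$ and non-constant entire curves $\psi_k\colon\bC\to\mathcal{Y}_{t_k}\subset\mathcal{Y}$. Since the $\psi_k$ land in the fibres over the $t_k\to0$, they eventually lie in $\mathcal{Y}$ over a compact sub-disc, which is a compact subset of $\mathcal{Y}$; so by Brody's lemma, after reparametrizing and passing to a subsequence, $\psi_k\to\psi_\infty$ locally uniformly with $\psi_\infty\colon\bC\to\mathcal{Y}_0$ non-constant. By the identity theorem $\psi_\infty(\bC)$ lies in a single page $X_{i_0}$, and $\phi:=p|_{X_{i_0}}\circ\psi_\infty\colon\bC\to X$ is non-constant. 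If $\phi(\bC)\subset C$ then $\phi$ is constant because $C$ is Brody hyperbolic, a contradiction; so $\phi(\bC)\not\subset C$, i.e.\ $\psi_\infty(\bC)\not\subset\Sigma$. I then claim $\phi(\bC)\cap(C\setminus S)=\varnothing$, so that $\phi(\bC)\subset X\setminus(C\setminus S)$; since the latter is Brody hyperbolic, $\phi$ is again constant --- the final contradiction, which proves the lemma upon taking $D:=D_t$ for any $t\in T$ sufficiently close to $0$.

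Proving the claim is the heart of the matter and the only place the degeneration is genuinely used. Suppose $\phi(z_0)=x_0\in C\setminus S$ for some $z_0\in\bC$. Then $\psi_\infty(z_0)$ is the point $(x_0,0)\in\Sigma$, which, since $x_0\notin S$, is a smooth point of $\mathcal{Y}$; hence near it the prime Weil divisor $X_j$ (for any fixed $j\neq i_0$, which exists as $n\geq2$) is Cartier, say $X_j=(g=0)$ for a local holomorphic function $g$. Because $\psi_\infty(\bC)\subset X_{i_0}$ meets $X_j$ only inside $\Sigma$ and is not contained in $\Sigma$, the function $g\circ\psi_\infty$ has an isolated zero at $z_0$; by Hurwitz's theorem --- this is precisely the local content of the stability of intersections (Lemma \ref{stabint}) --- $g\circ\psi_k$ vanishes near $z_0$ for $k\gg0$, i.e.\ $\psi_k(\bC)\cap X_j\neq\varnothing$. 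But $\psi_k(\bC)\subset\mathcal{Y}_{t_k}$ while $X_j\subset\mathcal{Y}_0$, and $\mathcal{Y}_{t_k}\cap\mathcal{Y}_0=\varnothing$ since $t_k\neq0$: contradiction, which establishes the claim.

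The genuinely delicate point in this plan is the local study of the total space $\mathcal{Y}$ along the spine: verifying that $\mathcal{Y}$ is normal, that $\Sing(\mathcal{Y})$ sits over $C\cap S$, and consequently that $\mathcal{Y}$ is smooth --- so that an individual page $X_j$ is Cartier --- exactly at the spine points lying over $C\setminus S$. It is this structure that makes the stability-of-intersections step legitimate, and it is exactly there that the two hyperbolicity hypotheses ($C$ hyperbolic, and $X\setminus(C\setminus S)$ hyperbolic) get used, one for each of the two cases $\phi(\bC)\subset C$ and $\phi(\bC)\not\subset C$.
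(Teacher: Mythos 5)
Your proposal is correct and follows essentially the same route as the paper: degenerate $D$ to $nC$ inside the pencil spanned by $nC$ and $S$, form the family of degree-$n$ cyclic covers whose central fibre is $n$ copies of $X$ glued along a spine isomorphic to $C$, extract a Brody limit of entire curves in nearby fibres, rule out the spine using hyperbolicity of $C$, and use smoothness of the total space away from the points over $C\cap S$ together with Hurwitz/stability of intersections to force the limit curve into a page inside $X\setminus(C\setminus S)$, contradicting its hyperbolicity. The only differences are presentational (an explicit hypersurface model in $\mathrm{Tot}(L)\times\bD$ rather than the cyclic cover of $X\times\bP^1$ branched along the developed pencil, and a direct local Hurwitz argument with a local equation of another page instead of invoking Lemma \ref{stabint} as a black box).
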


\begin{proof}
 
 Let $\{S_t\}_{t\in\bP^1}$ be the linear
 pencil of divisors on $X$ spanned by $S_0:=nC$ and $S_\infty:= S$. Then the development
 of $\{S_t\}$ is an effective Cartier divisor $\cS$ of $X\times \bP^1$.
 Since $S$ and $C$ intersect transversally, it is not hard 
 to check in local charts that $\cS$ is smooth away from the
 finite set $(C\cap S)\times \{0\}$. Let $\pi:\cY\to X\times\bP^1$
 be the degree $n$ cyclic cover of $X\times\bP^1$ branched along $\cS$. Then $\cY$ is smooth
 away from $\pi^{-1}((C\cap S)\times \{0\})$.
 From the construction it is clear that each fiber $Y_t$ of $pr_2\circ\pi:\cY\to\bP^1$
 is a degree $n$ cyclic cover of $X$ branched along $S_t$. 
 Since $S_0=nC$, $Y_0$ is the union of $n$ irreducible 
 components $\{Y_{0,i}\}_{i=1}^n$ such that 
 $Y_{0,i}\cap Y_{0,j}=\pi^{-1}(C\times\{0\})$ for any $i\neq j$,
 and $\pi:(Y_{0,i},\pi^{-1}(C\times\{0\}))\to (X,C)\times \{0\}$ is an isomorphism
 for any $i$.
 
 Assume to the contrary that $Y_{t_n}$ is not Brody hyperbolic for a
 sequence of non-zero complex numbers $(t_n)$ converging to $0$.
 Let $\phi_n:\bC\to Y_{t_n}$ be the sequence of entire
 curves. We may assume that $||\phi_n'(0)||$ tends to infinity after coordinate changes. 
 By Brody Lemma (e.g. \cite{duv}), after choosing a subsequence if necessary, there exists a sequence of reparametrizations $r_n:\bD_{R_n}\to\bD$ where $\lim_{n\to\infty} R_n=+\infty$ such that  $(\phi_n\circ r_n)$ converges to  an entire curve $\phi_\infty:\bC\to Y_0$ as $n\to\infty$.
 Notice that $Y_{0,i}$ is Cartier
 away from  $\pi^{-1}((C\cap S)\times \{0\})$, so Lemma \ref{stabint} implies that $\phi_\infty(\bC)$ is contained in at least one 
 of the $(n+1)$ subsets $\{Y_0^{(i)}\}_{i=0}^n$ of $Y_0$, where
 \begin{align*}
  Y_0^{(0)}&:=\pi^{-1}(C\times\{0\}),\\
  Y_0^{(i)}&:=Y_{0,i}\setminus \pi^{-1}((C\setminus S)\times\{0\})\quad
  \textrm{ for any }1\leq i\leq n.
 \end{align*}
 In particular, at least one of the subsets
 $\{Y_0^{(i)}\}_{i=0}^n$ is not Brody hyperbolic.
Under the projection $\pi$, it is not hard to see that
$Y_0^{(0)}\cong C$ and
$(Y_{0,i}, Y_0^{(i)})\cong (X, X\setminus (C\setminus S))$ 
for any $1\leq i\leq n$.
Thus $Y_0^{(i)}$ is Brody hyperbolic for any $0\leq i\leq n$,
we get a contradiction.

As a result, $Y_t$ is Brody hyperbolic for any $t\neq 0$ sufficiently small.
Since $S_t$ is smooth for general $t$, the lemma is proved by choosing
$D:=S_t$ for $t\neq 0$ sufficiently small.
\end{proof}

\begin{emp}[\textit{Proof of Theorem \ref{main}}] 
Let $\pi: Y\to \bP^2$ be the double cover of $\bP^2$ branched
along $D$.

For the ``only if'' part, if $d\leq 4$ then $Y$ is a
rational surface; if $d=6$ then $Y$ is a K3 surface. In both cases $Y$ is not Brody hyperbolic.
If $d=8$, since Brody hyperbolicity is preserved under small deformation,
we may deform $D$ a bit to ensure that there exists a 
bitangent line $\ell$ of $D$ that meets $D$ transversally in four further points. Hence by Riemann-Hurwitz formula, $\pi^{-1}(\ell)$ is an elliptic curve. Thus $Y$ is never Brody hyperbolic when $d\leq 8$.

For the ``if'' part, Corollary \ref{zai_p2} implies that there exist
plane curves $C$ and $S$ of degree $d/2$ and $d$ respectively,
such that $(\bP^2, C+S)$ is log smooth and $\bP^2
\setminus(C\setminus S)$
is Brody hyperbolic. Since $d/2\geq 5$, $C$ is a smooth curve
of genus at least $6$, so it is Brody hyperbolic.
Thus applying Lemma \ref{finaldef} to $(X,L,n,C,S)
:=(\bP^2, \cO(d/2), 2,C, S)$
finishes the proof.
\qed
\end{emp}

The following theorem is an application of Corollary \ref{zai_p2}
and Lemma \ref{finaldef}. It also improves 
\cite[Theorem 25]{liu16}.

\begin{thm}\label{cycsurf}
 Let $d\geq 10$ be a composite number. Then there exists
 a smooth Brody hyperbolic surface of degree $d$ in $\bP^3$
 that is a cyclic cover of $\bP^2$ under some linear projection.
\end{thm}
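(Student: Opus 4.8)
The plan is to run the argument behind the proof of Theorem~\ref{main}, but with a cyclic cover of the right degree in place of the double cover, and then to realize the resulting cyclic cover of $\bP^2$ as a smooth hypersurface $Y\subset\bP^3$ cut out by an equation of the form $x_3^{d}=F(x_0,x_1,x_2)$. The starting point is the elementary observation that, since $d\ge 10$ is composite, it admits a factorization $d=n\ell$ with integers $n\ge 2$ and $\ell\ge 5$. Indeed, take $n$ to be the smallest prime factor of $d$ and $\ell=d/n$: if $n=2$ then $\ell=d/2\ge 5$; if $n\ge 3$ then $d$ is an odd composite integer with smallest prime factor $n$, so $d\ge 15$ when $n=3$ and $d\ge n^2$ when $n\ge 5$, and in every case $\ell=d/n\ge 5$. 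Fix such a factorization and set $L:=\cO_{\bP^2}(\ell)$, so that $L^{\otimes n}=\cO_{\bP^2}(d)$.

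Next I would invoke the earlier results. Since $\ell\ge 5$ and $d\ge 4$, Corollary~\ref{zai_p2} (applied with its ``$m$'' equal to $\ell$ and its ``$d$'' equal to our $d$) produces smooth plane curves $C$ of degree $\ell$ and $S$ of degree $d$ such that $(\bP^2,S+C)$ is log smooth and $\bP^2\setminus(C\setminus S)$ is Brody hyperbolic; moreover $C$, being a smooth plane curve of degree $\ell\ge 5$, has genus $\binom{\ell-1}{2}\ge 6$ and hence is Brody hyperbolic. Thus the hypotheses of Lemma~\ref{finaldef} are satisfied with $(X,L,n,C,S)=(\bP^2,\cO_{\bP^2}(\ell),n,C,S)$, and that lemma produces a smooth curve $D=\{F=0\}\subset\bP^2$ of degree $d$ such that the degree-$n$ cyclic cover $Z\to\bP^2$ branched along $D$ is Brody hyperbolic. (For $n=2$ this is precisely the construction used to prove Theorem~\ref{main}.)

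Finally I would exhibit the surface. Let $Y:=\{x_3^{d}=F(x_0,x_1,x_2)\}\subset\bP^3$, a hypersurface of degree $d$. One checks that $[0{:}0{:}0{:}1]\notin Y$ and, by the Jacobian criterion, that any singular point of $Y$ must satisfy $x_3=0$ and project to a singular point of $D$; since $D$ is smooth, $Y$ is smooth. The linear projection of $\bP^3$ away from $[0{:}0{:}0{:}1]$ restricts to a finite morphism $Y\to\bP^2$ of degree $d$, and this is the degree-$d$ cyclic cover of $\bP^2$ branched along $D$, with Galois group $\mu_d$ acting by $x_3\mapsto\zeta x_3$; in particular $Y$ is a cyclic cover of $\bP^2$ under a linear projection. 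Since $n\mid d$, passing to the quotient by the subgroup $\mu_{d/n}\subset\mu_d$ gives a finite surjective morphism $\rho\colon Y\to Z$ over $\bP^2$ (on the level of coordinates, $\rho$ sends the point with fibre coordinate $x_3$ to the point of $Z$ with cyclic-cover coordinate $x_3^{d/n}$). If $f\colon\bC\to Y$ were a nonconstant entire curve, then $\rho\circ f\colon\bC\to Z$ would again be nonconstant --- otherwise $f(\bC)$ would lie in a finite fibre of $\rho$, forcing $f$ to be constant --- contradicting the Brody hyperbolicity of $Z$. Hence $Y$ is Brody hyperbolic, which proves the theorem.

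I do not expect a deep obstacle here: the argument is an assembly of Corollary~\ref{zai_p2}, Lemma~\ref{finaldef} and the finiteness of $\rho$. The two steps that require a little care are the bookkeeping showing that the factorization $d=n\ell$ can always be arranged with $\ell\ge 5$ --- this is where both ``composite'' and ``$d\ge 10$'' enter, and it is exactly what licenses Corollary~\ref{zai_p2}, which needs a curve of degree at least $5$ --- and the verification that $Y$ is genuinely the degree-$d$ cyclic cover of $\bP^2$ branched along $D$, so that it admits the finite map $\rho$ to the known hyperbolic cover $Z$.
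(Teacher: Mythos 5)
Your proposal is correct and follows essentially the same route as the paper: factor $d=n\ell$ with $n\ge 2$, $\ell\ge 5$, apply Corollary \ref{zai_p2} and Lemma \ref{finaldef} to get a hyperbolic degree-$n$ cyclic cover branched along a smooth degree-$d$ curve $D$, and then deduce hyperbolicity of the degree-$d$ cyclic cover (the hypersurface $x_3^d=F$) via the finite surjection onto the degree-$n$ cover. The only difference is that you spell out details the paper leaves implicit (the existence of the factorization with $\ell\ge 5$, smoothness of the hypersurface, and the explicit quotient map), which is fine.
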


\begin{proof}
 By assumption, $d=d_1 d_2$ for some integers $d_1\geq 2$,
 $d_2\geq 5$. Corollary \ref{zai_p2} implies that
 there exist plane curves $C$ and $S$ of degree $d_2$ and $d$
 respectively, such that $(\bP^2, C+S)$ is log 
 smooth and $\bP^2\setminus(C\setminus S)$
 is Brody hyperbolic. Since $d_2\geq 5$, $C$ is a smooth curve
of genus at least $6$, so it is Brody hyperbolic.
 Applying Lemma \ref{finaldef} to $(X,L,n,C,S):=(\bP^2,\cO(d_2),
 d_1,C,S)$ yields that there exists a smooth plane curve $D$
 of degree $d$ such that the degree $d_1$ cyclic cover $Y$
 of $\bP^2$ branched along $D$ is Brody hyperbolic.
 Let $W$ be the degree $d$ cyclic cover of $\bP^2$ branched 
 along $D$, then there is a natural finite surjective morphism 
 $W\to Y$. Since $Y$ is Brody hyperbolic, $W$ is also Brody hyperbolic.
\end{proof}

\begin{emp}[\textit{Proof of Theorem \ref{main2}}] 
Let $\pi: Y\to \bF_N$ be the double cover of $\bF_N$ branched
along $D$.

For the ``only if'' part, assume to the contrary that $b\leq 6$, then $(D\cdot F)=b\leq 6$. Since $\dim |F|=1$, there exists a curve $F_0\in |F|$ such that $F_0$ is tangent to $D$ at some point. As a result, $\pi^{-1}(F_0)$ is a double cover of $\bP^1$ branched along a non-reduced divisor of degree $\leq 6$. This implies that each irreducible component of $\pi^{-1}(F_0)$ is either a rational curve or an elliptic curve, so $Y$ is not Brody hyperbolic. We get a contradiction. Hence we must have $b\geq 8$. If $N=0$, then $a\geq 8$ by the symmetry between $F$ and $T$. If $N\geq 1$, assume to the contrary that $a\leq 4$, then $(D\cdot (T-NF))=a\leq 4$. Let $T'\subset\bF_N$ be the unique curve with negative self-intersection number, then $T'\sim T-NF$. Hence $(D\cdot T')\leq 4$. This implies that each irreducible component of $\pi^{-1}(T')$ is either a rational curve or an elliptic curve, so $Y$ is not Brody hyperbolic. We get a contradiction. Therefore, the proof of the ``only if'' part is completed.

For the ``if'' part, Corollary \ref{zai_fn} implies that there exist
plane curves $C$ and $S$ of bidegree $(a/2,b/2)$ and $(a,b)$ respectively,
such that $(\bF_N, C+S)$ is log smooth and $\bF_N\setminus(C\setminus S)$
is Brody hyperbolic. If $N=0$, then $a,b\geq 8$ implies that $C$ is a smooth curve of genus at least $9$; if $N\geq 1$, then $a\geq 6$ and $b\geq 8$ implies that $C$ is a smooth curve of genus at least $6N+6$. So $C$ is Brody hyperbolic for every $N\geq 0$.
Thus applying Lemma \ref{finaldef} to $(X,L,n,C,S)
:=(\bF_N, \cO_{\bF_N}((a/2)F+(b/2)T), 2, C, S)$
finishes the proof.
\qed
\end{emp}

\begin{rem}\label{finalrmk}
\begin{enumerate}[label=(\alph*)]
    \item According to \cite{hor76}, the canonical model of a Horikawa surface with even $c_1^2$ is either a double cover of $\bP^2$ branched along a degree $8$ or $10$ curve, or a minimal resolution of a double cover of $\bF_N$ branched along a bidegree $(a,6)$ curve where $a$ has finite choices depending on $N$. Hence the ``only if'' parts of Theorem \ref{main} and \ref{main2} imply that a Brody hyperbolic Horikawa surface with even $c_1^2$ has to be a double cover of $\bP^2$ branched along a degree $10$ curve (in fact  one only needs to check algebraic hyperbolicity). However, our deformation method cannot be applied to exhibit other Brody quasi-hyperbolic Horikawa surfaces (i.e. satisfying the Green-Griffiths-Lang conjecture).
    \item Smooth quintic surfaces in $\bP^3$ are natural examples of Horikawa surfaces with odd $c_1^2$. It was shown by Xu \cite{xu94} that a very general quintic surface does not contain any rational or elliptic curve. However, no examples of Brody hyperbolic (even algebraic hyperbolic) quintic surfaces are known so far.
    Notice that the case of a (very) general quintic surface in $\bP^3$ corresponds to the case $d=2n-1$ in the Kobayashi Conjecture (cf. \cite{kob70, kob98}).
    \item Since Brody hyperbolicity is open in the Euclidean topology (see e.g. \cite[3.11.1]{kob98}), Theorem \ref{main} and \ref{main2} imply that there exist non-empty open subsets of certain moduli spaces of double covers of $\bP^2$ or $\bF_N$ that parametrize Brody hyperbolic ones. Besides, we know that Brody hyperbolicity implies algebraic hyperbolicity, and algebraic hyperbolicity is a very generic property in families. Hence Theorem \ref{main} gives an alternative proof of \cite[Theorem 3.2]{rr13}.
    \end{enumerate} 
\end{rem}

\end{document}